\theoremstyle{plain}
\newtheorem{teor}{Theorem}[section]
\newtheorem*{st}{Straightening Theorem}
\newtheorem*{mrmt}{Measurable Riemann Mapping Theorem}
\newtheorem{claim}{Claim}[section]
\newtheorem{prop}[teor]{Proposition}
\newtheorem{lemma}{Lemma}[section]
\newtheorem{remarks}{Remarks}[section]
\newtheorem{remark}{Remark}[section]
\newtheorem*{notation}{Notation}
\theoremstyle{definition}
\newtheorem{defi}[teor]{Definition}
\def\C{\mathbb{C}}
\def\S{\mathbb{S}}
\def\D{\mathbb{D}}
\def\R{\mathbb{R}}
\def\Z{\mathbb{Z}}
\def\H{\mathbb{H}}
\def\l0{_{\lambda_0}}
\def\i0{_{\iota_0}}
\title{ Parabolic-like mappings}
\author{Luna Lomonaco}
\begin{document}

\maketitle

\begin{abstract}
In this paper we introduce the notion of \textit{parabolic-like mapping}. Such an object is similar to
a polynomial-like mapping, but it has a parabolic external class, \textit{i.e.} an external map with a parabolic fixed
point.
We define the notion of parabolic-like mapping and we study the dynamical properties
of parabolic-like mappings.
We prove a Straightening Theorem for parabolic-like mappings which states that
any parabolic-like mapping of degree $2$
is hybrid conjugate to a member of the family
$$Per_1(1)= \left\{[P_A] \,| \, P_A(z)=z+ \frac{1}{z}+ A,\,\,A \in \C\right\},$$
a unique such member if the filled Julia set is connected.
\end{abstract}
\section{Introduction}

A \textit{polynomial-like map} of degree $d$ is a triple ($f,U',U$) where $U'$ and $U$ are open subsets of $\C$ isomorphic to discs,
$U'$ is compactly contained in $U$, and $f:U' \rightarrow U$ is a proper degree $d$ holomorphic map
(see \cite{DH}). A degree $d$ polynomial-like map is determined up to holomorphic conjugacy by its internal and external
classes, that is, the (conjugacy classes of the) maps which encode the dynamics of the polynomial-like map on the filled
Julia set and its complement. In particular the external class consists of degree
$d$ real-analytic orientation preserving and strictly expanding self-coverings of
the unit circle. The definition of a polynomial-like map captures the behaviour of a 
polynomial in a neighbourhood of its filled Julia set. By changing the external class
of a degree $d$ polynomial-like map with 
the external class of a degree $d$ polynomial (see \cite{DH}),
a degree $d$ polynomial-like map can be straightened to
a polynomial of the same degree.

In this paper we introduce a new object, a \textit{parabolic-like mapping}, similar to but different from a polynomial-like mapping.
The similarity resides in the fact that a parabolic-like map is a local concept, it is characterized by a filled Julia set and an external map, and
the external map of a degree $d$ parabolic-like mapping 
is a degree
$d$ real-analytic orientation preserving self-covering of
the unit circle. The difference resides in the fact that a parabolic-like map has a parabolic fixed point
with an attracting petal outside the filled Julia set, and the external map of a parabolic-like mapping has a parabolic fixed point.

The aim of this paper is to extend the theory of polynomial-like mappings (in the dynamical plane)
to parabolic-like mappings. 
Let us give an example which illustrates the class of maps we are considering.
The map $f_1(z)=z^2 + 1/4$ has a parabolic fixed point at $z=1/2$. Since
the parabolic basin of attraction of the parabolic fixed point resides in the interior of the
filled Julia set, while the repelling direction resides on the Julia set and outside of it, the external map
of $f_1(z)$
is hyperbolic. The map $f_1(z)$ presents polynomial-like restrictions.
On the other hand, let us interchange the roles of the filled Julia set and the closure of the
basin of attraction of infinity for $f_1$.
In other words, let us conjugate $f_1(z)$ by $\iota(z)= 1/z$ and obtain the map
$f_2(z)=\frac{4z^2}{4+z^2}$, and let us
define as filled Julia set for $f_2$ the closure of the
basin of attraction of the superattracting fixed point $z=0$. The basin of attraction of the parabolic fixed point $z=2$
now resides outside the filled Julia set, and gives rise to an the external class with a parabolic fixed point.
Appropriate restrictions of the map $f_2$ belong to the class of parabolic-like mappings.

As polynomial-like mappings are straightened to polynomials, we straighten degree $2$
parabolic-like mappings to members of a model family of maps with a parabolic external class.
We take as model family the family of quadratic rational maps with a parabolic fixed point of
multiplier $1$, normalized by fixing the parabolic fixed point to be infinity and the critical points to be $1$ and $-1$, this is
$$Per_1(1)=\{[P_A] \,| \, P_A(z)=z+ 1/z+ A,\,\,A \in \C\}.$$

\begin{figure}
  \centering
  \begin{minipage}{.4\textwidth}
  \centering
  \includegraphics[height= 5.5cm]{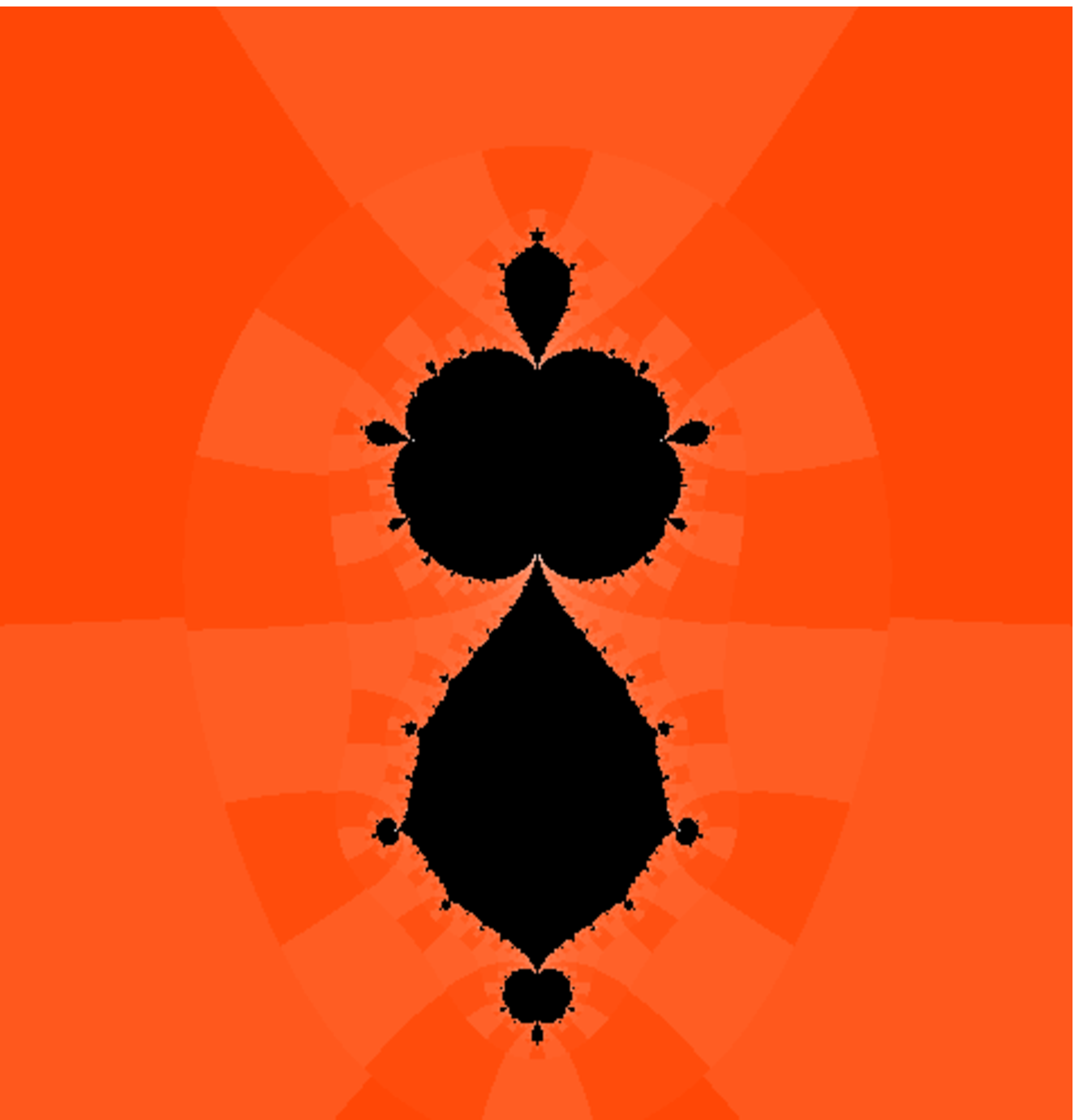}
  \captionof{figure}{\small Julia set of the map $C_{a}(z)=z^3+az^2+z$, $a=i$.}
  \label{Ca}
  \end{minipage} 
  \hspace{1.5cm}
  \begin{minipage}{.4\textwidth}
  \centering
  \includegraphics[height= 5.5cm]{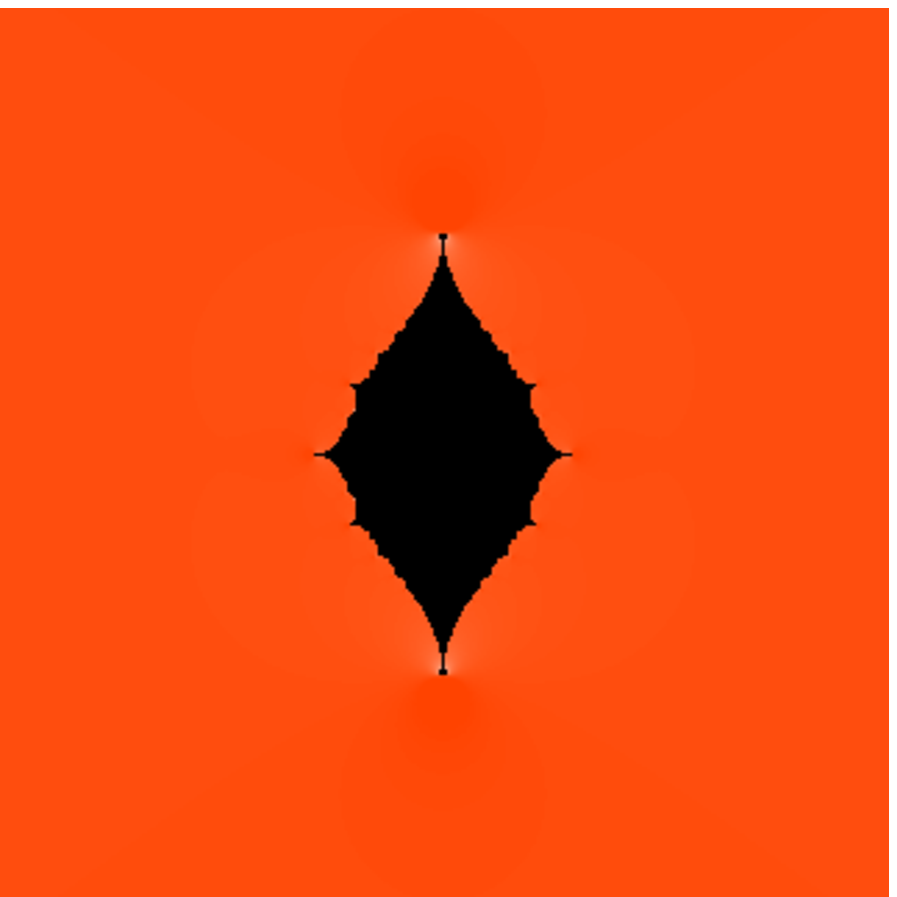}
  \captionof{figure}{\small Julia set of the map $P_1(z)=z+1/z+A$, $A=1$.}
  \label{M1}
  \end{minipage}
\end{figure}
All the maps in $Per_1(1)$ have a completely invariant Fatou component $\Lambda$, namely the parabolic basin of attraction of infinity.
We define the filled Julia set for these maps as 
$$K_A= \widehat \C \setminus \Lambda$$ (note that for every $A \neq 0$, $P_A$ has a unique
completely invariant Fatou component $\Lambda$,
hence $K_A$ is well defined, while for the map $P_0(z)=z+1/z$ we need to make a choice,
after which the filled Julia set $K_0$ is well defined).
The external class of this family is parabolic, and we prove in Proposition
\ref{extmap} that it is given by the class of $h_2(z)=\frac{3z^2+1}{3+z^2}$.

In this paper we will first define parabolic-like maps and the filled Julia set of a parabolic-like map.
Then we will construct and discuss the external class
in this setting. Finally, we will prove that we can straighten every degree $2$ parabolic-like map to a member of the family $Per_1(1)$,
by replacing the external map of the parabolic-like map by $h_2$ (see Fig. \ref{Ca} and \ref{M1}).\\

The author would like to thank her advisor, Carsten Lunde Petersen, for suggesting the idea of parabolic-like mapping,
and
for his help, support and encouragement. This paper was written during the author's Ph.d. Hence the
author would like to thank Roskilde University and Universit\'e Paul Sabatier for their hospitality, and Roskilde
University, the ANR-08-JCJC-0002 founded by the Agence Nationale de la Recherche and the Marie Curie RTN 035651-CODY for
their financial support during her Ph.d. 
\section{Preliminaries}\label{pre}
In this paper we are studying restrictions of maps with a parabolic fixed point of multiplier $1$.
By a change of coordinates we can consider the parabolic fixed point to be at $z=0$, hence we will
consider maps of the form $$f(z)=z(1+az^n+...),\,\,\,n \geq 1,\,\,a \neq 0.$$
The integer $n$ is the
\textit{degeneracy/parabolic multiplicity} of the parabolic fixed point. In a neighborhood of
a parabolic fixed point of parabolic multiplicity $n$,
there are $n$ attracting petals,
which alternate with $n$ repelling petals (for the definition of petal see \cite{Sh} or \cite{M}).
We will denote the petals by $\Xi$.
On each petal $\Xi$ there exists a conformal map which conjugates the map $f$ to a translation (see \cite{Sh} or \cite{M}).
This map is called a \textit{Fatou coordinate} for the petal $\Xi$, and it is unique up to composition with a translation.
We will denote Fatou coordinates by $\phi$.
Often it is convenient to consider the quotient of a petal $\Xi$ under the equivalence relation identifying $z$
and $f(z)$ if both $z$ and $f(z)$ belong to $\Xi$. This quotient manifold is called the \textit{\'Ecalle
cilinder}, and it is conformally isomorphic to the infinite cylinder $\C / \Z$ (see \cite{Sh} and
\cite{M}). \\

The Straightening Theorem is obtained by surgery, applying the Measurable Riemann Mapping Theorem (stated below). For
a proof of the Measurable Riemann Mapping Theorem and the notion
of almost complex structure, quasiconformal mappings and quasisymmetric mappings, the reader
is referred to \cite{Ah} or, for a modern treatment, to \cite{Hu}.
\begin{mrmt}
 Let $\sigma$ be a bounded almost complex structure on a domain $U\subset \C$.
 Then there exists a quasiconformal homeomorphism $\varphi: U \rightarrow \C$ such that
 $$\sigma= \varphi^* \sigma_0.$$
\end{mrmt}
\begin{notation}
 We will use the following notation:
 $$ \H_l=\{z \in \C | \mbox{ Re}(z)< 0 \},$$
 $$ \H_r=\{z \in \C | \mbox{ Re}(z)> 0 \}.$$
\end{notation}

\section{Definitions and statement of \\the Straightening Theorem}\label{de}
A parabolic-like map is an object introduced to extend the notion of polynomial-like maps to maps with a parabolic external map.
The domain of a parabolic-like map is not 
contained in the range, and the set of points with infinite
forward orbit is
not contained in the intersection of the domain and the range. This calls for a partition of the set of points with infinite
forward orbit
into a filled
Julia set compactly contained
in both domain and range and exterior attracting petals.
\begin{figure}[hbt!]
\centering
\psfrag{A}{$\Omega$}
\psfrag{A'}{$\Delta'$}
\psfrag{gamma+}{$\gamma_+$}
\psfrag{gamma-}{$\gamma_-$}
\psfrag{AA}{$\Omega'$}
\psfrag{d}{$f:U' \stackrel{d:1}{\rightarrow}U$}
\psfrag{1}{$f:\Delta' \stackrel{1:1}{\rightarrow} \Delta$}
\psfrag{D}{$\Delta$}
\includegraphics[width= 13cm]{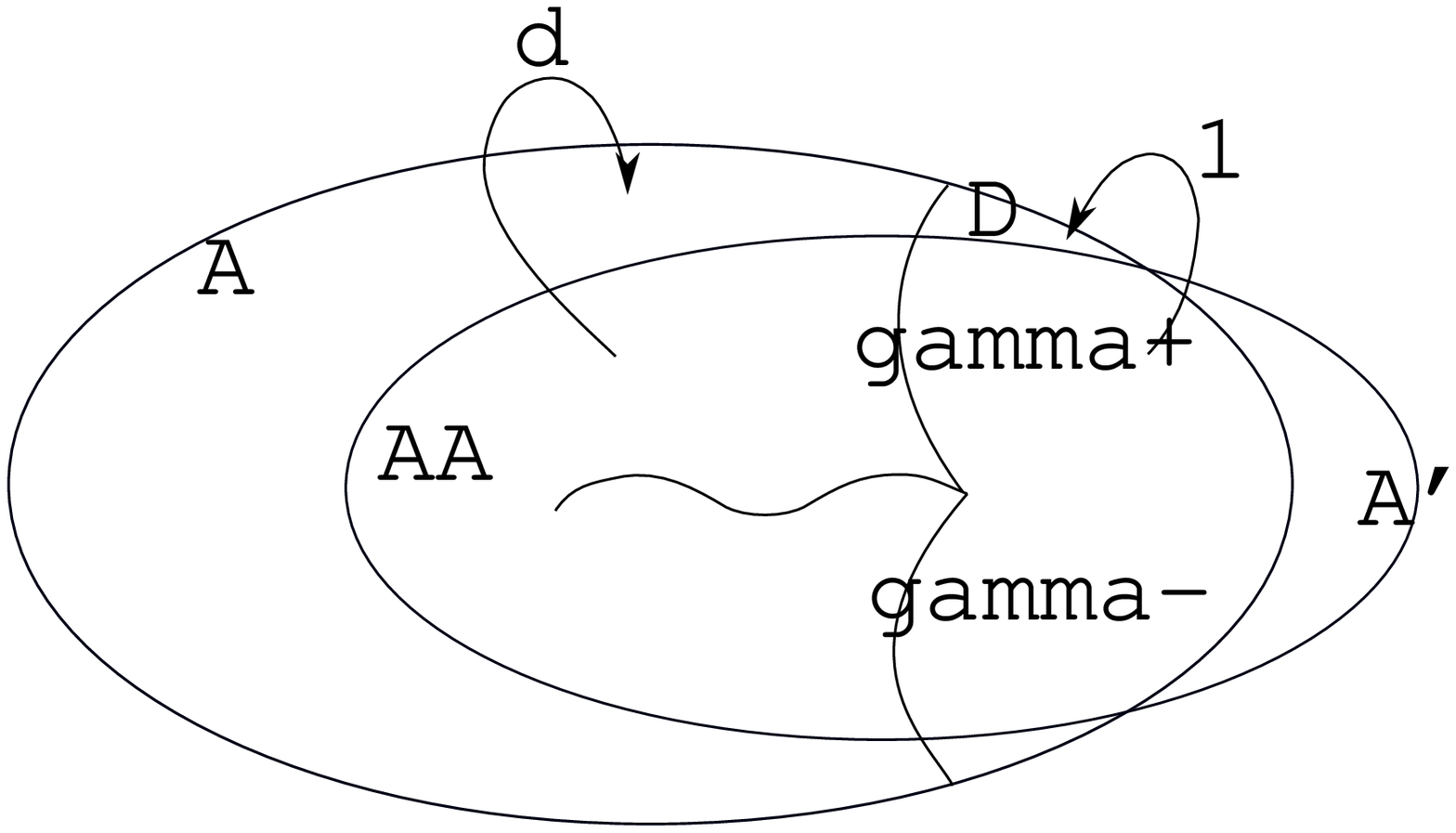}
\caption{\small On a parabolic-like map ($f,U',U,\gamma$) the arc $\gamma$ divides $U'$ and $U$
into $\Omega', \Delta'$
 and $\Omega, \Delta$ respectively. These sets are such that $\Omega'$ is compactly contained in $U$, $\Omega'\subset
\Omega$, $f:\Delta' \rightarrow \Delta$ is an isomorphism and $\Delta'$ contains at least one attracting fixed petal of the parabolic
fixed point.}
\label{AAA}
\end{figure}

\begin{defi}\label{definitionparlikemap} \textbf{(Parabolic-like maps)\,\,\,}
A \textit{parabolic-like map} of degree $d\geq2$ is a 4-tuple ($f,U',U,\gamma$) where 
\begin{itemize}
	\item $U'$ and $U$ are open subsets of $\C$, with $U',\,\, U$ and $U \cup U'$ isomorphic to a disc, and $U'$ not
contained in $U$,
	\item $f:U' \rightarrow U$ is a proper holomorphic map of degree $d\geq 2$ with a parabolic fixed point at $z=z_0$ of
 multiplier 1,
	\item $\gamma:[-1,1] \rightarrow \overline {U}$ is an arc with $\gamma(0)=z_0$, forward invariant under $f$, $C^1$
on $[-1,0]$ and on $[0,1]$, and such
that
$$f(\gamma(t))=\gamma(dt),\,\,\, \forall -\frac{1}{d} \leq t \leq \frac{1}{d},$$
$$\gamma([ \frac{1}{d}, 1)\cup (-1, -\frac{1}{d}]) \subseteq U \setminus U',\,\,\,\,\,\,\gamma(\pm 1) \in \partial U.$$
It resides in repelling petal(s) of $z_0$ and it divides $U'$ and $U$ into $\Omega', \Delta'$ and $\Omega, \Delta$
respectively, such that $\Omega' \subset \subset U$ 
(and $\Omega' \subset \Omega$), $f:\Delta' \rightarrow \Delta$ is an isomorphism (see Fig. \ref{AAA}) and
$\Delta'$ contains at least one attracting fixed petal of $z_0$. We call the arc $\gamma$ a \textit{dividing arc}.

\end{itemize}

\end{defi}
\begin{notation}\label{gamma}
We can consider $\gamma=\gamma_+\cup \gamma_-$, where $\gamma_{+}:[0,1]\rightarrow
\overline {U}, \,\,\,\gamma_{-}:[0,-1]\rightarrow \overline {U}, \,\,\,
\gamma_{\pm}(0)=z_0$. Where it will be convenient (e.g. in the examples) we will refer to $\gamma_{\pm}$ instead of
$\gamma$.
\end{notation}

\subsubsection*{Examples}\label{ex}
\begin{enumerate}
	\item Consider the function $h_2(z)=\frac{3z^2+1}{3+z^2}$. This map has critical points at $z=0$ and at $\infty$,
	and a parabolic fixed point at $z=1$ of multiplier $1$ and
parabolic multiplicity $2$. The attracting directions of the parabolic fixed point are along the real axis,
while the repelling ones are
perpendicular to the real axis. The repelling petals $\Xi_{+}$ and $\Xi_-$ intersect the unit circle and can be taken to be
reflection symmetric around the unit circle, since $h_2$ is autoconjugate by the reflection $T(z)=\frac{1}{\bar{z}}$.
Let
$\phi_{\pm}: \Xi_{\pm} \rightarrow \H_l  $ be Fatou coordinates with axis tangent to the unit circle at the parabolic
fixed point.
The image of the unit circle in the Fatou coordinate planes are horizontal lines,
which we can suppose coincide with $\R_-$, possibly changing the normalizations
of $\phi_{\pm}$.
Choose $\epsilon >0$ and
define
$U'=\{ z : |z|< 1 + \epsilon \}$, and $U=h_2(U')$.
Let $z_{\pm}$ be intersection points of $\Xi_{\pm}$ respectively and $\partial U$. Thus
$\phi_{+}(z_{+})= m_{+}$ with $Im(m_+)<0$, and $\phi_{-}(z_{-})= m_{-}$ with $Im(m_-) >0$.
Define the dividing arcs as:
$$\gamma_{+}: [0,1] \rightarrow U\,\,\,\,\,\,\,\,\,\,\,\,\,\,\,\,\,\,\,\,\,\,\,\,\,\,\,\,\,\,\,\gamma_{-}: [0,-1] \rightarrow U$$
$$t \rightarrow \phi_{+}^{-1}(log_d(t)+ m_+),\,\,\,\,\,\,\,\,\,\,\,\,\,t \rightarrow \phi_{-}^{-1}(log_d(-t)+m_-).$$ 
Then ($h_2,U',U,\gamma$) is a parabolic-like map of degree $2$.

\begin{figure}[hbt!]
\centering
\psfrag{gp}{$\gamma_+$}
\psfrag{g-}{$\gamma_-$}
\psfrag{U}{$U$}
\psfrag{U'}{$U'$}
\psfrag{O}{$\Omega$}
\psfrag{O'}{$\Omega'$}
\psfrag{oa}{$\mathcal{A}(0)$}
\psfrag{-a}{$c$}
\psfrag{-a/2}{$0$}
\psfrag{a}{$s$}
\psfrag{f-1w}{$\varphi^{-1}(w)$}
\psfrag{f-1wb}{$\varphi^{-1}(\overline{w})$}
\psfrag{fi}{$\varphi$}
\psfrag{f-a}{$\varphi(c)$}
\psfrag{1/3}{$1/3$}
\psfrag{w}{$w$}
\psfrag{wb}{$\overline{w}$}
\psfrag{fa}{$\varphi(0)$}
\includegraphics[width= 12cm]{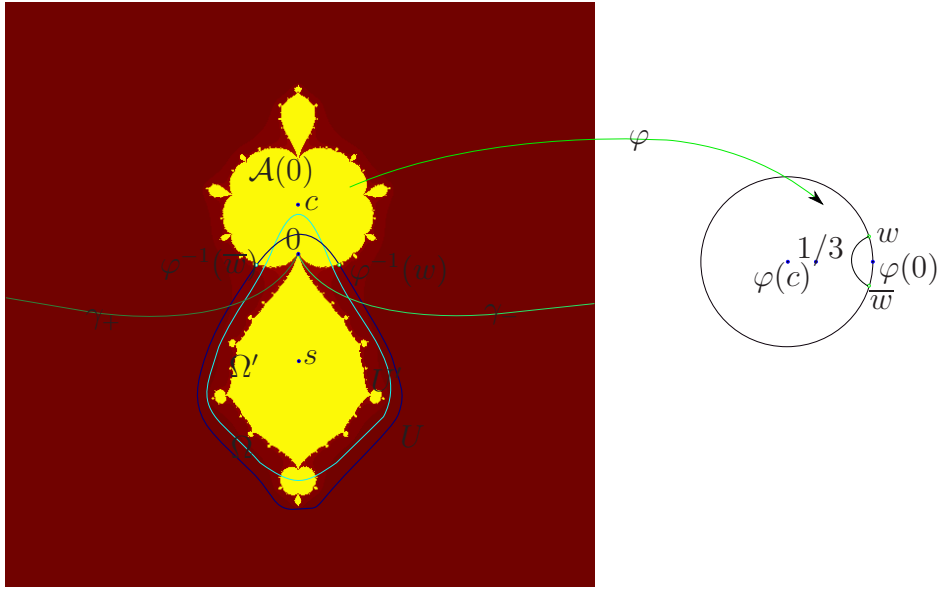}
\caption{\small Construction of a degree $2$ parabolic-like map from the map\\ $(C_a(z)=z+az^2+z^3)$, for $a=i$. The superattracting
fixed point $z=\frac{-a - \sqrt{a^2 -3}}{3} $ is denoted by $s$, and the critical point $z=\frac{-a + \sqrt{a^2-3}}{3}$ in the basin of attraction of the parabolic
fixed point is denoted by $c$.}
\label{cubicbcnnew}
\end{figure}
	\item Let $(C_a(z)=z+az^2+z^3)$, for $a=i$. This map has a superattracting fixed point $s$ at $z=\frac{-a - \sqrt{a^2 -3}}{3} $,
	a critical point $c$ at $z=\frac{-a + \sqrt{a^2-3}}{3}$
and a parabolic fixed point at $z=0$ with multiplier and parabolic multiplicity $1$. Call
$\mathcal{A}(0)$
the immediate basin of attraction of the parabolic fixed point. Then the critical point $c$ belongs to $\mathcal{A}(0)$.
Let $\varphi : \mathcal{A}(0) \rightarrow \D$ be
the Riemann map normalized by setting $\varphi(c)=0$ and $\varphi(z)\stackrel{z\rightarrow 0}\longrightarrow 1$, and let
$\psi:\D \rightarrow \mathcal{A}(0)$ be its
inverse. By the Carathéodory Theorem the map $\psi$ extends
continuously to $\S^1$. Note that $\varphi  \circ f \circ
\psi=h_2$. Let $w$ be an $h_2$ periodic point in the first
quadrant, such that the hyperbolic geodesic $\widetilde{\gamma} \in \D$ connecting $w$ and $\overline{w}$ separates the
critical value $z=1/3$ from the parabolic fixed point $z=1$. Let $U$ be the Jordan domain bounded by $\widehat{\gamma}=
\psi(\widetilde{\gamma})$, union the arcs up to potential level $1$ of the external rays landing at $\psi(w)$ and
$\psi(\overline{w})$, together with the arc of the level $1$ equipotential connecting this two rays around $s$ (see
Fig. \ref{cubicbcnnew}). Let
$U'$ be the connected component of $f^{-1}(U)$ containing $0$ and the dividing arcs $\gamma_{\pm}$ be the fixed external rays landing at the
parabolic
fixed point
$0$ and parametrized by potential. Then  ($f,U',U,\gamma$) is a parabolic-like map of degree $2$ (see Fig.
\ref{cubicbcnnew}).

	\item Let $f(z)=z^2+c$, for $c=(-1+3\sqrt{3}i)/8$ (fat rabbit). Its third iterate $f^3$ has a
parabolic fixed point at $z=(-1+\sqrt{3}i)/4$ of multiplier $1$ and parabolic multiplicity $3$.
Let $\mathcal{A}_0$ be the component of the
immediate basin of attraction of the 
parabolic fixed point containing $z=0$. Number the connected components of the immediate attracting basin in the dynamical order (which
here is the counterclockwise direction around $a$). Let $\varphi :\mathcal{A}_0 \rightarrow \D$ be the Riemann map, normalized by
$\varphi(0)=0$ and $\varphi(z)\stackrel{z\rightarrow a}\longrightarrow 1$, and let $\psi:\D \rightarrow \mathcal{A}_0$ be its
inverse. The map $\psi$ extends continuously to $\S^1$, and $\varphi \circ f^3 \circ \psi=h_2$. As above let $w$ be a $h_2$
periodic point in the
first quadrant such that the hyperbolic geodesic $\widetilde{\gamma}$ connecting $w$ and $\overline{w}$ separates the
critical value $z=1/3$ from the
parabolic fixed point $z=1$. Define $\widehat{\gamma}= \psi(\widetilde{\gamma})$ and $\widehat{\gamma'}=
f^{-1}(\widehat{\gamma})\cap \overline{\mathcal{A}_2}$. Let $U$ be the Jordan domain bounded by $\widehat{\gamma}$ union the
arcs up to potential level 1 of
the external rays landing at $\psi(w)$ and $\psi(\overline w)$ union $\widehat{\gamma'}$ union the arcs up to
potential level $1$ of the external rays landing at $f^{-1}(\psi(w))\cap \overline{\mathcal{A}_2}$ and $f^{-1}(\psi(\overline
w))\cap \overline{\mathcal{A}_2}$, together
with the two arcs of the level $1$ equipotential connecting this four rays around the parabolic fixed point. Let $U'$ be the
connected component of $f^{-3}(U)$ containing $(-1+\sqrt{3}i)/4 $
and the dividing arcs $\gamma_+$ and $\gamma_-$ be the external rays for angles $1/7$ and $2/7$
respectively parametrized by potential. Then 
($f^3,U',U,\gamma$) is a parabolic-like map of degree $2$ (see Fig. \ref{fatrabbit}).\\

More generally, define $\lambda_{p/q}= \exp(2\pi i p/q)$ with $p$ and $q$ coprime, $c_{p/q}=
\frac{\lambda_{p/q}}{2}-\frac{\lambda^2_{p/q}}{4}$ and consider $f_q= z^2 + c_{p/q}$. The map $f_q$ has a parabolic
fixed point of multiplier $\lambda_{p/q}$ at $z=\lambda_{p/q}/2$, therefore $f^q$ has a parabolic fixed point of
multiplier $1$ and
parabolic multiplicity $q$. 

Repeating the construction done above one can see that the map $f^q$ restricts to a degree $2$
parabolic-like map.

\begin{figure}[hbt!]
\centering
\psfrag{g+}{$\gamma_+$}
\psfrag{g-}{$\gamma_-$}
\psfrag{u}{$U$}
\psfrag{u'}{$U'$}
\psfrag{f}{$f^3$}
\includegraphics[width= 12cm]{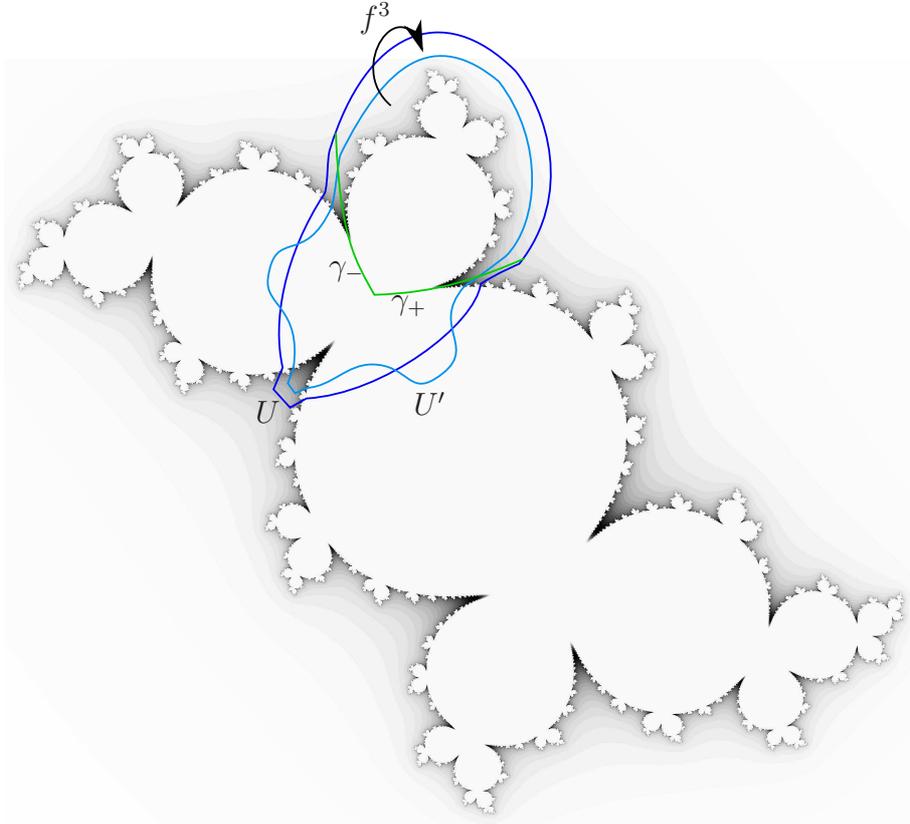}
\caption{\small The third iterate of the map $f=z^2+c$, for $c=(-1+3\sqrt{3}i)/8$, restricts to a degree $2$ parabolic-like map.}
\label{fatrabbit}
\end{figure}	
	
\end{enumerate}
\begin{defi}
Let $(f, U', U, \gamma)$ be a parabolic-like map. We define the \emph{filled Julia set $K_f$} of $f$ as the set
of points
in $U'$ that
 never leave $(\Omega' \cup \gamma_{\pm}(0))$ under iteration:
$$K_{f}:=\{z\in U'\,\vert\, \forall n \geq 0\ ,\,\, f^{n}(z)\in \Omega' \cup \gamma_{\pm}(0) \}. $$
\end{defi}

\begin{remark}
An equivalent definition for the filled Julia set of $f$ is 
$$K_f= \bigcap_{n \geq 0} f^{-n}(U\setminus \Delta).$$ 
The filled Julia set is a compact subset of $U \cap U'$ and it is full (since it is the intersection of topological disks).
\end{remark}
As for polynomials, we define the Julia set of $f$ as the boundary of the filled Julia set:
$$J_{f}:=\partial K_{f}$$

\subsubsection*{Motivations for the definition}
A parabolic-like map can be seen as the union of two different dynamical parts: a polynomial-like part
(on $\Omega'$) and a parabolic one (on $\Delta'$), which are connected by the dividing arc $\gamma$.

The parabolic fixed point belongs to the interior of the domain of a parabolic-like map in order
to insure that the filled Julia set is compactly contained in the intesection of the domain and the range.
The
dividing arc separates the exterior attracting petals from the filled Julia set of the parabolic-like mapping,
and for this reason the dividing arc is part of the definition
of parabolic-like mapping (note that we could have constructed the dividing arc a posteriori by Fatou coordinates).
The definition of parabolic-like map also guarantees the existence of an annulus, $U \setminus \Omega'$, essential in defining the external class and
to perform the surgery which will
give
the Straightening Theorem.

There are many prospect definitions of a parabolic-like map. The one introduced here is flexible enough to capture many
interesting examples, and rigid enough to allow for a viable theory.
\subsubsection*{Conjugacies and statement of the main result}
We say that
$(f,U'_1,U_1,\gamma_1)$ is a \textit{parabolic-like restriction} of $(f,U'_2,U_2,\gamma_2)$ if $U'_1 \subseteq U'_2$ and
$(f,U'_i,U_i,\gamma_i),\,\,i=1,2$ are parabolic-like maps
with the same degree and filled Julia set.
\begin{defi}\textbf{(Conjugacy for parabolic-like
mappings)\,\,\,\,\,\,\,\,\,\,\,\,\,\,\,\,\,\,\,\,\,\,\,\,\,\,\,\,\,\,\,\,\,\,\,\,\,\,\,\,\,\,\,\,\,}
We say that the parabolic-like mappings ($f,U',U,\gamma_{f}$) and ($g,V',V,\gamma_{g}$)
are \textit{topologically conjugate} if there exist parabolic-like restrictions 
($f,A',A,\gamma_{f}$) and ($g,B',B,\gamma_{g}$), and a homeomorphism $\varphi :A \rightarrow B$
such that
$\varphi(\gamma_{\pm f})=\gamma_{\pm g}$ and
$$\varphi(f(z))=g(\varphi(z))\,\,\,\,\mbox{ on } \Omega'_{A_f} \cup \gamma_{ f}$$
If moreover $\varphi$ is quasiconformal (and $\bar{\partial}\varphi = 0$ a.e. on $K_f$), we say that $f$ and $g$
 are \textit{quasiconformally} (\textit{hybrid}) conjugate.
\end{defi}
A topological conjugacy between parabolic-like maps is a homeomorphism
defined on a neighborhood of the filled Julia set, which conjugates dynamics just on $ \Omega'\cup \gamma$. This definition
allows flexibility regarding the parabolic multiplicity of the parabolic fixed point.

In this paper we will prove the following:
\begin{st}\label{tst}
\
\begin{enumerate}
\item Every degree $2$ parabolic-like mapping ($f,U',U,\gamma_{f}$) is hybrid equivalent to a member of the family
$Per_1(1)$.
\item Moreover, if $K_f$ is connected, this member is unique. 
\end{enumerate}
\end{st}
Part $1$ follows from Proposition \ref{stex} together with Theorem \ref{thm}, while part $2$ follows from Proposition \ref{unic}.
\subsection{Equivalence of parabolic-like mappings and\\ Isotopy}

Two parabolic-like maps are \textit{equivalent}, and we do not distinguish between them, if they have a common parabolic-like restriction.
Given a parabolic-like map $(f,U'_1,U_1,\gamma_1)$, the arc $\gamma_2: [-1,1]\rightarrow \overline U$
with $\gamma_2(0)= \gamma_1(0)$ is \textit{isotopic} to $\gamma_1$ if
there exists a domain $U_2' \subseteq U'_1$ for which
 $(f,U'_i,U_i,\gamma_i),\,\,i=1,2 $ have a common parabolic-like restriction.

\begin{lemma}\label{is}
 Let ($f,U',U,\gamma$) be a parabolic-like map, and let $\gamma_s:[-1,1]\rightarrow \overline U$  be an arc
forward invariant under $f$, with $\gamma_s(0)= \gamma(0)$ and $C^1$ on $[-1,0]$ and $[0,-1]$.
Then $\gamma_s$ and $\gamma$ are isotopic if and only if
their projections to \'Ecalle cylinders are isotopic and the isotopies are disjoint from the projections
of the filled Julia set and the critical points.
\end{lemma}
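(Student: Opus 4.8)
The plan is to prove the two implications of the biconditional separately, using the \'Ecalle cylinder as the natural setting where the parabolic dynamics becomes a pure translation. Throughout, recall that the arcs $\gamma$ and $\gamma_s$ both terminate at the parabolic fixed point $z_0=\gamma(0)=\gamma_s(0)$ and are forward invariant, so their tails enter a repelling petal $\Xi$ and project, via a Fatou coordinate $\phi$, to the \'Ecalle cylinder $\C/\Z$ of that petal. The key observation is that an isotopy of arcs upstairs (in $\overline U$) that respects the dynamics corresponds, under $\phi$, to an isotopy of the projected arcs in the cylinder; and crucially the isotopy must avoid both $K_f$ (to preserve the filled Julia set, which is shared by any common parabolic-like restriction) and the critical points (so that taking the appropriate component $U_2'$ of $f^{-1}(U_2)$ stays a degree-$d$ proper map and the restriction data is genuinely common).

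For the forward direction (\emph{only if}), I would assume $\gamma_s$ and $\gamma$ are isotopic, i.e. there is $U_2'\subseteq U'$ realizing a common parabolic-like restriction of $(f,U',U,\gamma)$ and $(f,U_2',U,\gamma_s)$. The ambient isotopy of the two arcs in $\overline U$ carrying $\gamma$ to $\gamma_s$ restricts, on the tails inside the repelling petal, to an isotopy of curves; pushing this forward by the Fatou coordinate $\phi$ gives an isotopy of the projected arcs in the \'Ecalle cylinder. Since a common parabolic-like restriction must have the same filled Julia set $K_f$ and must keep $U_2'$ a proper degree-$d$ preimage, the isotopy cannot sweep across $\phi(K_f)$ nor across the projections of the critical points; otherwise the combinatorics of the restriction (which rays/petals are separated) or the degree would change. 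This yields that the downstairs isotopy is disjoint from the projections of $K_f$ and of the critical set.

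For the reverse direction (\emph{if}), I would start from an isotopy of the projected arcs in the \'Ecalle cylinder that avoids the projections of $K_f$ and the critical points, and lift it back through $\phi^{-1}$ to an isotopy of the tails of $\gamma$ and $\gamma_s$ inside the repelling petal $\Xi$. Because the arcs agree in their combinatorial endpoint $z_0$ and the isotopy downstairs is dynamically admissible, the lift extends to an honest isotopy in $\overline U$ from $\gamma$ to $\gamma_s$; one then defines $U_2'$ as the component of $f^{-1}(U)$ cut out by $\gamma_s$ in the same way $U'$ is cut out by $\gamma$, and checks that $(f,U_2',U,\gamma_s)$ together with $(f,U',U,\gamma)$ share a common parabolic-like restriction with filled Julia set $K_f$. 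Avoiding the critical points is exactly what guarantees that the relevant preimage component is an isomorphism on the $\Delta$-part and degree $d$ on the $\Omega$-part, so the restriction is parabolic-like of the same degree; avoiding $\phi(K_f)$ guarantees the two maps have the same filled Julia set.

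The main obstacle I expect is the careful passage between the ambient isotopy in $\overline U$ and the isotopy in the \'Ecalle cylinder near the parabolic fixed point, where the Fatou coordinate degenerates (it is only defined on the petal and $\phi\to\infty$ as $z\to z_0$). Controlling the behaviour of the isotopy in this limit, so that the endpoint condition $\gamma_s(0)=z_0$ is preserved and the lifted isotopy genuinely closes up at the parabolic point rather than escaping to the cylinder's end, is the delicate step; it is precisely here that the forward-invariance of the arcs and the $C^1$ regularity on $[-1,0]$ and $[0,1]$ are needed to match the translation structure on the cylinder. The remaining verifications — that a dynamically admissible isotopy preserves the proper degree-$d$ structure and the filled Julia set — are then essentially bookkeeping with the definitions of parabolic-like map and common restriction.
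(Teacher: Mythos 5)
Your ``if'' direction is essentially the paper's proof: transport the cylinder isotopy back into the repelling petals, extend by the dynamics to forward-invariant arcs, and conclude because the avoidance hypothesis forces $K_f$ and all critical points to lie in $\Omega'_s$, so that $(f,U',U,\gamma_s)$ is a parabolic-like restriction of $(f,U',U,\gamma)$. Two remarks on how the paper closes the step you flag as delicate. First, the target is \emph{not} an ``honest isotopy in $\overline U$'': in this paper \emph{isotopic} is defined combinatorially, as the existence of a common parabolic-like restriction, so it suffices to verify that the reconstructed arc is a dividing arc for such a restriction; no ambient isotopy is ever built. Second, the degeneration of the Fatou coordinate at $z_0$ causes no trouble because the paper lifts only a fundamental segment $\beta^{-1}(H_\pm(s,\cdot))$ over $[\tau,d\tau]$ and then propagates it by $\gamma_{s+}(d^n t)=f^n(\gamma_{s+}(t))$ together with the backward branches; since these lie in a repelling petal, the backward iterates converge to $z_0$ automatically, and the conditions $\gamma_s(0)=\gamma(0)$, $\gamma_s(\pm1)\in\partial U$ are imposed in the construction rather than deduced from a limiting argument.

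The genuine gap is in your ``only if'' direction. The hypothesis ``$\gamma$ and $\gamma_s$ are isotopic'' gives you only a common parabolic-like restriction; it does \emph{not} give you any ``ambient isotopy of the two arcs in $\overline U$ carrying $\gamma$ to $\gamma_s$,'' so there is nothing to restrict to the petals and push forward by $\phi$ --- you are assuming, in disguised form, exactly the object you must produce. Moreover your justification that the isotopy ``cannot sweep across'' the projections of $K_f$ and of the critical points reverses the quantifiers: what must be shown is that \emph{some} isotopy of the projected curves avoids these sets, not that \emph{every} isotopy does (a purely topological isotopy could sweep across them without contradicting anything about degrees or filled Julia sets, since those are attached to the two endpoint configurations, not to intermediate curves). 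The correct argument --- and the reason the paper dismisses this direction as trivial --- is static, not dynamic: both projections are essential simple closed curves in the bi-infinite cylinder $\Xi_\pm/f$, both are disjoint from the projections of $K_f$ and of the critical points, and those projections lie on the same end of the cylinder with respect to each curve, because both $\Omega'$ and $\Omega'_s$ contain $K_f$ and all critical points (the two maps being parabolic-like of the same degree with the same filled Julia set). Two such curves in a cylinder are isotopic through curves avoiding that set, which is the desired conclusion.
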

\begin{proof}
Let us prove that, if the projections of
$\gamma$ and $\gamma_s$ to \'Ecalle cylinders are isotopic
and the isotopies are disjoint from the projections
of the filled Julia set and the critical points, then $\gamma_s$ and $\gamma$ are isotopic. The vice versa is trivial. 

\begin{figure}[hbt!]
\centering
\psfrag{g}{$\gamma_+$}
\psfrag{g-}{$\gamma_-$}
\psfrag{b}{$\gamma_s$}
\psfrag{r}{$\gamma_{\hat s}$}
\psfrag{f}{$f$}
\psfrag{H}{$H_+$}
\psfrag{bo}{$H_+(s,\cdot)$}
\psfrag{ro}{$H_+(\hat s,\cdot)$}
\psfrag{go}{$H_+(0,\cdot)$}
\includegraphics[width= 12cm]{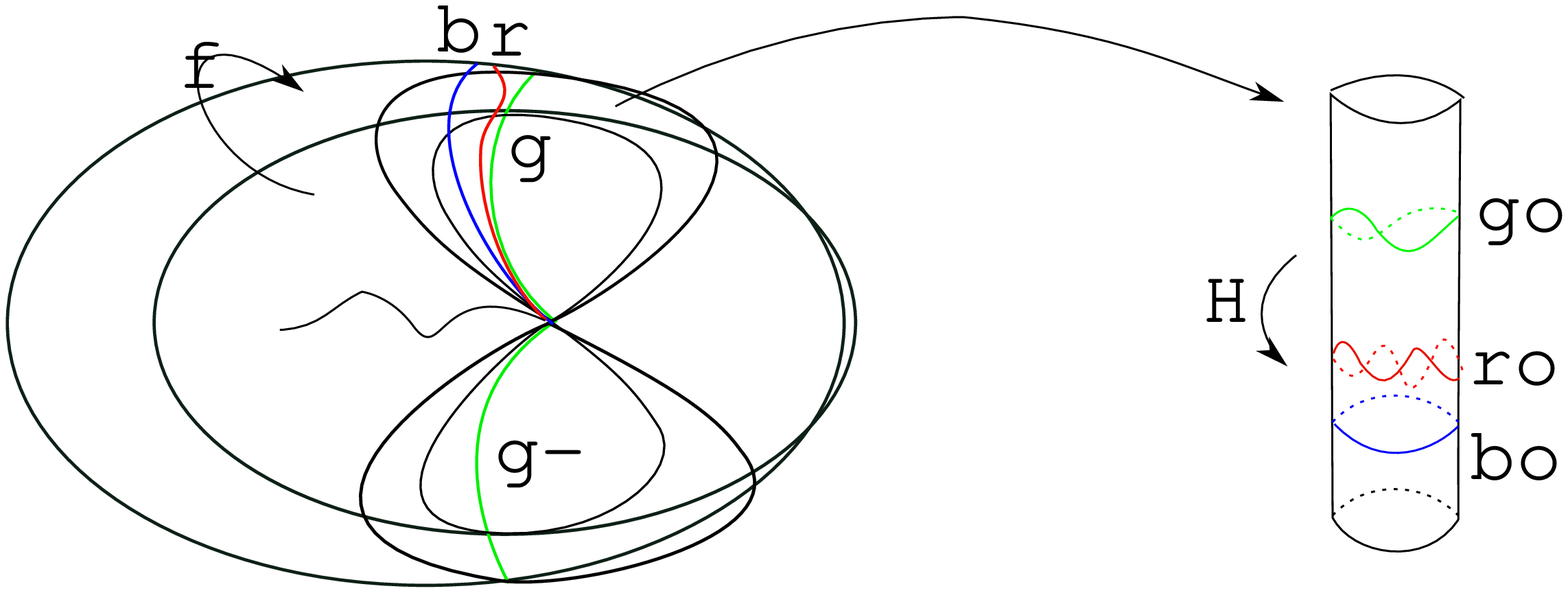}
\caption{\small Construction of dividing arcs isotopic to $\gamma$.}
\label{isotopy}
\end{figure}

Let $\Xi_+$ and $\Xi_-$ be repelling petals where $\gamma_+$ and $\gamma_-$ respectively reside
(note that $\Xi_+$ and $\Xi_-$ may coincide). Then the quotient 
manifolds $\Xi_+ /f$, $\Xi_- /f$ are
conformally isomorphic to the bi-infinite cylinder.
Call $\beta$ the isomorphism between $\Xi_+/f$ and $\C/\Z$, and $\delta$ the 
isomorphism between $\Xi_-/f$ and $\C/\Z$. Let 
$$H_+
:[0,1] \times [\tau,d\tau] \rightarrow
\C / \Z$$ 
$$(s,t) \rightarrow H_+(s,t), $$
$$H_-
:[0,1] \times [d \hat \tau, \hat \tau]  \rightarrow
\C / \Z$$ 
$$(s,t) \rightarrow H_-(s,t), $$
 be isotopies, disjoint from the projections of the filled Julia set and the critical points,
such that for every fixed $s \in
[0,1],$ both $H_{\pm}(s,t):\R/\Z \rightarrow \C /\Z$ are at least $C^1$. Set 
$\gamma_{s +}[\tau,d\tau]=\beta^{-1}(H_+(s, \cdot))$ and 
$\gamma_{s -}[d \hat \tau, \hat \tau]=\delta^{-1}(H_-(s, \cdot))$
Define $\gamma_s$ by extending $\gamma_{s +}$ and $\gamma_{s -}$
 by the dynamics of $f$ to forward
invariant curves
 in $\Xi_+$ and $\Xi_-$ respectively (see Picture \ref{isotopy}), \textit{i.e.}:
\begin{enumerate}
 \item $\gamma_{s +}(d^nt)=f^n(\gamma_{s +}(t)),\,\,\gamma_{s +}(t/d^n)$ 
$=f(\gamma_{s +}(t))^{-n} \,\,\, \forall \tau
\leq t \leq d \tau$;
 \item $\gamma_{s -}(d^nt)=f^n(\gamma_{s -}(t)),\,\,\gamma_{s -}(t/d^n)$
$=f(\gamma_{s -}(t))^{-n} \,\,\, \forall d \hat
\tau \leq t \leq \hat \tau$;
\item $\gamma_s(\pm 1) \in \partial U$ and $\gamma_s(0)=\gamma(0);$
\end{enumerate}
where $f(\gamma_{s})^{-n}$ is the branch which gives
 continuity. Then $\gamma_s$ divides $U$ and $U'$ in $\Omega_s,\,\,\Delta_s$ and 
$\Omega'_s,\,\,\Delta'_s$ respectively, and by construction $\Omega'_s$ contains $K_f$ and all
the critical points of $(f,U',U,\gamma)$.
Hence $(f,U',U,\gamma_s)$ is a parabolic-like restriction of $(f,U',U,\gamma)$, and
thus the arcs $\gamma$ and $\gamma_s$ are isotopic.
\end{proof}

Note that, by construction, if ($f,U',U,\gamma$) is a parabolic-like map and $\gamma_s$ is isotopic to $\gamma$,
then the arc
$\gamma_{+
s}$ resides in
the same petal as $\gamma_+$ and the arc $\gamma_{- s}$ resides in the same petal as $\gamma_-$.
\section{The external class of a parabolic-like map}\label{c}
 In analogy with the polynomial-like setting, we want to associate to any parabolic-like map ($f,U',U,\gamma $)
of degree $d$ a real-analytic map 
$h_f : \S^1 \rightarrow \S^1$ of the same degree $d$ and with a parabolic fixed point, unique up to conjugacy by
a real-analytic diffeomorphism. We will
call $h_f$ an \textit{external map} of $f$, and we will call $[h_f]$ (its
conjugacy class under real-analytic diffeomorphisms) the \textit{external class} of $f$. 
\subsubsection*{Construction of an external map of a parabolic-like map $f$ with connected Julia set}\label{em}
The construction of an external map of a parabolic-like map with connected Julia set follows the construction of an
external map in \cite{DH}, up to the differences given by the geometry of our setting.
Let ($f,U',U,\gamma$) be a parabolic-like map of degree $d$ with connected filled Julia set $K_f$. Then
$K_f$ contains all the critical points of $f$ and hence $f:U'\setminus K_f
\rightarrow U\setminus K_f$ is a holomorphic degree $d$ covering map.
Let $$\alpha : \widehat{\C} \setminus K_f \longrightarrow \widehat{\C} \setminus
\overline{\D}\,\,\,\,\,\,\,\,\,\,\,\,\, (1)$$
be the Riemann map, normalized by $\alpha(\infty)=\infty$ and $\alpha(\gamma(t)) \rightarrow 1$ as $t \rightarrow 0$.
Write $W'=\alpha(U' \setminus K_f)$ and $W=\alpha(U \setminus K_f)$ (see Fig. \ref{ABB}) and define the map:
$$h^+:= \alpha \circ f \circ \alpha^{-1}:\,\,W'  \rightarrow W,$$
Then the map $h^+$ is a holomorphic degree $d$ covering. 
\begin{figure}[hbt!]
\centering
\psfrag{A}{$U$}
\psfrag{A'}{$U'$}
\psfrag{B'_+}{$W'$}
\psfrag{B_+}{$W$}
\psfrag{B}{$W$}
\psfrag{B'}{$W'$}
\psfrag{h}{$h^+$}
\psfrag{f}{$f$}
\psfrag{pr}{$\alpha$}
\psfrag{S}{$\S_1$}
\psfrag{g+}{$\gamma_+$}
\psfrag{g-}{$\gamma_-$}
\psfrag{pg+}{$\widetilde{\gamma_+}$}
\psfrag{pg-}{$\widetilde \gamma_-$}
\psfrag{ag+}{$\alpha(\gamma_+)$}
\psfrag{ag-}{$\alpha(\gamma_-)$}
\psfrag{AA}{$Q_f$}
\psfrag{D}{$\Delta'$}
\psfrag{kf}{$K_f$}
\includegraphics[width= 15cm]{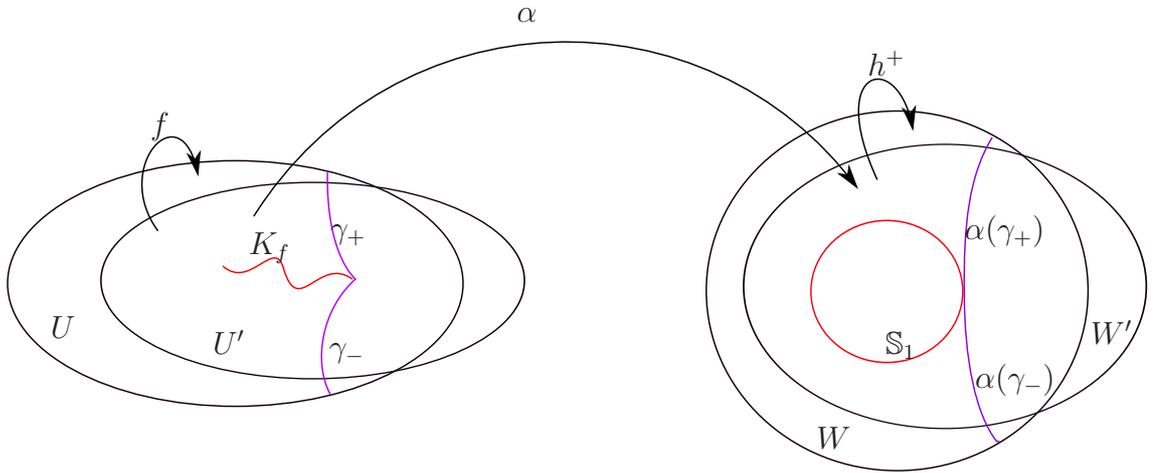}
\caption{\small Construction of an external map
in the case $K_f$ connected.
We set $W'=\alpha(U' \setminus K_f),\,\,\,W=\alpha(U \setminus K_f)$ and $h^+:W' \rightarrow W$.}
\label{ABB}
\end{figure}
Let $\tau(z)= 1/\bar{z}$ denote the reflection with respect to the unit circle, and define $W_-=\tau(W)$,
$W'_-=\tau(W')$, $\widetilde{W}=W \cup \S^1 \cup W_-$ and $\widetilde{W'}=W' \cup \S^1 \cup W'_-$.
Applying the strong reflection principle with respect to $\S^1$ we can extend analytically the map  $h^+: W'
\rightarrow W$ to $h: \widetilde{W'} \rightarrow \widetilde{W}$.
Let $h_f$ be the restriction of $h$ to the unit circle,
then the map $h_f: \S^1 \rightarrow \S^1$ is an \textit{external map} of $f$.
An external map of a parabolic-like map is defined up to a real-analytic diffeomorphism.

\subsubsection*{The general case}\label{gc}
Let ($f,U',U,\gamma$) be a parabolic-like map of degree $d$. To deal with the case where the filled Julia set is not
connected, we will lean on the similar construction in the polynomial-like case. We construct annular Riemann
surfaces $T$ and $T'$ that will play the role of $U' \setminus K_f$ and $U \setminus
K_f$ respectively, and an analytic map $F: T \rightarrow T'$ that will play the role of $f$.

Let $V\approx \D$ be a full relatively compact connected subset of $ U $ containing
$\overline \Omega'$, the critical values of $f$ and such that $(f, f^{-1}(V), V ,\gamma)$ (after riscaling $\gamma$) is a
parabolic-like restriction of ($f, U',U,\gamma$). Call $L=f^{-1}(\overline V)\cap \overline \Omega'$ 
and $M=f^{-1}(\overline V)\cap \Delta'$. Define $X'_0=(U\cup U')\setminus L$, $U_0=U \setminus \overline{V}$, 
$A_0=U \cap U' \setminus L$, $X_0=U\setminus L$, $A_0'=U' \setminus L$ and $A''_0= U'\setminus f^{-1}(\overline V)$.
Note that $X_0$ is an annular domain.

Let $\rho_0: X_1 \rightarrow X_0$ be a degree $d$ covering map for some Riemann surface $X_1$, and define
$V_1=\rho_0^{-1}(V\setminus L)$. Define $X_1''=X_1 \setminus \overline{V_1}$. The map $f: A_0'' \rightarrow U_0$ is
proper holomorphic of
degree
d, and $\rho_0: X_1'' \rightarrow U_0 $ is a proper holomorphic map of degree d. Therefore we
can choose $\pi_0: A_0'' \rightarrow X_1''$, a lift of $f: A_0'' \rightarrow U_0$
to $\rho_0: X_1'' \rightarrow U_0 $, and $\pi_0$ is an isomorphism. 
The subset $\Delta$ has $d$ preimages under the map $\rho_0$. Let us call $\Delta_1$ the preimage of  $\Delta$ under
$\rho_0$ such that $\Delta_1 \cap \pi_0(A_0''\cap \Delta') \neq \varnothing$. 
Since $f: \Delta' \rightarrow \Delta$ is an isomorphism, we can extend the map $\pi_0$ to $\Delta'$.
Let us call $B_1'=X_1''\cup \Delta_1$. Since $\pi_0 (\Delta' \setminus A_0'') \cap X_1''=\varnothing$, the extension
$\pi_0:A_0' \rightarrow B_1'$ is an isomorphism (see Fig \ref{gencase1}).
\begin{figure}[hbt!]
\centering
\psfrag{U'}{$U'$}
\psfrag{U}{$U$}
\psfrag{A0}{$A'_0$}
\psfrag{X0}{$U_0$}
\psfrag{X1}{$B_1'$}
\psfrag{L}{$L$}
\psfrag{f}{$f$}
\psfrag{r0}{$\rho_0$}
\psfrag{p}{$\pi_0$}
\psfrag{M}{$M$}
\psfrag{V}{$V$}
\psfrag{V'}{$V_1$}
\psfrag{D1}{$\Delta_1$}
\includegraphics[width= 13cm]{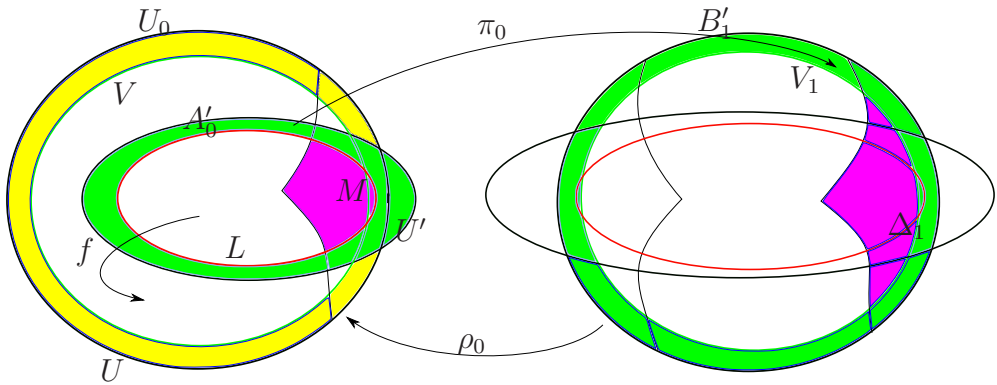}
\caption{\small On the left: in yellow $U_0=U\setminus \overline{V}$, in green plus purple $A_0'=U' \setminus L$. On the
right: in
green plus purple $B_1'=X_1''\cup \Delta_1$. The map $\pi_0: A_0' \rightarrow B_1'$
is an isomorphism.}
\label{gencase1}
\end{figure}
Let us call $B_1=\pi_0(A_0)$.
Define $A_1'=\rho_0^{-1}(A_0)$ and  $f_1= \pi_0 \circ \rho_0 : A_1' \rightarrow B_1$. The map $f_1$ is proper,
holomorphic and of
degree $d$ (see Fig.\ref{gencase2}). Indeed
$\rho_0: A_1' \rightarrow A_0$ is a degree $d$ covering by definition and $\pi_0: A_0 \rightarrow B_1$ is
an isomorphism because it is a restriction of an isomorphism.
Define $X_1'=X_1\setminus \pi_0(A_0' \setminus A_0)$, then $B_1 \subset X_1'$.
\begin{figure}[hbt!]
\centering
\psfrag{A0}{$A_0$}
\psfrag{A1}{$A'_1$}
\psfrag{X0}{$U_0$}
\psfrag{X1}{$B_1$}
\psfrag{L}{$L$}
\psfrag{f}{$f$}
\psfrag{f1}{$f_1:A'_1 \rightarrow B_1$}
\psfrag{r0}{$\rho_0$}
\psfrag{p}{$\pi_0$}
\psfrag{M}{$M$}
\psfrag{V}{$V$}
\psfrag{V'}{$V'$}
\psfrag{D1}{$\Delta_1$}
\includegraphics[width= 13cm]{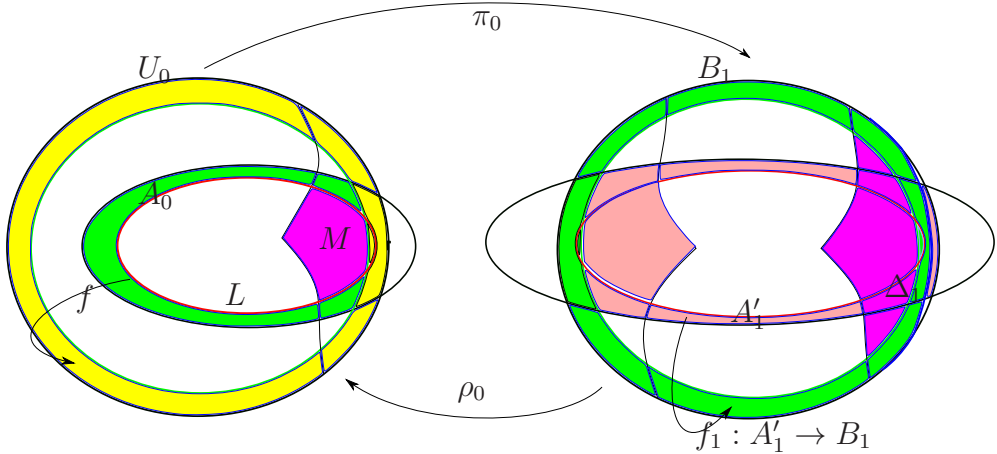}
\caption{\small The map $f_1= \pi_0 \circ \rho_0 : A'_1 \rightarrow B_1$ is proper holomorphic of degree
$d$.}
\label{gencase2}
\end{figure}

Let $\rho_1: X_2 \rightarrow X_1'$ be a degree $d$ covering map for some Riemann surface $X_2$, and
call $B'_2=\rho_1^{-1}(B_1)$. Define
 $\pi_1: A_1' \rightarrow B'_2$ as a lift of $f_1$ to $\rho_1$. Then $\pi_1$ is an isomorphism, since $f_1: A'_1
\rightarrow B_1$ is a degree 
$d$ covering and $\rho_1:B'_2 \rightarrow B_1$ is a degree $d$ covering as well. 
Define $A_1=A_1' \cap X_1'$, and $B_2=\pi_1(A_1)$.
Define $A_2'=\rho_1^{-1}(A_1)$  and  $f_2= \pi_1 \circ \rho_1 : A_2' \rightarrow B_2$. The map $f_2$ is proper,
holomorphic and
of degree $d$, indeed $\rho_1 : A_2' \rightarrow A_1$ is a degree $d$ covering and $\pi_1:A_1\rightarrow B_2$ is an
isomorphism.
Define $X_2'=X_2 \setminus \pi_1(A'_1\setminus A_1)$, then $B_2 \subset X_2'$.
\begin{figure}[hbt!]
\centering

\psfrag{A1}{$A_1$}
\psfrag{X0}{$B_1$}
\psfrag{X1}{$X_2'$}
\psfrag{f1}{$f_1$}
\psfrag{f}{$f_1$}
\psfrag{r0}{$\rho_1$}
\psfrag{p}{$\pi_1$}

\includegraphics[width= 13cm]{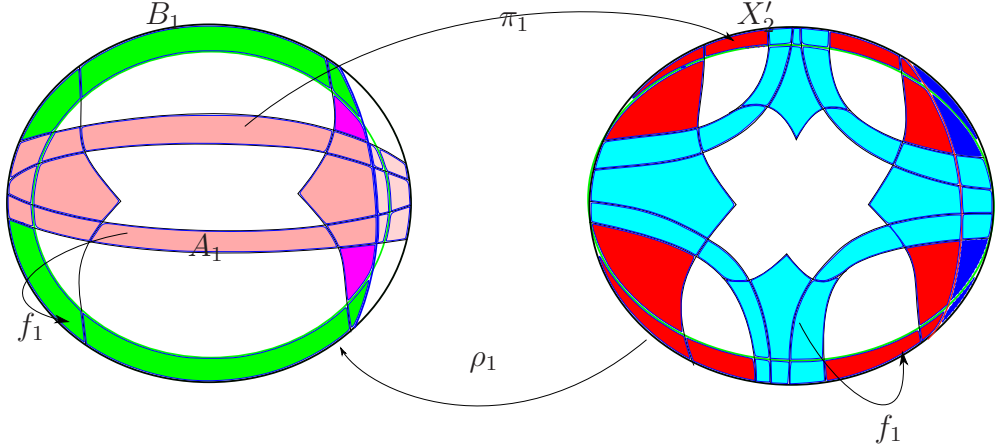}
\caption{\small The map $\pi_1: A_1' \rightarrow B'_2$ is a lift of $f_1$ to $\rho_1$, and it is an
isomorphism.}
\label{gencase3}
\end{figure}

Define recursively $\rho_{n-1}: X_n \rightarrow X'_{n-1}$ for $n>1$ as a holomorphic degree $d$ covering for some
Riemann
surface $X_n$ and call $B'_n= \rho_{n-1}^{-1}(B_{n-1})$. Define recursively $\pi_{n-1}: A_{n-1}' \rightarrow B'_n
\subset X_n$ as a lift of
$f_{n-1}$ to $\rho_{n-1}$. Then $\pi_{n-1}$ is an isomorphism.
Define $A_{n-1}=A_{n-1}' \cap X_{n-1}'$, and $B_n=\pi_{n-1}(A_{n-1})$.
Define $A_n'=\rho_{n-1}^{-1}(A_{n-1})$  and  $f_n= \pi_{n-1} \circ \rho_{n-1} : A_n' \rightarrow B_n$. Then all the
$f_n$ are proper holomorphic maps
of degree $d$, indeed $\rho_{n-1} : A_n' \rightarrow A_{n-1}$ are degree $d$ coverings and
$\pi_{n-1}:A_{n-1}\rightarrow B_n$ are isomorphisms.
Define $X_n'=X_n \setminus \pi_{n-1}(A_{n-1}'\setminus A_{n-1})$, then $B_n \subset X_n'$.

 We define $X'=\coprod_{n\geq0} X'_n$ and $X=\coprod_{n\geq1} X_n$ (disjoint
union). Let $T'$ be the quotient of $X'$
by the equivalence relation identifying $x \in A'_n$ with $x'=\pi_n(x) \in X_{n+1}$, and $T$ be the quotient of $X$ by
the same equivalence relation. Then $T'$ is an annulus, since it is constructed by identifying at each level an
inner annulus $A_i \subset X_i'$ with an outer annulus $B_{i+1}\subset X_{i+1}'$ in the next level.
Similarly $T$ is an annulus, since it is constructed by identifying at each level an
inner annulus $A_i' \subset X_i$ with an outer annulus $B'_{i+1}\subset X_{i+1}$ in the next level. 
Hence (since $\forall i>1, \,\, X'_i \subset X_i$) $T \cup T'=T \cup X_0'/
\sim$ is an annulus, since $X_0'$ is an annulus and $\pi_0$ identifies an inner annulus of $X_0'$ (which is $A_0'$) with
an outer annulus of $X_1$ (which is $B_1'$), and $T$ is an annulus. 
The covering maps $\rho_n$ induce
a degree $d$
holomorphic covering map $F:T \rightarrow T'$. Indeed, $F$ is well defined, since at each level $f_n=
\pi_{n-1} \circ \rho_{n-1}$ by definition and $\pi_n$ is a lift of
$f_{n}$ to $\rho_{n}$. Therefore $\rho_n \circ \pi_n = f_n=\pi_{n-1} \circ \rho_{n-1}$, and the following diagram
commutes
\begin{equation}\label{g}\begin{CD}
A_n'  @>\pi_n>> B'_{n+1} \\
@VV \rho_{n-1} V @VV\rho_n V\\
A_n @>\pi_{n-1}>> B_n
\end{CD}\end{equation}
Finally, the map $F$ is proper of degree $d$ since by definition $F_{|X_n}=\rho_{n-1}:X_n \rightarrow X_{n-1}'$ is a
proper map (and
 $F_{|X_1}=\rho_0: X_1 \rightarrow X_0'$ is proper onto its range, which is $X_0$).

Now, let us construct an external map for $f$. Let $m>0$ be the modulus of the annulus $T\cup T'$. Let
$A
\subseteq \C$ be any annulus with inner boundary $\S^1$ and modulus $m$. Then there exists an isomorphism 
$$\alpha : T\cup T' \longrightarrow A\,\,\,\,\,\,\,\,\,\,\,\,\, (2)$$
with $|\alpha(z)| \rightarrow 1$ when $z \rightarrow L$ and $\alpha(z) \rightarrow 1$ when $z \rightarrow z_0$
within $\Delta/\sim $ (where $\Delta/\sim = \{z \, | \, \exists \, n \, :\, \pi_0^{-1} \circ ... \circ \pi_{n-1}^{-1}
\circ \pi_n^{-1}(z) \, \in \, \Delta \cup \Delta' \}$). 
Then we just have to repeat the construction done for the case $K_f$ connected, with $T$ and $T'$ playing the role of $U' \setminus K_f$ and $U \setminus
K_f$ respectively, and $F$ playing the role of $f$.

\subsection{External equivalence}
\begin{defi}
Two parabolic-like maps ($f,U',U,\gamma_{f}$) and ($g,V',V,\gamma_{g}$) are \textit{externally equivalent}
if their external maps are conjugate by a real-analytic diffeomorphism, \textit{i.e.} if 
their external maps belong to the same external class.
\end{defi}

Let ($f,U',U,\gamma_{f}$) and ($g,V',V,\gamma_{g}$) be two parabolic-like mappings with connected Julia sets.
By the construction of an external map we gave (see Section \ref{em}),
it is easy to see that ($f,U',U,\gamma_{f}$) and ($g,V',V,\gamma_{g}$) are externally equivalent if
and only if there exist parabolic-like restrictions ($f,A',A,\gamma_{f}$) and ($g,B',B,\gamma_{g}$),
and a biholomorphic map
$$\psi : (A \cup A') \setminus K_f \rightarrow (B \cup B') \setminus K_g$$
such that $\psi(\gamma_{\pm f})=\gamma_{\pm g}$ and $\psi \circ f = g \circ \psi$ on $A'\setminus K_f$.
We call $\psi$ an \textit{external equivalence} between $f$ and $g$.

The following Lemma shows that the situation is analogous also in the case where the Julia sets are not connected.
\begin{lemma}\label{lem}
Let $(f_i,U'_i,U_i,\gamma_i)$, $i=1,2$ be two parabolic-like mappings
with disconnected Julia sets. 
Let $W_i \approx \D$ be a full relatively compact connected subset of $U_i$ containing $ \overline{\Omega_i'} $ and the
critical
values of $f_i$, and such that $(f_i,\,f_i^{-1}(W_i),\, W_i,\,\gamma_i)$ is a parabolic-like restriction of
$(f_i,\,U'_i,\,U_i,\,
\gamma_i)$. Define $L_i:=f_i^{-1}(\overline W_i) \cap \overline \Omega_i'$.
Suppose $$\overline{\varphi} : (U_1\cup U_1') \setminus L_1 \rightarrow (U_2 \cup U_2') \setminus L_2$$ is a biholomorphic
map such that $\overline{\varphi} \circ f_1= f_2 \circ \overline{\varphi}$ on $U_1' \setminus L_1$.
Then $(f_1,U'_1,U_1,\gamma_1)$ and $(f_2,U'_2,U_2,\gamma_2)$
are externally equivalent, and we say that
$\overline{\varphi}$ 
is an external equivalence between them.
\end{lemma}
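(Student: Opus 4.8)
The plan is to reduce external equivalence to the construction of a single isomorphism between the annular Riemann surfaces of Section~\ref{c}. Recall that the external map $h_{f_i}$ is read off from the surface $T_i\cup T_i'$ and the induced covering $F_i\colon T_i\to T_i'$: one uniformizes $T_i\cup T_i'$ by an isomorphism $\alpha_i$ onto a standard annulus with inner boundary $\S^1$ (see~(2)), sets $h_i^+=\alpha_i\circ F_i\circ\alpha_i^{-1}$, extends $h_i^+$ by Schwarz reflection across $\S^1$, and restricts to the circle. Hence it suffices to produce an isomorphism
$$\Phi\colon T_1\cup T_1'\longrightarrow T_2\cup T_2'$$
with $\Phi\circ F_1=F_2\circ\Phi$ that carries the inner boundary (the side accumulating on $L_1$) to the inner boundary. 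Indeed, $g:=\alpha_2\circ\Phi\circ\alpha_1^{-1}$ is then an isomorphism of standard annuli conjugating $h_1^+$ to $h_2^+$; it extends analytically across the analytic circle $\S^1$, and after reflection $g|_{\S^1}$ is a real-analytic diffeomorphism conjugating $h_{f_1}$ to $h_{f_2}$, which is exactly external equivalence.

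Everything therefore reduces to building $\Phi$, and the starting datum is $\overline\varphi$ itself. Since $(U_i\cup U_i')\setminus L_i$ is precisely the level-zero piece $X_{0,i}'$ of the tower, $\overline\varphi$ is an isomorphism $X_{0,1}'\to X_{0,2}'$, and the hypothesis $\overline\varphi\circ f_1=f_2\circ\overline\varphi$ says it intertwines the level-zero dynamics. First I would check that $\overline\varphi$ respects the intrinsic level-zero decomposition: it carries $L_1,\ U_{0,1},\ A_{0,1},\ A_{0,1}',\ A_{0,1}''$ and the pieces $\Delta,\Delta',M$ to their $f_2$-counterparts, conjugating the covering $f_1\colon A_{0,1}''\to U_{0,1}$ to $f_2\colon A_{0,2}''\to U_{0,2}$ and the isomorphism on $\Delta'$. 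All of these sets are defined intrinsically from $U_i,U_i',L_i$ and the dynamics, and the inclusion $\overline{\Omega_i'}\subseteq W_i$ secures the relevant containments, so this is a direct verification.

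Next I would propagate $\overline\varphi$ up the tower. Because the external class is well defined independently of the degree-$d$ coverings $\rho_{n,i}$ chosen in Section~\ref{c}, I am free to fix the tower for $f_1$ arbitrarily and build the tower for $f_2$ by transport along $\overline\varphi$. Inductively, given an isomorphism $\Phi_n$ intertwining the level-$n$ data (with $\Phi_0=\overline\varphi$), I push the covering $\rho_{n,1}$ forward along $\Phi_n$ to define $\rho_{n,2}$, and select the lift $\pi_{n,2}$ of $f_{n,2}$ compatible with $\pi_{n,1}$; the relation $\rho_n\circ\pi_n=f_n=\pi_{n-1}\circ\rho_{n-1}$ encoded in the commuting square~\eqref{g} makes this choice consistent and produces an isomorphism $\Phi_{n+1}$ with $\Phi_{n+1}\circ\pi_{n,1}=\pi_{n,2}\circ\Phi_n$. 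Since each $\Phi_n$ intertwines the gluing maps $\pi_{n,i}$, the family descends to the quotient and yields the desired $\Phi\colon T_1\cup T_1'\to T_2\cup T_2'$; as $F_i$ is induced by the $\rho_{n,i}$ and $\Phi$ intertwines the $\rho_{n,1}$ with the $\rho_{n,2}$, we get $\Phi\circ F_1=F_2\circ\Phi$, while $\Phi$ preserves the inner boundaries because $\overline\varphi$ sends the $L_1$-side to the $L_2$-side.

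I expect the main obstacle to be this inductive lifting together with the bookkeeping that lets it descend to the quotient. One must verify that $\overline\varphi$ genuinely respects every piece of the level-zero decomposition, in particular the distinguished preimage component $\Delta_1$ of $\Delta$ and the sets $L,M$ attached to the parabolic point, and then that the successive lifts remain mutually compatible across all the identifications $\sim$, so that no monodromy obstructs the descent to $T_i\cup T_i'$. Once the level-zero compatibility and the matching of the coverings $\rho_{n,i}$ are secured, the inductive step is forced by diagram~\eqref{g} and the conclusion follows from the uniformization-and-reflection argument of the first paragraph.
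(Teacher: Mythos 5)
Your overall strategy is the paper's: propagate $\overline{\varphi}$ up the towers of the general-case construction, descend to an isomorphism $\Phi\colon T_1\cup T_1'\to T_2\cup T_2'$ compatible with the dynamics, and conclude via the uniformizations $\alpha_i$ and reflection that the external maps are real-analytically conjugate (your first paragraph and your level-zero verifications are exactly what the paper does, partly implicitly). The one place you genuinely diverge is the inductive step, and it is where your argument has a weakness. The paper keeps \emph{both} towers fixed in advance (arbitrary admissible choices of $\rho_{n,i}$, $\pi_{n,i}$ for each $f_i$) and sets $\varphi_0=\overline{\varphi}$, $\varphi_n=\rho_{(n-1)2}^{-1}\circ\varphi_{n-1}\circ\rho_{(n-1)1}$, i.e.\ $\varphi_n$ is a suitably chosen lift of $\varphi_{n-1}\circ\rho_{(n-1)1}$ through the covering $\rho_{(n-1)2}$; the two commuting diagrams then give that $\varphi_n$ conjugates $f_{n1}$ to $f_{n2}$ and is compatible with the gluing maps, so the family descends to the quotients. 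You instead fix only the tower of $f_1$ and manufacture the tower of $f_2$ by pushing $\rho_{n,1}$ forward along $\Phi_n$. That makes all compatibilities tautological, but it is justified by appealing to the claim that the external class of $f_2$ is independent of the choice of tower --- and the paper never proves that independence separately. Indeed, within this paper that independence \emph{is} the special case $f_1=f_2$, $\overline{\varphi}=\mathrm{id}$, two different towers, of the very lemma you are proving, and its proof is precisely the double-lifting argument you avoided. So as written your argument is mildly circular: it shows that \emph{some} external map of $f_2$ (the one from the transported tower) is conjugate to $f_1$'s, which suffices only if well-definedness is granted as part of the framework. The fix is small and brings you back to the paper: run the induction with both towers given, defining $\Phi_{n+1}$ as the lift of $\Phi_n\circ\rho_{n,1}$ through $\rho_{n,2}$, choosing among the deck-transformation translates the lift satisfying $\Phi_{n+1}\circ\pi_{n,1}=\pi_{n,2}\circ\Phi_n$; this proves the statement for arbitrary towers and yields the independence of the external class as a byproduct rather than assuming it.
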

\begin{proof}
Let ($X_{ni},\rho_{(n-1)i},\pi_{(n-1)i},f_{n i})_{n \geq 1,\,i=1,2}$ be as
in the construction of an external map for a parabolic-like map with disconnected Julia set.
Let us set $\varphi_0=\overline{\varphi}$ and define recursively 
$\varphi_n=\rho_{(n-1)2}^{-1} \circ \varphi_{n-1} \circ \rho_{(n-1)1} : X_{n1} \rightarrow X_{n2}$.
Then the following diagram commutes:
\begin{equation}\label{g}\begin{CD}
X_{n1}' \subset X_{n1} @>\varphi_n>> X_{n2}\supset X'_{n2} \\
@VV \rho_{(n-1)1} V @VV\rho_{(n-1)2} V\\
X'_{(n-1) 1} @>\varphi_{n-1}>> X'_{(n-1) 2}
\end{CD}\end{equation}
(for $n=0$, $\rho_{0, i}: X_{1,i} \rightarrow X_{0,i} \subset X_{0,i}'$).
Then every $\varphi_n:X_{n1} \rightarrow X_{n2}$ thus defined is an isomorphism and a conjugacy between $f_{n 1}$ and
$f_{n 2}$, and the following diagram commutes:
 \begin{equation}\label{g}\begin{CD}
X_{n1}\supset A'_{n1}@>f_{n 1}>> B_{n1}\subset X_{n1}' \\
@VV \varphi_n V @VV\varphi_n V\\
X_{n2}\supset A'_{n2} @>f_{n 2}>> B_{n2}\subset X_{n2}'
\end{CD}\end{equation}
Thus the family of isomorphisms $\varphi_n$ induces an isomorphism
$\Phi: T_1 \cup T'_1 \rightarrow T_2 \cup T'_2$ compatible with dynamics, and so the external maps of $f_1$ and $f_2$
are real-analytically
conjugate. 
\end{proof}
\subsubsection{External map for the members of the family $Per_1(1)$}
The filled Julia set $K_P$ of a polynomial $P: \widehat\C \rightarrow \widehat\C$ is defined as the complement of the
basin of attraction of infinity, which is a completely invariant Fatou component.
For a degree $d$ rational map $R: \widehat\C \rightarrow \widehat\C$
with a completely invariant Fatou component $\Lambda$ we may define the filled Julia set as 
$$K_R= \widehat \C \setminus \Lambda.$$ 
Note that a degree $d$ map can have up to $2$ completely invariant Fatou components $\Lambda_1,\,\Lambda_2$ (since a
degree $d$ map defined on the Riemann sphere has $2d-2$ critical points, and a completely invariant Fatou component contains
at least $d-1$ critical points).
In the case $R$ has precisely $1$ completely invariant Fatou component $\Lambda$, the filled Julia set $K_R= \widehat \C
\setminus \Lambda$ is well defined. In the case $R$
has $2$ such components $\Lambda_1,\,\Lambda_2$, there are $2$ possibilities for the
filled Julia set, hence we need to make a choice. After choosing a completely invariant component $\Lambda_*$, the
filled Julia set $K_R=\widehat \C \setminus \Lambda_*$ is well defined. 

Every member of the family $Per_1(1)$ has a parabolic fixed point at $\infty$ with multiplier $1$, and the basin of attraction of the parabolic
fixed point is a completely invariant Fatou component. For all the members of the family $Per_1(1)$ with $A \neq 0$
the parabolic multiplicity of the parabolic fixed point is $1$, hence all these maps have precisely one completely invariant Fatou
component $\Lambda$. Thus for all the members of the family $Per_1(1)$ with $A \neq 0$ the filled Julia set
$K_{P_A}= \widehat \C \setminus \Lambda$ is well defined.
On the other hand, since for the map $P_0(z)= z+ 1/z$ the parabolic multiplicity of $\infty$ is $2$,
this map has 2 completely invariant Fatou components, namely $\H_r$ and $\H_l$. Since $P_0(z)=z+1/z$ is conformally
conjugate to the map $h_2(z)=\frac{3z^2+1}{3+z^2}$ under the map $\varphi(z)=\frac{z+1}{z-1}$,
for consistency with the Example 1 in Section \ref{ex} we consider
$K_{P_0}=\overline{\H}_l= \varphi(\overline{\D})$.

Let $f: \widehat{\C} \rightarrow \widehat{\C}$ be a rational map of degree $d$. The
map $f$ has a
\textit{parabolic-like restriction} if there exist open connected sets $U$ and $U'$ and a dividing
arc $\gamma$ such that ($f,U',U,\gamma$) is a parabolic-like map of some degree $d' \leq d$.
A parabolic-like restriction of a member $P_A$ of the family $Per_1(1)$ has degree $2$,
hence the filled Julia set $K_{P_A}$ defined as above coincides with the filled Julia set of the parabolic-like
restriction of $P_A$. Therefore, we consider as external class of $P_A$ the external class of its parabolic-like
restriction.

\begin{prop}\label{extmap}
For every $A \in \C$ the \textit{external class of $P_A$ } is given by the class of
$h_2(z)=\frac{z^2+\frac{1}{3}}{1+\frac{z^2}{3}}$.
\end{prop}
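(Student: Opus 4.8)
The plan is to reduce the whole computation to the single, completely explicit member $P_0$, read off its external class from the Möbius conjugacy $\varphi$, and then show that the external class does not depend on $A$. First I would treat $A=0$. The map $\varphi(z)=\frac{z+1}{z-1}$ conjugates $h_2$ to $P_0$ and, as recorded above, sends $\overline{\D}$ to $\overline{\H}_l=K_{P_0}$, hence $\widehat\C\setminus\overline{\D}$ to $\H_r=\widehat\C\setminus K_{P_0}$. Since $h_2$ is autoconjugate by the reflection $\tau(z)=1/\bar z$ and already preserves $\S^1$, the parabolic-like map of Example 1 in Section~\ref{ex} has filled Julia set $\overline{\D}$, the uniformizing Riemann map of its complement is the identity up to normalization, and the reflection step in the construction of Section~\ref{em} is automatic; thus an external map of $h_2$ is $h_2|_{\S^1}$ itself, so the external class of $h_2$ is $[h_2]$. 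Restricting $\varphi$ to $\H_r$ gives a biholomorphic conjugacy between the exterior dynamics of $h_2$ and of $P_0$ carrying dividing arcs to dividing arcs, so by the external equivalence criterion the external class of $P_0$ is $[h_2]$.

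The heart of the matter is then to show that every $P_A$ is externally equivalent to $P_0$. The external class of $P_A$ is carried by the dynamics of $P_A$ on $\Lambda=\widehat\C\setminus K_A$, the parabolic basin of the fixed point $\infty$; this component is completely invariant, contains a critical point, and $P_A$ restricts to a proper degree $2$ self-map of it. I would uniformize this action by Fatou coordinates: the parabolic point $\infty$ has multiplier $1$, its quotient \'Ecalle cylinder is conformally $\C/\Z$, and for $A\neq0$, where the parabolic multiplicity is $1$, the immediate basin carries exactly one critical point. The position of the critical value in the \'Ecalle cylinder is the sole conformal invariant of the pair (immediate basin, dynamics), and it is killed by the additive-constant freedom of the Fatou coordinate; hence the immediate basin with its dynamics is conformally conjugate to the model for every $A$. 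Pulling this conjugacy back along the dynamics extends it to the whole grand orbit $\Lambda$, and feeding the resulting biholomorphic conjugacy of exteriors into Lemma~\ref{lem} (in the connected case, into the criterion of Section~\ref{em}) produces an external equivalence between $P_A$ and $h_2$. Combined with the previous paragraph this yields external class $[h_2]$ for all $A$.

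The main obstacle is precisely this $A$-independence, and several technical points must be controlled for it to go through. One must match the normalization $\alpha(\gamma(t))\to1$ of the uniformizing Riemann map with the normalization chosen for the Fatou coordinate, so that the abstract conjugacy of basins descends to the genuine external map and not merely to a model; one must justify the passage from the immediate basin to the full completely invariant component $\Lambda$, and, when $K_A$ is disconnected, run the argument through the auxiliary surfaces $T,T'$ and the induced map $F$ of the general construction rather than through a single Riemann map. In addition the second critical point, which for $A\neq0$ lies in $K_A$, must be shown never to enter $\Lambda$, so that the degree-$2$ self-map of the basin indeed carries exactly one critical point.

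Finally I expect the degenerate parameter $A=0$, where the parabolic multiplicity jumps to $2$ and $P_0$ acquires two completely invariant components, to fall outside the uniform argument and to require the separate explicit treatment via $\varphi$ given in the first paragraph. It is worth noting that this is consistent: the reflection step in the construction of the external map doubles the number of petals, which is why the parabolic point of $h_2$ on $\S^1$ has multiplicity $2$ even though, for $A\neq0$, the internal parabolic multiplicity of $P_A$ is $1$.
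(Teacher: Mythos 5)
Your reduction to $P_0$ and your treatment of the connected case are sound and essentially coincide with the paper's own argument: the paper likewise starts from the conformal conjugacy between $h_2$ and $P_0$, defines $\eta=\phi_A^{-1}\circ\phi_0$ on an attracting petal with Fatou coordinates normalized at the critical values, and extends $\eta$ by iterated lifting to a conformal conjugacy $\widehat\C\setminus K_{P_0}\to\widehat\C\setminus K_{P_A}$ when $K_{P_A}$ is connected. The genuine gap is the disconnected case, and it is produced by a factual error in your final technical remark. You assert that the second critical point ``for $A\neq0$ lies in $K_A$'' and ``must be shown never to enter $\Lambda$''. Since $\Lambda$ is completely invariant, lying in $K_A=\widehat\C\setminus\Lambda$ and never entering $\Lambda$ are the same condition, and it is simply false for a large set of parameters: $K_A$ is disconnected \emph{precisely when} the second critical point also lies in $\Lambda$. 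For such $A$ your key premise -- that the immediate basin (which here is all of $\Lambda$) carries exactly one critical point, so that its dynamics is conformally conjugate to the model with the \'Ecalle position of the critical value as sole invariant -- fails: $\Lambda$ is then infinitely connected, contains both critical orbits, and the iterated lifting of $\eta$ is obstructed as soon as one reaches the equipotential through the second critical value; indeed no conjugacy of the full basins can exist, since one basin is a disk and the other is not simply connected. Gesturing at ``running the argument through $T,T'$ and $F$'' does not repair this: those surfaces are only the receptacle for building an external map, whereas what Lemma \ref{lem} requires as input is a biholomorphic conjugacy on an annular neighborhood $(U_1\cup U_1')\setminus L_1$ of the filled Julia set, and producing that conjugacy is exactly the missing work.

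That missing work is the bulk of the paper's proof. It chooses $r<r'<z_0$ and disks $\D(z_0,r)\subset\subset\D(z_0,r')$ in the Fatou coordinate plane so that the second critical value $-2+A$ is excluded from the relevant pullbacks, takes the parabolic-like restrictions $U_0,U_A$ to be complements of $\phi_0^{-1},\phi_A^{-1}$ of these disks together with dividing arcs $\psi_0(\widetilde\gamma_\pm)$, $\psi_A(\widetilde\gamma_\pm)$, lifts $\eta$ once through the degree $2$ coverings $P_0:D_0'\setminus\{1\}\to D_0\setminus\{2\}$ and $P_A:D_A'\setminus\{1\}\to D_A\setminus\{2+A\}$ (coverings exactly because the disks avoid the critical values), and glues with the conjugacy on $\Delta_0'$ to obtain a biholomorphic conjugacy on a fundamental annulus around each filled Julia set; Lemma \ref{lem} then converts this into an external equivalence. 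So your proposal is correct where $K_A$ is connected, but for disconnected $K_A$ it needs this fundamental-annulus construction in place of a nonexistent fact about the critical point.
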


\begin{proof}
Since the maps $P_0(z)=z+1/z$ and $h_2(z)=\frac{3z^2+1}{3+z^2}$ are conformally
conjugate, in order to prove that $h_2$ is an external map of $P_A$, it is sufficient to prove that $P_0$ 
is externally equivalent to $P_A$, for $A \in \C$.
Let $\Xi^0$ be an attracting petal of $P_0$ containing the critical value $z=2$, and let $\phi_0: \Xi^0 \rightarrow
\mathbb{H}_r$ be the incoming  Fatou coordinates of $P_0$ normalized by $\phi_0(2)=1$.
Replacing $A$ by $-A$ if necessary, we can assume that $z=1$ is the first critical point attracted by
$\infty$. Let
$\Xi^A$ be an attracting petal of $P_A$ and let $\phi_A: \Xi^A \rightarrow \mathbb{H}_r$ be the incoming  Fatou
coordinates of $P_A$ with $\phi_A(2+A)=1$.

Let us contruct an external equivalence between $P_0$ and $P_A$ first in the case $K_{P_A}$ is connected.
The map $\eta:=\phi_A^{-1} \circ \phi_0: \Xi^0 \rightarrow \Xi^A$ is a conformal
conjugacy between $P_0$ and $P_A$ on $\Xi^0$. Defining $\Xi^0_{-n},\, n>0$ as the connected component of 
$P_0^{-n}(\Xi^0)$ containing $\Xi^0$, and $\Xi^A_{-n},\, n>0$ as the connected component of
$P_A^{-n}(\Xi^A)$ containing $\Xi^A$, we can lift the map $\eta$ to $\eta_n: \Xi^0_{-n} \rightarrow
\Xi^A_{-n}$.
Since $K_{P_A}$ is connected by iterated lifting of $\eta$ we obtain a conformal conjugacy
$\overline{\eta}:\widehat{\C} \setminus
K_{P_0} \rightarrow \widehat{\C} \setminus K_{P_A}$ between $P_0$ and $P_A$.

In the case $K_{P_A}$ is not connected the map $\eta$ is a conformal
conjugacy between $P_0$ and $P_A$ on the region delimited by the Fatou equipotential passing through $z=1$.
\begin{figure}[hbt!]
\centering
\psfrag{D0'}{$D_0'$}
\psfrag{D'}{$D'$}
\psfrag{D}{$D$}
\psfrag{ga+}{$\gamma_{+_A}$}
\psfrag{ga-}{$\gamma_{-_A}$}
\psfrag{g0+}{$\gamma_{+_0}$}
\psfrag{g0-}{$\gamma_{-_0}$}
\psfrag{fg+}{$\tilde{\gamma}_+$}
\psfrag{fg-}{$\tilde{\gamma}_-$}
\psfrag{delta0}{$\Delta'_0$}
\psfrag{deltaa}{$\Delta'_A$}
\psfrag{U0'}{\small$(U_0')^c$}
\psfrag{U}{$\D(z_0,r')$}
\psfrag{D0}{$D_0$}
\psfrag{U0}{\small $U_0^c$}
\psfrag{DA'}{$D_A'$}
\psfrag{UA'}{\small$(U_A')^c$}
\psfrag{DA}{$D_A$}
\psfrag{UA}{\small$U_A^c$}
\psfrag{U'}{$T^{-1}(\D(z_0,r'))$}
\psfrag{2}{$2$}
\psfrag{1}{$1$}
\psfrag{2A}{\small $2+A$} 
\psfrag{-2A}{\small$-2+A$}
\psfrag{0}{$0$}
\psfrag{e}{$\eta$}
\psfrag{fi0}{$\phi_0$}
\psfrag{fiA}{$\phi_A$}
\psfrag{fi2A}{$\phi_A(-2+A)$}
\includegraphics[width= 15cm]{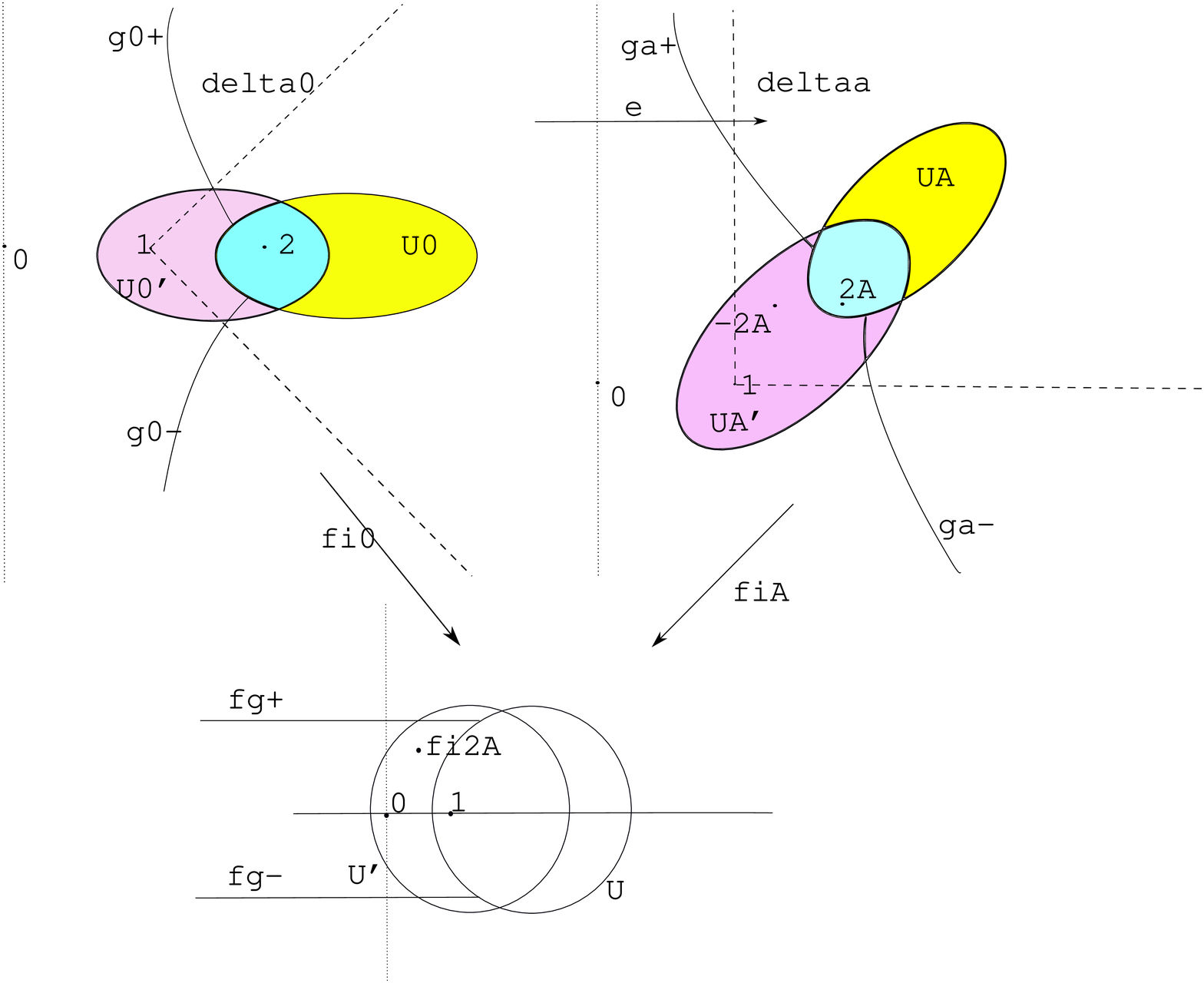}
\caption{\small The construction of parabolic-like restrictions of $P_0$ and $P_A$. In the picture we are assuming
the critical value $z=-2+A$ in
$\Omega_A \setminus \Omega'_A$. In this case the critical value $z=-2+A$ belongs to the attracting petal $\Xi_A$.}
\label{exteqnuovo1}
\end{figure}
We are now going to construct parabolic-like restrictions ($P_0,U'_0, U_0, \gamma_0$) and ($P_A,U'_A,
U_A,\gamma_A$) of the maps $P_0$ and $P_A$ respectively and extend the map 
$\eta$ to an external equivalence between them.
The critical point $z=1$ is the first attracted by infinity for both the maps $P_0$ and $P_A$, so it cannot
belong to the domains $U'_0,\,U'_A$ of their parabolic-like restrictions but it may belong to the
codomains $U_0,\,U_A$, while the critical point $z=-1$ belongs to $\Omega'_0$ and $\Omega'_A$.
Let us denote by $\widehat{\phi_A}$ and $\widehat{\phi_0}$ the Fatou coordinates of $P_A$ and $P_0$ respectively (normalized
by $\widehat{\phi_A}(2+A)=1$ and $\widehat{\phi_0}(2)=1$), extended to
the whole basin of attraction of $\infty$ by iterated lifting. 
The maps $\widehat{\phi_A}$ and $\widehat{\phi_0}$ have univalent inverse branches 
 \begin{displaymath}
      \psi_A: \C \setminus \{z=x+iy | x<0
\wedge y \in [0, Im\widehat{\phi_A}(-2+A) ]\} \rightarrow \widehat \Xi_A
  \end{displaymath}
and $\psi_0: \C \setminus \R_- \rightarrow
\widehat \Xi_0$ respectively, and the map $$\eta = \psi_A \circ \widehat \phi_0 : \psi_0^{-1}(\C \setminus \{z=x+iy | x<0
\wedge y \in [0, Im\widehat{\phi_A}(-2+A) ]\}) \rightarrow \widehat \Xi_A$$
is a biholomorphic extension of $\eta$ conjugating dynamics.
Choose $r> max \{ 1+ Im(\widehat{\phi_A}(A-2)),\,\, 2 \}$ and $z_0,\,\,r<z_0<r+1$ such
that $A-2 \notin  \phi_A^{-1}(\overline{\D(z_0,r)})$. Then for $r<r'<z_0$ with $r'$ sufficiently close to
$r$ we have $A-2
\notin \phi_A^{-1}(\D(z_0,r'))$.
Let $\widetilde{\gamma}_+$, $\widetilde{\gamma}_-$ be horizontal lines, symmetric with respect to the real axis, 
starting at $- \infty$ and landing at $\partial
\D(z_0,r)$, such that the point $\widehat{\phi_A}(A-2)$ is contained in the strip between them (see Fig.
\ref{exteqnuovo1}) and they do not leave the disk $T^{-1}(\D(z_0,r))$ (where $T^{-1}(\D(z_0,r))$ is the disk of radius $r$ and center
$z_1=z_0-1$) after having entered to it.
Define $U_0=(\phi_0^{-1}(\D(z_0,r))^c$, $U'_0=P_0^{-1}(U_0)$, $\gamma_{+_0}=\psi_0(\widetilde{\gamma}_+)$, and
$\gamma_{-_0}=\psi_0(\widetilde{\gamma}_-)$.
In the same way define $U_A=(\phi_A^{-1}(\D(z_0,r))^c$, $U'_A=P_A^{-1}(U_A)$, $
\gamma_{+_A}=\psi_A(\widetilde{\gamma}_+),$ and
$\gamma_{-_A}=\psi_A(\widetilde{\gamma}_-)$.
Then the parabolic-like restriction of $P_0$ we consider is
($P_0,U'_0, U_0, \gamma_{+_0}, \gamma_{-_0}$), and the parabolic-like 
restriction of $P_A$ we consider is
($P_A,U'_A, U_A, \gamma_{+_A}, \gamma_{-_A}$).
Note that, by construction, the map $\eta$ is a conformal conjugacy between $P_0$ and $P_A$ on $\Delta'_0$.
\begin{figure}[hbt!]
\centering
 \psfrag{D0'}{$D_0'$}
 \psfrag{D'}{$D'$}
 \psfrag{D}{$D$}
 \psfrag{ga+}{$\gamma_{+_A}$}
 \psfrag{ga-}{$\gamma_{-_A}$}
 \psfrag{g0+}{$\gamma_{+_0}$}
 \psfrag{g0-}{$\gamma_{-_0}$}
 \psfrag{fg+}{$\tilde{\gamma}_+$}
\psfrag{fg-}{$\tilde{\gamma}_-$}
\psfrag{delta0}{$\Delta'_0$}
 \psfrag{deltaa}{$\Delta'_A$}
 \psfrag{U0'}{\small $(U_0')^c$}
 \psfrag{U}{$\D(z_0,r)$}
 \psfrag{D0}{$D_0$}
 \psfrag{U0}{\small $U_0^c$}
 \psfrag{DA'}{$D_A'$}
 \psfrag{UA'}{\small $(U_A')^c$}
 \psfrag{DA}{$D_A$}
 \psfrag{UA}{\small $U_A^c$}
 \psfrag{U'}{$T^{-1}(\D(z_0,r))$}
 \psfrag{2}{$2$}
 \psfrag{1}{$1$}
 \psfrag{2A}{\small $2+A$}
 \psfrag{-2A}{\small $-2+A$}
 \psfrag{0}{$0$}
\psfrag{e}{$\eta$}
 \psfrag{fi0}{$\phi_0$}
 \psfrag{fiA}{$\phi_A$}
 \psfrag{fi2A}{$\phi_A(-2+A)$}
\includegraphics[width= 13cm]{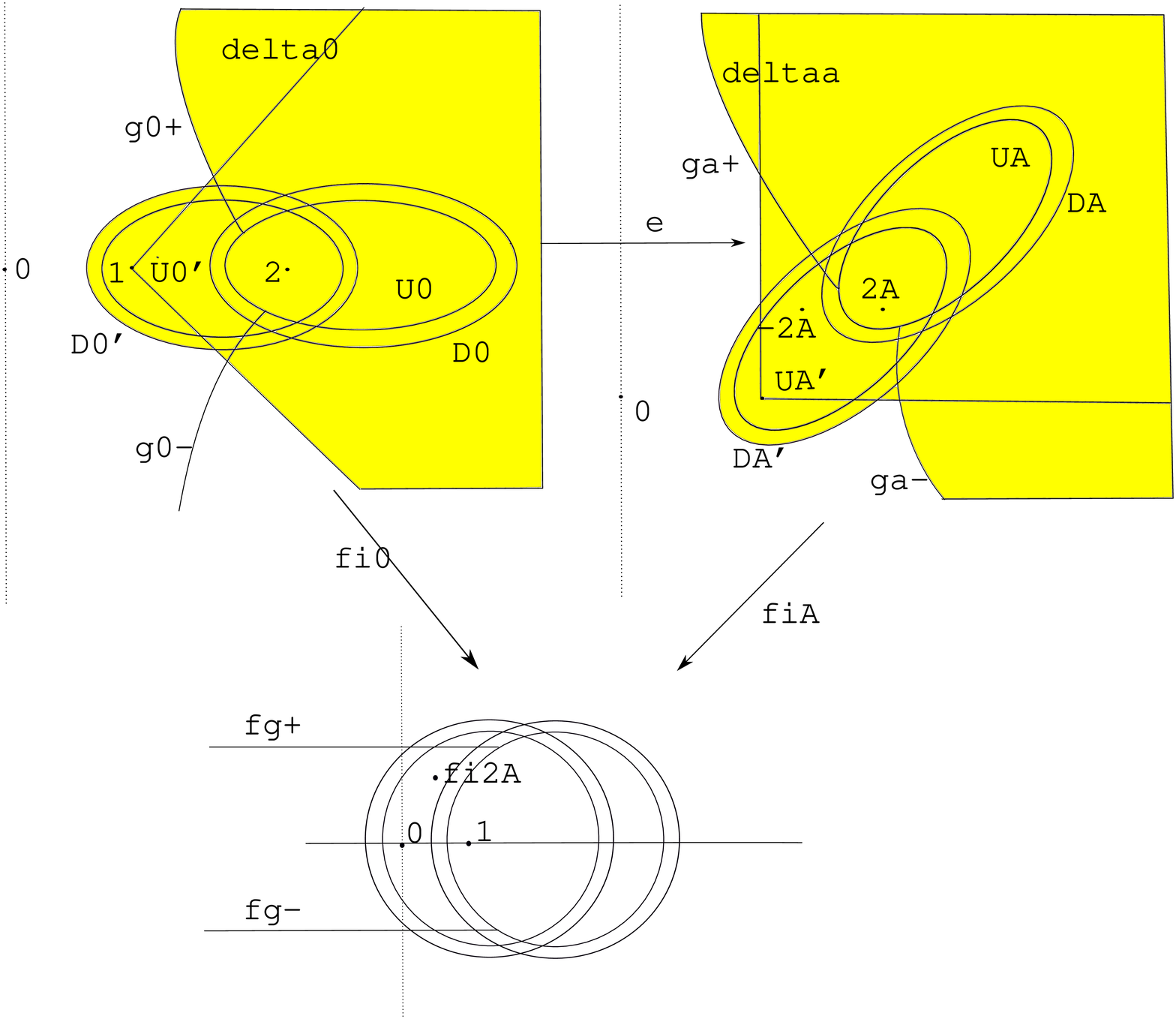}
\caption{\small The construction of the external equivalence $\eta$ between the parabolic-like restriction of $P_0$ and
 the parabolic-like restriction of $P_A$. For $r<r'$, $\D(z_0,r) \subset \subset \D(z_0,r')$ and $T^{-1}(\D(z_0,r)) \subset \subset T^{-1}(\D(z_0,r'))$.
 In the picture we are assuming the critical value $z=-2+A$ in
$\Omega_A \setminus \Omega'_A$.}
\label{exteqnuovo2}
\end{figure}
In order to obtain an external equivalence we need $\eta$ to be defined on a fundamental annulus.
Define $D_0=\phi_0^{-1}(\D(z_0,\,r'))$, $D'_0=P_0^{-1}(D_0)$, $D_A=\phi_A^{-1}(\D(z_0,\,r'))$,
and $D'_A=P_A^{-1}(D_A)$ (see Fig. \ref{exteqnuovo2}).
Since $D_0$ and $D_A$ belong to the regions delimited by the Fatou equipotential passing through $z=1$, the restriction
$\eta: D_0 \rightarrow D_A $  is a
holomorphic conjugacy between $P_0$ and $P_A$. Since $-2 \notin D_0$ and $-2+A \notin D_A$, the restrictions
$P_0:D'_0 \setminus \{1\} \rightarrow D_0 \setminus \{2\}$ and $P_A:D'_A \setminus \{1\}\rightarrow
D_A \setminus \{2+A\}$ are degree $2$ coverings.
Hence we can lift the map $\eta$ to $\eta: D'_0 \setminus \{1\} \rightarrow D'_A \setminus \{1\}$.
Finally, we obtain a biholomorphic map $\eta: D'_0 \cup \Delta'_0 \rightarrow D'_A
\cup \Delta'_A$ which conjugates dynamics.

Define $V_0=(\overline {D_0})^c$ and $V_A=(\overline {D_A})^c$, and consequently
$L=\overline{\Omega_0'\setminus D'_0}$ and $M=\overline{\Omega_A'\setminus D'_A}$.
The sets $V_0$ and $V_A$ are compactly contained in $U_0$ and $U_A$ respectively,
containing respectively $\overline{\Omega_0'}$ (which contains the critical value $-2$), and $\overline{\Omega_A'}$ and the critical
value $-2+A$, and such that $P_0: (\overline D'_0)^c \rightarrow (\overline D_0)^c$ and $P_A: (\overline D'_A)^c \rightarrow (\overline D_A)^c$ are parabolic-like
restrictions of ($P_0,U_0, U'_0, \gamma_0$) and ($P_A,U_A, U'_A, \gamma_A$)
respectively, and the map $\eta: (U_0 \cup
U_0')\setminus L \rightarrow (U_A \cup U_A')\setminus M$ is a biholomorphic conjugacy between $P_0$ and $P_A$.
Therefore the result follows by Lemma \ref{lem}.
\end{proof}

\subsection{Properties of external maps}\label{propex}

Let ($f,U',U,\gamma$) be a parabolic-like map of degree $d$, and let $h_f$ be a representative of its external class. 
The map $h_f: \S^1 \rightarrow \S^1$ is by construction real analytic,
symmetric with respect to the unit
circle, and it has a parabolic fixed point $z_1$ of multiplier $1$ and even parabolic multiplicity $2n$ (where
$n$ is
the
number of
petals 
of $z_0$ outside $K_f$). Let $\alpha$ be an isomorphism which defines $h_f$. Hence $h_f$ inherits via $\alpha$ dividing arcs $\gamma_{h_f +}:=
\alpha(\gamma_+ \setminus \{z_0\})\cup \{z_1\}$ and
$\gamma_{h_f-}:=\alpha(\gamma_-\setminus\{z_0\})\cup \{z_1\}$, which
divide $W_f'\setminus \D$ and $ W_f \setminus \D$ into $\Omega'_W, \Delta'_W$ and $\Omega_W,
\Delta_W$
respectively, such that $h_f:\Delta_W' \rightarrow \Delta_W$ is an isomorphism and
$\Delta_W'$ contains at least one attracting fixed petal of $z_1$. Note that $\Omega'_W$ is not compactly
contained in $W$ (since they share the inner boundary), and that $\gamma_{h_f
+}$ and $\gamma_{h_f -}$
form a positive angle (since there is at least one attracting fixed petal of
$z_1$ in $\Delta_W$. We prove in \cite{LL} that this angle is $\pi$).
Moreover, we prove in \cite{LL} (Theorem 2.3.3) that
there exists $\hat{h} \in [h_f]$
such that for all $z \in \S^1$, $|\hat h(z)| \geq 1$, and the equality holds only at the parabolic fixed point.

\section{Parabolic external maps}
So far we have considered external maps only in
relation
to parabolic-like maps (and members of the family $Per_1(1)$).
We now want to separate these two concepts, and then consider external maps
as maps of the unit circle to itself with some specific properties, without refering to a particular parabolic-like
map.
In order to do so we need to give an abstract definition of external map, which endows it with all the properties it
would have, if it would have been constructed from a parabolic-like map.

\begin{defi}\textbf{(Parabolic external map)\,\,\,}
Let $h:\S^1 \rightarrow \S^1$ be a degree $d$ orientation preserving real-analytic and metrically expanding (i.e. $|h'(z)| \geq 1$) 
map.
We say that
$h$ is a \textit{parabolic external map,} if there exists a
unique $z=z_*$ such that $h(z_*)=z_*$ and $h'(z_*)=1$, and $|h'(z)|>1$ for all $z \neq z_*$.
\end{defi}
The multiplicity of $z_*$ as parabolic fixed point of $h$ is even (since the map $h$
is symmmetric with respect
to the unit circle). As
$h$ is metrically expanding, the repelling petals of $z_*$ intersect the unit circle.
Let $h: W' \rightarrow W$ be an
extension
which is a degree $d$ covering (where $W=\{ z : 1/(1+\epsilon) < |z| < 1 + \epsilon \}$ for an $\epsilon >0$, and
$W'=h^{-1}(W)$). 
We define a \textit{dividing arc for $h$} to be an arc $\widetilde \gamma:[-1,1] \rightarrow \overline{W} \setminus \D$,
forward invariant under $h$, $C^1$ on $[-1,0]$ and $[0,-1]$, residing in the union of the repelling petals which intersect the unit circle and
such that
$$h(\widetilde\gamma(t))=\widetilde\gamma(dt),\,\,\, \forall -\frac{1}{d} \leq t \leq \frac{1}{d},$$
$$\widetilde\gamma([ \frac{1}{d}, 1)\cup (-1, -\frac{1}{d}]) \subseteq W \setminus W',\,\,\,\,\,\,\widetilde\gamma(\pm 1) \in \partial W.$$
\begin{remark}
 A dividing arc for a parabolic external map can be constructed by taking preimages of horizontal
lines by repelling Fatou coordinates 
with axis tangent to the unit circle at the parabolic fixed point. 
\end{remark}
The dividing arc divides $W'\setminus \D$ and $W\setminus \D$ into $\Omega'_W, \Delta'_W$ and $\Omega_W, \Delta_W$
respectively, such that $h:\Delta_W' \rightarrow \Delta_W$ is an isomorphism and
$\Delta_W'$ contains at least an attracting fixed petal of $z_*$. 
We prove in \cite{LL} (see Lemma 2.3.9)
that there exists a range $\widetilde W$ for an extension $h:\widetilde W' \rightarrow \widetilde W$ degree $d$ covering
such that $\Omega_{\widetilde W} \setminus \Omega'_{\widetilde W}$ is a
topological quadrilateral.
Therefore external maps contructed from parabolic-like mappings and parabolic external maps are equivalent concepts.
\begin{remarks}
\begin{itemize}
	\item For clarity of exposition we consider in this paper parabolic-like maps with external map having
exactly one parabolic fixed point (cfr. definition \ref{definitionparlikemap}). This concept naturally generalizes to
maps with external maps having several parabolic fixed points. A
general parabolic-like map has as many pairs of dividing arcs $\gamma_{\pm}$
(which divide $U$ and $U'$ in $\Omega,\Delta_1, \Delta_2$,...,$\Delta_n$ and $\Omega',\Delta_1',
\Delta'_2$,...,$\Delta'_n$ respectively) as the number of parabolic fixed points.  
	\item Moreover this concept generalizes in a similar way to maps with external maps having several parabolic
periodic orbits. An external map for such an object is an orientation preserving real-analytic and metrically expanding
map $h:\S^1 \rightarrow \S^1$ with $h'(z_*)=1$ for every $z_*$ belonging to a parabolic cycle and $|h'(z)|>1$ for all
the other points of the unit circle.
 \end{itemize}
\end{remarks}

\begin{defi}
A \textit{degree $d$ covering extension} $h: W' \rightarrow W$ of a parabolic
external map $h : \S^1 \rightarrow \S^1$ is an extension to some neighborhood $W=\{ z : e^{-\epsilon} < |z| <
e^{\epsilon}\}$ for an $\epsilon >0$, and $W'=h^{-1}(W)$ such that the map $h : W'
\rightarrow
W$ is a degree $d$ covering
and
there exists a dividing arc $\widetilde\gamma$ which divides
$W'\setminus \D,\,\, W\setminus \D$ into $\Omega'_W, \Delta'_W$ and $\Omega_W, \Delta_W$
respectively, such that $\Omega_W \setminus
\Omega'_W$ is a
topological quadrilateral. 
\end{defi}
The concept of \textit{parabolic-like restriction} naturally applies to parabolic external maps 
($h: \hat W' \rightarrow \hat W$ is a parabolic-like restriction of $h: W' \rightarrow W$ if they are both
degree $d$ covering extension of the same parabolic external map and
$\hat W \subseteq W$). Let $\gamma$ be a dividing arc for some parabolic external map.
We say that $\gamma_s:[-1,1] \rightarrow \C \setminus \D$ is \textit{isotopic}
to $\gamma$
if their projections to \'Ecalle cylinders are isotopic and the isotopies are
disjoint from the projections of the unit circle. From the definitions of dividing arc and isotopy of arcs
for parabolic external maps is easy to see that dividing arcs for the same external map are isotopic. 

\begin{prop}\label{arcprop}
 Let $h_i: \S^1 \rightarrow \S^1,\,\,i=1,2$ be parabolic external maps of the same degree $d$, $h_i:
W_i' \rightarrow W_i$ degree $d$ covering extensions, and $\gamma_{i}$ dividing arcs. Then the following statements hold.
\begin{enumerate}
\item Let $\gamma_s:[-1,1] \rightarrow \C \setminus \D$ be a foward invariant arc for $h_1$,
isotopic to $\gamma_1$, and $C^1$ on $[-1,0]$ and $[0,1]$. Then (possibly after rescaling)
$\gamma_s$ is a dividing arc for $h_1$.
\item  Assume $\gamma_{i +}$ and $\gamma_{i -}$ are constructed by taking preimages of the same periodic curves 
by repelling Fatou coordinates $\phi_{i +}$ and $\phi_{i -}$
(with axis tangent to the unit circle at the parabolic fixed point) of $h_i$. Then the map $\phi_{2
}^{-1} \circ \phi_{1}:\gamma_{1} \rightarrow \gamma_{2 }$ defined as:
$$ \phi_{2}^{-1} \circ \phi_{1}(z)=\left\{
\begin{array}{cl}
\phi_{2 +}^{-1} \circ \phi_{1 +} &\mbox{on  } \gamma_{1 +}\\
\phi_{2 -}^{-1} \circ \phi_{1 -} &\mbox{on  } \gamma_{1 -}\\
\end{array}\right.
$$  
is a quasisymmetric conjugacy between $h_{1_|\gamma_1}$ and $h_{2_|\gamma_2}$.

\end{enumerate}

\end{prop}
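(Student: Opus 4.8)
The plan is to treat the two parts separately, since Part 1 is essentially a verification of the clauses in the definition of a dividing arc, whereas Part 2 carries the real-analytic content.

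For Part 1 I would check each defining property for $\gamma_s$. Being $C^1$ on $[-1,0]$ and $[0,1]$ and forward invariant under $h_1$ are hypotheses. Since $\gamma_s$ is isotopic to $\gamma_1$ with the isotopy of the projections to the \'Ecalle cylinders disjoint from the projection of $\S^1$, the argument of the remark following Lemma \ref{is} shows that $\gamma_{s}$ resides in the same repelling petals as $\gamma_1$; by definition of a parabolic external map these petals meet $\S^1$. It then remains to produce the functional relation $h_1(\gamma_s(t))=\gamma_s(dt)$ and the boundary conditions, and this is precisely the asserted rescaling. I would pass to a repelling Fatou coordinate $\phi$ of the petal, in which $\phi\circ h_1=\phi+1$; a forward invariant $C^1$ arc lifts to a curve invariant under $w\mapsto w+1$, so after reparametrising $\gamma_s(t)=\phi^{-1}(c(\log_d|t|))$ with $c(s+1)=c(s)+1$ one obtains $h_1(\gamma_s(t))=\phi^{-1}(c(\log_d|t|)+1)=\phi^{-1}(c(\log_d|dt|))=\gamma_s(dt)$ for $|t|\le 1/d$. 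Finally I would fix the covering extension $h_1:W'\to W$ (equivalently the defining $\epsilon$) so that $\gamma_s(\pm1)\in\partial W$; then $\gamma_s([1/d,1))$ is the $h_1$-image of an inner fundamental piece and lands in $W\setminus W'$ by properness, and likewise for the negative tail. This exhibits $\gamma_s$ as a dividing arc for $h_1$.

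For Part 2 I would first check that the piecewise map is a conjugacy and then that it is quasisymmetric. The repelling Fatou coordinates satisfy $\phi_{i\pm}\circ h_i=\phi_{i\pm}+1$, so on each half $\phi_{2\pm}^{-1}\circ\phi_{1\pm}$ intertwines $h_1$ with $h_2$; as $\gamma_{i\pm}$ are the preimages under $\phi_{i\pm}$ of the \emph{same} periodic curves, this map carries $\gamma_{1\pm}$ homeomorphically onto $\gamma_{2\pm}$, and the two branches agree at the parabolic fixed point (both send $z_{*1}$ to $z_{*2}$). Thus $\phi_2^{-1}\circ\phi_1$ is a homeomorphism $\gamma_1\to\gamma_2$ conjugating $h_1|_{\gamma_1}$ to $h_2|_{\gamma_2}$. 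Away from the parabolic point the map is a composition of conformal Fatou coordinates, hence real analytic and a diffeomorphism of arcs, so it is quasisymmetric on every compact subarc disjoint from $z_{*1}$; quasisymmetry can therefore only fail at $z_{*1}$.

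The crux, and the step I expect to be the main obstacle, is quasisymmetry at the parabolic fixed point, where the Fatou coordinates blow up and the dynamics ceases to be expanding. Here I would invoke the standard asymptotics of repelling Fatou coordinates at a parabolic point of even multiplicity $2n_i$: in a suitable local coordinate $\phi_{i\pm}(z)\sim c_i\,(z-z_{*i})^{-2n_i}$ up to lower-order (including logarithmic) terms, so $\phi_{i\pm}^{-1}(w)-z_{*i}\sim (c_i/w)^{1/2n_i}$. Consequently, as $z\to z_{*1}$ along $\gamma_{1\pm}$, the conjugacy is tangent to a map of the form $(z-z_{*1})\mapsto \mathrm{const}\cdot(z-z_{*1})^{n_1/n_2}$. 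Since the power maps $x\mapsto x^{\alpha}$ are quasisymmetric on $[0,\epsilon)$ for every $\alpha>0$, and the subdominant and logarithmic corrections only alter the map by factors of bounded distortion, the conjugacy is quasisymmetric up to and including $z_{*1}$. Using that the two half-arcs meet at $z_{*1}$ at angle $\pi$, so that $\gamma_1$ is $C^1$ through $z_{*1}$ and $\gamma_2$ through $z_{*2}$ (cf. \cite{LL}), the estimates from the two sides match across the junction, and together with quasisymmetry on compact subarcs this yields quasisymmetry of $\phi_2^{-1}\circ\phi_1$ on all of $\gamma_1$.
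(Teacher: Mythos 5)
Your overall strategy coincides with the paper's: Part 1 is deduced from the definition of isotopy together with the argument of Lemma \ref{is} (which is all the paper itself does), and your Part 2 rests on the same three ingredients the paper uses — Shishikura's asymptotics of repelling Fatou coordinates near the parabolic point, quasisymmetry of power maps, and the angle-$\pi$ matching of the two half-arcs at the junction. Part 1 and the reduction of Part 2 to quasisymmetry at the parabolic fixed point are fine.

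There is, however, a genuine gap at exactly the step you call the crux. From the value asymptotics $\phi_{i\pm}(z)\sim c_i(z-z_{*i})^{-2n_i}$ you infer that the conjugacy is ``tangent to'' $z\mapsto \mathrm{const}\cdot(z-z_{*1})^{n_1/n_2}$ and that ``the subdominant and logarithmic corrections only alter the map by factors of bounded distortion,'' hence quasisymmetric. This inference is not valid as stated: quasisymmetry concerns ratios of differences at all scales and is not stable under perturbations that are merely lower order than the leading term (a homeomorphism whose values are comparable to those of a quasisymmetric map need not be quasisymmetric); moreover the logarithmic corrections here are unbounded — they tend to $\infty$, only more slowly than the main term — so they are not literally bounded factors. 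What is missing is an actual factorization of $\phi_2^{-1}\circ\phi_1$ into maps each of which is quasisymmetric, and this is precisely the content of the paper's proof. Writing $\phi_{i\pm}=\Phi_{i\pm}\circ I_i$ with $I_i(z)=-\frac{1}{2n_iz^{2n_i}}$ and $\Phi_{i\pm}(w)=w-\hat c_i\log(w)+c_{i\pm}+o(1)$, the paper defines $\widehat I_i$ as $I_i$ on $\gamma_{i+}$ and $-I_i$ on $\gamma_{i-}$ (the sign flip is how the angle-$\pi$ junction at $z_{*i}$ becomes a single arc through $\infty$; this piecewise power map is the quasisymmetric ingredient), defines $\widehat\Phi_i$ correspondingly, and obtains $\phi_2^{-1}\circ\phi_1=\widehat I_2^{-1}\circ\bigl(\widehat\Phi_2^{-1}\circ\widehat\Phi_1\bigr)\circ\widehat I_1$. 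The middle map is then shown to be a diffeomorphism across $\infty$ in the chart $1/x$, and this uses Shishikura's \emph{derivative} estimate $\Phi_{i\pm}'=1+o(1)$ from \cite{Sh} — information which your leading-order value asymptotics alone do not supply. Quasisymmetry then follows because compositions of quasisymmetric maps are quasisymmetric. To close your argument you would need to (i) invoke the derivative asymptotics of the Fatou coordinates, not only the values, and (ii) replace ``tangent to a power map plus lower-order corrections'' by such an explicit decomposition into a power-type quasisymmetric map composed with a genuine diffeomorphism in the chart $1/x$.
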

\begin{proof}
Property $1$ comes from the definition of isotopy in the parabolic external maps setting and a similar argument as in the proof of Lemma \ref{is}.
Let us prove Property $2$. In the following, $i=1,2$.
Let $\Xi_{i +}$ and $\Xi_{i -}$ be repelling petals where $\gamma_{i +}$ and $\gamma_{i -}$ respectively reside,
and let $\phi_{i +}: \Xi_{i +} \rightarrow \H_l$ and $\phi_{i -}: \Xi_{i -} \rightarrow \H_l$ be repelling Fatou coordinates
with axis tangent to the unit circle at the parabolic fixed point $z_i$ of $h_i$. Then there exist
$\gamma_+$ and $\gamma_-$, $1$-periodic curves in $\H_l$ bounded from above and below, such that $\gamma_{i +}= \phi_{i +}^{-1}(\gamma_+)$
and $\gamma_{i -}= \phi_{i -}^{-1}(\gamma_-)$.
The map $\phi_{2}^{-1} \circ \phi_{1}:\gamma_{1} \rightarrow \gamma_{2 }$
is clearly a conjugacy between $h_{1_|\gamma_1}$ and $h_{2_|\gamma_2}$. Let us prove that this map is quasisymmetric.
To fix the notation let us assume the multiplicity of $z_i$ as parabolic fixed point of $h_i$ is $2n_i$.
By an iterative local change of coordinates applied to eliminate lower order terms one by one, we obtain conformal
diffeomorphisms $g_i$ which
conjugate $h_i$ to the map $z \rightarrow z(1+z^{2n_i}\,+\,cz^{4n_i}+ \, O (z^{6n_i}))$ on $\Xi_{i \pm}$. Since the
forward invariant arcs
$\gamma_{i \pm}$ reside in the repelling petals $\Xi_{i \pm}$, it suffices to consider
$h_i(z)=z(1+z^{2n_i}\,+\,cz^{4n_i}+ \,
O
(z^{6n_i}))$.
The map
$I_i(z)=-\frac{1}{2n_iz^{2n_i}}$ conjugates $h_i$ to $h_i^*(z)=\, z \, +\, 1 \,+ \, \hat{c_i} \,
 \frac{1}{z}\, +\, O(\frac{1}{z^2})$. Shishikura proved in \cite{Sh} that Fatou coordinates which conjugate the map
$h_i^*$ to $T(z)=\,z\,+\,1$ on $I_i(\Xi_{i \pm})$ take the form $\Phi_{i \pm}(z) \,=\, z\, - \,\hat{c_i}\,
log(z)\, + \,c_{i \pm} \,+ \,o(1)$.
Therefore $\phi_{i \pm}= \Phi_{i \pm} \circ I_i$, and
we can write:
$$\gamma_{i +}=(\Phi_{i +} \circ I_i)^{-1}(\gamma_+)_{\{i=1,2\}},$$ 
$$\gamma_{i -}=(\Phi_{i -} \circ I_i)^{-1}(\gamma_-)_{\{i=1,2\}} .$$ 
\begin{equation}\label{g}\begin{CD}
\gamma_{i} @>h_i >> \gamma_{i} \\
@VV I_i V @VV I_i V\\
\H_l @>h_i^*>> \H_l\\
@VV \Phi_i V @VV \Phi_i V\\
\H_l @>T>> \H_l
\end{CD}\end{equation}
Call $\gamma^*_{i +}= I_i(\gamma_{i +})$, $\gamma^*_{i -}= -I_i(\gamma_{i -})$ and $\gamma^*_{i}=\gamma^*_{i
+} \cup \infty \cup \gamma^*_{i -}$.
The map $\widehat{I_i}: \gamma_{i} \rightarrow \gamma^*_{i}$:
$$ \widehat{I_i}(z)=\left\{
\begin{array}{cl}
I_i(z) &\mbox{on  } \gamma_{i +}\\
-I_i(z) &\mbox{on  } \gamma_{i -}\\
\end{array}\right.
$$  
is quasisymmetric on a neighborhood of $0$.
Define $\widehat \gamma= \gamma_+ \cup \infty \cup -\gamma_-$, and the map $\widehat \Phi_i: \gamma^*_{i} \rightarrow \widehat\gamma$ as
follows:
$$\widehat \Phi_i(z)=\left\{
\begin{array}{cl}
\Phi_{i +}(z) &\mbox{on  } \gamma^*_{i +}\\
- \Phi_{i -}(-z) &\mbox{on  } \gamma^*_{i -}\\
\end{array}\right.
$$  
The map $\widehat \Phi_i$ is the restriction to $\gamma^*_{i}\setminus \infty$ of a conformal map. Again by Shishikura
\cite{Sh} the maps
$\Phi_{i
+},\,\Phi_{i -}$ have derivatives $\Phi'_{i \pm}=1+o(1)$, hence the map $\widehat \Phi_i: \gamma^*_{i} \rightarrow \widehat\gamma$
is a diffeomorphism (one may take $1/x$ as a chart). 
The map $\widehat \Phi_i \circ \widehat{I_i}: \gamma_i \rightarrow \widehat\gamma$ conjugates the map $h_i$ to the map $T_+(z)=\,z\,+\,1$ on $\gamma_{i +}$, and to
the map $T_-(z)=\,z\,-\,1$ on $\gamma_{i -}$. Hence $\phi_2^{-1}\circ\phi_{1}=(\widehat \Phi_2 \circ \widehat{I_2})^{-1} \circ
(\widehat \Phi_1 \circ \widehat{I_1}): \gamma_{1} \rightarrow \gamma_2$. The map $\widehat \Phi_2^{-1}$ is
a diffeomorphism because it has the same analytic expression as $\widehat \Phi_2$, and therefore the map 
$\widehat\Phi_2^{-1}\circ \widehat\Phi_1$ is a diffeomorphism. Since the map $\widehat{I_i}$ is quasisymmetric on a
neighborhood of $0$, the inverses are quasisymmetric on a neighborhood of $\infty$. Hence the composition
$\phi_2^{-1}\circ\phi_{1}=\widehat{I_2}^{-1} \circ \widehat\Phi_2^{-1}\circ \widehat\Phi_1 \circ\widehat{I_1} : \gamma_{1} \rightarrow
\gamma_2$ is quasisymmetric.
\end{proof}

\section{The Straightening Theorem}

\begin{defi}
Let ($f,U',U,\gamma_{f}$) and ($g,V',V,\gamma_{g}$) be two parabolic-like mappings.
We say that $f$ and $g$ are \textit{holomorphically equivalent} if there exist parabolic-like restrictions 
($f,A',A,\gamma_{f}$) and ($g,B',B,\gamma_{g}$), and a biholomorphic map $\varphi :(A \cup A')
\rightarrow (B \cup B')$
such
that
$\varphi(\gamma_{\pm f})=\gamma_{\pm g}$ and
$$\varphi(f(z))=g(\varphi(z))\,\,\,\,\mbox{ on } A'$$
\end{defi}

\begin{prop}\label{stex}
 A degree $2$ parabolic-like map is holomorphically conjugate to a member of the family $Per_1(1)$
 if and only if its external class is given by the class of $h_2$. 
\end{prop}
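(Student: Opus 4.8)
The plan is to treat the two implications separately; the forward direction is immediate and the reverse carries all the content. First I would dispose of the ``only if'' direction. Suppose $f$ is holomorphically equivalent to some $P_A \in Per_1(1)$, so that there are parabolic-like restrictions and a biholomorphic map $\varphi$ with $\varphi \circ f = P_A \circ \varphi$ on the corresponding $A'$ and $\varphi$ carrying the dividing arcs of $f$ to those of $P_A$. Restricting $\varphi$ to the complement of the filled Julia set exhibits it as an external equivalence between $f$ and $P_A$; hence $f$ and $P_A$ have the same external class, which by Proposition \ref{extmap} is $[h_2]$. Thus the external class of $f$ is $[h_2]$.

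For the ``if'' direction, assume the external class of $f$ is $[h_2]$. Since by Proposition \ref{extmap} every member of $Per_1(1)$ has external class $[h_2]$ as well, $f$ is externally equivalent to the model map $P_0(z) = z + 1/z$. By the characterization of external equivalence in the connected case (Section \ref{em}) and by Lemma \ref{lem} when $K_f$ is disconnected, this equivalence is realized by a \emph{biholomorphic} conjugacy $\psi$, defined on an annular neighborhood of the outer boundary, carrying $f$ to $P_0$ and the dividing arcs of $f$ to those of $P_0$. The crucial feature is that, because the two external classes coincide \emph{exactly} (not merely up to quasiconformal distortion), $\psi$ is conformal; this is precisely what will upgrade the conclusion from a hybrid to a genuinely holomorphic equivalence, so that, in contrast with Theorem \ref{thm}, no appeal to the Measurable Riemann Mapping Theorem is needed.

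Next I would perform a conformal gluing. I would form a surface $\widehat S$ by gluing a neighborhood of $K_f$, carrying the dynamics of $f$, to the exterior $\widehat{\C} \setminus K_{P_0}$, carrying the dynamics of $P_0$, identifying the two along their annular overlap by means of $\psi$. Because $\psi$ is biholomorphic and conjugates $f$ to $P_0$, the identification endows $\widehat S$ with a well-defined complex structure, and the two maps glue to a single holomorphic map $G$ of degree $2$; moreover the parabolic fixed point of $f$ is identified with that of $P_0$, so $G$ acquires a parabolic fixed point of multiplier $1$. Topologically $\widehat S$ is the union of two full pieces along an annulus, hence a compact genus-$0$ surface, so by uniformization $\widehat S$ is conformally $\widehat{\C}$. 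Transporting $G$ through this isomorphism yields a degree $2$ rational map with a parabolic fixed point of multiplier $1$; normalizing the parabolic point to $\infty$ and the two critical points to $\pm 1$ identifies it with a member $P_A$ of $Per_1(1)$. Restricting the uniformizing isomorphism to the image of the inside piece then gives a holomorphic map $\varphi$ conjugating $f$ to $P_A$ on a neighborhood of $K_f$; passing to suitable parabolic-like restrictions and checking that $\varphi$ respects the dividing arcs produces the desired holomorphic equivalence.

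The hard part will be the gluing across the parabolic fixed point. Unlike the polynomial-like situation, where $K_f$ is surrounded by a genuine annulus on which the dynamics is expanding, here the parabolic point $z_0$ sits on the seam: its attracting petals lie inside $\Delta'$ while its repelling petals carry the arcs $\gamma_\pm$ along which the inside and outside pieces are joined, so the gluing region abuts $K_f$ at $z_0$. I must verify that $\psi$ matches the two local parabolic models along $\gamma_\pm$ closely enough that the glued complex structure is genuinely analytic at $z_0$ and $G$ is holomorphic there, not merely continuous. This is exactly where the properties of external maps from Section \ref{propex} enter---the symmetry with respect to $\S^1$, the normal form of the Fatou coordinates, and the analytic matching of the dividing arcs established in Proposition \ref{arcprop}---guaranteeing that the conformal gluing closes up smoothly at the parabolic point. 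Once this is secured, the verifications that $\widehat S$ is compact of genus $0$ and that $G$ is proper of degree $2$ are routine.
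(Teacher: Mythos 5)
Your proposal is correct and is essentially the paper's own argument: the paper glues $(V\cup V')$ to $(\widehat{\C}\setminus\overline{\D},\,h_2)$ along an external equivalence, invokes uniformization of the resulting genus-zero surface, and normalizes by a M\"obius map (parabolic point to $\infty$, critical point to $-1$, preimage of the parabolic point to $0$) to land in $Per_1(1)$; your gluing against $(\widehat{\C}\setminus K_{P_0},\,P_0)$ is the same construction, since $P_0$ on $\widehat{\C}\setminus K_{P_0}$ is conformally conjugate to $h_2$ on $\widehat{\C}\setminus\overline{\D}$ via $z\mapsto\frac{z+1}{z-1}$. The only point to correct is your final paragraph: there is no delicate matching at the parabolic point, because the identification is made only on the open annular overlap, which is disjoint from $K_f$ (hence from $z_0$), so near $z_0$ the glued surface carries the complex structure of the inner chart alone and the glued map equals $f$ there --- holomorphy at $z_0$ is automatic, which is why the paper's proof contains no such verification and why Proposition \ref{arcprop} (a quasisymmetry statement) is needed only later, in the surgery of Theorem \ref{thm}, not here.
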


\begin{proof}
By Proposition \ref{extmap}, the external class of every member of the family $Per_1(1)$ is given by the 
class of $h_2$, hence a parabolic-like map holomorphically conjugate to a member of the family $Per_1(1)$
has external map in the class of $h_2$.
Let us prove that a degree $2$ parabolic-like map $g: V' \rightarrow V$
with external map $h_2$ is holomorphically conjugate to a member of the family $Per_1(1)$.
Let $\overline{\psi}$ be an external equivalence between
the maps $g$ and $h_2$.
Let $S$ be the Riemann surface obtained by gluing 
$ V \cup V'$ and $\widehat{\C} \setminus \overline{\D}$, by the equivalence relation identifying $z$ to
$\overline{\psi}(z)$, i.e.
$$S= (V \cup V') \coprod (\widehat{\C} \setminus \overline{\D}) / z \sim \overline{\psi}(z).$$ 
By the Uniformization Theorem, $S$ is isomorphic to the Riemann sphere.
Consider the map 
$$ \widetilde{g}(z)=\left\{
\begin{array}{cl}
g &\mbox{on  } V'\\
h_2 &\mbox{on  }  \widehat{\C} \setminus \overline{\D}\\
\end{array}\right.
$$
Since the map $h_2$ is an external map of $g$, the
map $\tilde{g}$ is holomorphic. Let $\widehat{\varphi}: S \rightarrow \widehat{\C}$ be an isomorphisim that sends the parabolic fixed
point of $\widetilde{g}$ to
infinity, the critical point of  $\widetilde{g}$  to $z=-1$, and the preimage of the parabolic fixed point of
$\widetilde{g}$ to $z=0$. Define $P_2 = \widehat{\varphi} \circ \tilde{g} \circ
\widehat{\varphi}^{-1} : \widehat{\C} \rightarrow \widehat{\C}$. The map $P_2$ is a degree $2$
holomorphic map defined on the Riemann sphere, so it is a quadratic rational function.
By construction it has a parabolic fixed point of multiplier $1$ at $z=\infty$ with preimage
$z=0$, and it has a critical point at $z=-1$.
Hence $P_2$ belongs to the family $Per_1(1)$.
\end{proof}

\begin{teor}\label{thm}
Let ($f, U', U, \gamma_f$) be a parabolic-like mapping of some degree $d>1$, and
 $h: \S^1 \rightarrow \S^1$ be a parabolic external map of the same degree $d$. Then there exists a
parabolic-like mapping ($g, V', V, \gamma_g$) which is hybrid equivalent to $f$ and whose external class is
$[h]$.
\end{teor}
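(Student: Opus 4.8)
The plan is to prove the theorem by quasiconformal surgery, gluing the internal dynamics of $f$ to the prescribed external dynamics of $h$ and then integrating a $G$-invariant almost complex structure by the Measurable Riemann Mapping Theorem. First I would fix a degree $d$ covering extension $h:W'\rightarrow W$ of the parabolic external map together with a dividing arc $\widetilde\gamma$, and replace $f$ by a parabolic-like restriction for which $U\setminus\overline{\Omega'}$ is an annular collar adapted to $\gamma_f$ (using Lemma \ref{is} to move the dividing arc if necessary); the analogous collar $\Omega_W\setminus\Omega'_W$ of $h$ is a topological quadrilateral by Lemma $2.3.9$ of \cite{LL}. Using the external uniformization of $f$ (the map $\alpha$ of Section \ref{em} when $K_f$ is connected, and the surface $T\cup T'$ with its covering $F$ otherwise) I would present the external dynamics of $f$ as a degree $d$ covering $h_f$ on a collar of $\S^1$, so that both $f$ and $h$ appear as degree $d$ coverings of comparable annuli sharing the circle $\S^1$.

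Next I would build a quasiconformal identification of the outer boundary of the $f$-collar with the inner boundary of the $h$-collar. The crucial input is Proposition \ref{arcprop}: choosing the dividing arcs of $h_f$ and of $h$ as preimages of the \emph{same} periodic curves under repelling Fatou coordinates, part $(2)$ produces a quasisymmetric conjugacy $\chi$ between $h_f|_{\gamma_f}$ and $h|_{\widetilde\gamma}$, while part $(1)$ guarantees that the forward-invariant arcs I obtain are again dividing arcs. Extending $\chi$ across one fundamental annulus by means of the expanding structure away from the parabolic point, I get a quasiconformal map conjugating the two coverings along a single fundamental domain. Combining $f$ near $K_f$, $h$ on the exterior collar, and this interpolation on the gluing annulus yields a quasiregular parabolic-like map $(G,V_G',V_G,\gamma_G)$ of degree $d$, holomorphic off the gluing annulus, equal to $f$ near $K_f$ and to $h$ on its exterior collar, and carrying a parabolic fixed point with the petal and dividing-arc structure inherited from $f$.

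Then I would define a $G$-invariant bounded almost complex structure $\sigma$ on $V_G\cup V_G'$: set $\sigma=\sigma_0$ on the new filled Julia set and on the whole exterior region where $G=h$, let $\sigma=G^*\sigma_0$ on the gluing annulus, and propagate it to the iterated inner preimages by $\sigma=(G^{k})^*\sigma_0$; this makes $G$ holomorphic for $\sigma$ everywhere. Since the gluing annulus is a fundamental domain for the escaping dynamics, every orbit meets it at most once, so $\sigma$ has bounded dilatation. Extending $\sigma$ by $\sigma_0$ to $\widehat\C$ and applying the Measurable Riemann Mapping Theorem gives a quasiconformal $\varphi$ with $\varphi^*\sigma_0=\sigma$; setting $g=\varphi\circ G\circ\varphi^{-1}$, $V'=\varphi(V_G')$, $V=\varphi(V_G)$, $\gamma_g=\varphi(\gamma_G)$, the map $g$ is holomorphic, hence a genuine parabolic-like map. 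As $\sigma=\sigma_0$ on $K_f$, the map $\varphi$ is a hybrid equivalence between $f$ and $g$; and as $\sigma=\sigma_0$ on the exterior, $\varphi$ is conformal there, so the external map of $g$ is conformally, hence real-analytically, conjugate to $h$, i.e. the external class of $g$ is $[h]$. In the disconnected case I would run the same argument through the annular surfaces $T,T'$ and invoke Lemma \ref{lem} to identify the external class.

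The main obstacle is the control of the dilatation near the parabolic fixed point, where $h$ and $h_f$ fail to be expanding, so that the ``each orbit crosses the fundamental annulus once'' estimate degenerates as orbits accumulate on the parabolic point. This is exactly where Proposition \ref{arcprop}$(2)$ is decisive: expressed in repelling Fatou coordinates the conjugacy along the dividing arcs is quasisymmetric, and the interpolation can be arranged so that no extra dilatation accumulates along orbits tending to the parabolic point, keeping $\sigma$ bounded. A secondary point to verify is that $G$, and hence $g$, genuinely satisfies Definition \ref{definitionparlikemap}: that $\gamma_g$ remains a forward-invariant dividing arc, that $g:\Delta'\rightarrow\Delta$ stays an isomorphism, and that at least one attracting petal survives outside $K_g$; all of these are inherited from $f$, since $G=f$ near the parabolic point and $\varphi$ is a homeomorphism preserving this local picture.
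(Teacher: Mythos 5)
Your proposal follows, in essence, the paper's own route: cut along the dividing arcs, keep $f$ on the $\Omega'$-side, install the dynamics of $h$ on the other side, use Proposition \ref{arcprop}(2) to obtain a quasisymmetric conjugacy along the arcs, interpolate quasiconformally on a fundamental region abutting the parabolic point, build a bounded invariant almost complex structure by a one-crossing argument, integrate it with the Measurable Riemann Mapping Theorem, and read off the hybrid equivalence from $\sigma=\sigma_0$ on $K_f$ and the external class from conformality on the exterior (spread by the dynamics, resp.\ Lemma \ref{lem}). The one structural difference is that the paper never leaves $U$: it transplants $h$ into the $\Delta$-part of $U$ by a quasiconformal map $\Phi_{\Delta}:\Delta\rightarrow\Delta_B$ and builds a second interpolation $\Psi_Q$ on the quadrilateral $Q_f$, both obtained from piecewise $C^1$ quasicircle boundaries with non-zero angles (positive angle at $z_0$ because $\gamma_+$ and $\gamma_-$ are separated by an attracting petal) together with Douady--Earle extension of quasisymmetric boundary data; you instead glue $f$ to $h$'s own exterior collar, which additionally obliges you to realize the glued abstract surface as a plane domain before invoking the Measurable Riemann Mapping Theorem. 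Your phrase ``the interpolation can be arranged'' is precisely the content of the paper's two Claims, and the quasicircle-plus-Douady--Earle mechanism is what makes it true; note also that the one-crossing dilatation bound does not ``degenerate'' near the parabolic point --- the only issue is that the interpolating map be quasiconformal up to the pinch point, which is what the quasisymmetry of Proposition \ref{arcprop}(2) and the non-zero corner angles deliver.

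There is, however, one assertion in your verification that is wrong as stated and touches the heart of the theorem: you claim that the attracting petal outside $K_g$ and the isomorphism $g:\Delta'\rightarrow\Delta$ are ``inherited from $f$, since $G=f$ near the parabolic point.'' In the correct surgery $G$ coincides with $f$ only on $\Omega'\cup\gamma_f$, that is, on one side of the dividing arc; everything on the $\Delta'$-side --- in particular the exterior attracting petal(s) and the isomorphism $\Delta'\rightarrow\Delta$ --- must be imported from $h$ (in the paper, $\widetilde f=\Phi_{\Delta}^{-1}\circ h\circ\Phi_{\Delta}$ on $\widetilde{\Delta'}$). This is not a pedantic point: discarding $f$'s exterior parabolic dynamics and installing $h$'s is exactly what changes the external class. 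If $G$ really equalled $f$ on a full two-sided neighborhood of $z_0$, then points in $f$'s exterior attracting petals would never cross your gluing annulus, the exterior germ of $g$ at its parabolic fixed point would be $f$'s germ rather than $h$'s, and the external class of $g$ could not be $[h]$ for an arbitrary prescribed $h$. The verification should therefore read: forward invariance of $\gamma_g$ and the dynamics on $\Omega'_g$ are inherited from $f$, while the petal in $\Delta'_g$ and the isomorphism $\Delta'_g\rightarrow\Delta_g$ are inherited from $h$ through the transplanting map; with that correction your argument matches the paper's proof.
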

Throughout this proof we assume, in order to simplify the notation, $U$ and $U'$ with $C^1$ boundaries 
(if $U$ and $U'$ do not have $C^1$ boundaries we consider a parabolic-like restriction of ($f, U', U,
\gamma_f$) with $C^1$ boundaries).

Let $h: \S^1 \rightarrow \S^1$ be a parabolic external map of degree $d>1$, $z_*$ be its parabolic fixed point 
and $h:W' \longrightarrow W$ be a degree $d$ covering extension.
Define $B= W \cup \D$ and $B' = W' \cup \D$. 
We are going to construct now a dividing arc $\widetilde \gamma:[-1,1]\rightarrow \overline B
\setminus \D$ for $h$, such that on $\widetilde \gamma$ the dynamics of $h$ is conjugate to the dynamics of
$f$.

Let $h_f$ be an external map of $f$, $z_1$ its parabolic fixed point, $h_f:W_f' \rightarrow W_f$ a degree $d$ covering extension
and $\alpha$ an external equivalence between $f$
and $h_f$. The dividing
arcs
$\gamma_{h_f \pm}$ are tangent to $\S^1$ at the parabolic fixed point $z_1$, and they
divide $W_f$ and $W_f'$ in $\Delta_W, \Omega_W$ and $\Delta'_W, \Omega'_W$ respectively (see Section \ref{propex}).

Let $\Xi_{h_f \pm}$ be repelling petals for the parabolic fixed point $z_1$ which intersect the unit circle and
$\phi_{\pm}: \Xi_{h_f \pm} \rightarrow \H_l  $ be Fatou coordinates with axis tangent to the unit
circle at the parabolic fixed point $z_1$. 
On the other hand, let $\Xi_{h \pm}$ be repelling petals for the parabolic fixed point $z_*$ of $h$ which intersect the
unit circle and
$\widetilde \phi_{\pm}: \Xi_{h \pm} \rightarrow \H_l  $ be Fatou coordinates with axis tangent to the unit
circle at the parabolic fixed point $z_*$.
Define  $$\widetilde{\gamma}_{+}= \widetilde{\phi}_{+}^{-1}(\phi_{h_f +}(\gamma_{h_f +}))$$ and 
$$\widetilde{\gamma}_{-}= \widetilde{\phi}_{-}^{-1}(\phi_{h_f -}(\gamma_{h_f -})).$$ 
The arc $\widetilde{\gamma}= \widetilde{\gamma}_+ \cup
\widetilde{\gamma}_-$ is (possibly after rescaling) a dividing arc for $h$. It
divides the set $B$ into $\Omega_B$ and $\Delta_B$ (with $\D \in \Omega_B$) and the set $B'$ into
$\Omega'_B$ and $\Delta'_B$ (with $\D \in \Omega'_B$).
Define the map $\widetilde{\phi}^{-1}\circ \phi_{h_f}: \gamma_{h_f} \rightarrow \widetilde{\gamma}$ as follows:
$$ \widetilde{\phi}^{-1}\circ \phi_{h_f}(z)=\left\{
\begin{array}{cl}
\widetilde{\phi}_{+}^{-1}\circ \phi_{h_f +} &\mbox{on  } \gamma_{h_f +}\\
\widetilde{\phi}_{-}^{-1}\circ \phi_{h_f -} &\mbox{on  } \gamma_{h_f -}\\
\end{array}\right.
$$  
By Proposition \ref{arcprop}(2) the map $\widetilde{\phi}^{-1}\circ \phi_{h_f}$ is a quasisymmetric conjugacy between $h_{f|\gamma_{h_f}}$
and $h_{|\widetilde{\gamma}}$.
Let $z_0$ be the parabolic fixed point of $f$, and define the map $\psi: \gamma_f \rightarrow \widetilde{\gamma}$ as
follows:
$$ \psi(z)=\left\{
\begin{array}{cl}
\widetilde{\phi}_{+}^{-1}\circ \phi_{h_f +} \circ \alpha &\mbox{on  } \gamma_{f +} \setminus \{z_0\}\\
\widetilde{\phi}_{-}^{-1}\circ \phi_{h_f -} \circ \alpha &\mbox{on  } \gamma_{f -} \setminus \{z_0\}\\
z_* &\mbox{on  } z_0\\
\end{array}\right.
$$  
The map $\psi: \gamma_f \rightarrow \widetilde{\gamma}$ is an orientation preserving homeomorphism, real-analytic on
$\gamma_f \setminus \{z_0\}$, which conjugates the dynamics of $f$ and $h$.
Let $\psi_0:\partial U \rightarrow \partial B$ be an 
orientation preserving $C^1$-diffeomorphism coinciding with $\psi$ on $\gamma_f \cap \partial U$ (it exists because
both $U$ and $B$ have smooth boundaries).
\begin{claim}
 There exists a quasiconformal map $\Phi_{\Delta}:\Delta \rightarrow  \Delta_B$ which extends to 
$\psi$ on $\gamma_{f}$, and to $\psi_0$ on $\partial U \cap \partial \Delta$.
\end{claim}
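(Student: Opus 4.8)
The claim asks only for a quasiconformal map with the prescribed boundary values, so the content is the extension of the boundary homeomorphism $\Psi:=\psi\cup\psi_0:\partial\Delta\to\partial\Delta_B$ to the interiors, with no dynamical constraint inside $\Delta$. Both $\Delta$ and $\Delta_B$ are Jordan domains whose boundaries are piecewise real-analytic, with ordinary corners at $\gamma(\pm1),\widetilde\gamma(\pm1)$ and a \emph{parabolic} corner at $z_0$, $z_*$; $\Psi$ is a $C^1$-diffeomorphism between the corresponding arcs away from $z_0$ and a conjugacy of parabolic germs on $\gamma_{f\pm}$. I would build $\Phi_\Delta$ in two pieces, a neighborhood of the parabolic corner and its complement, and glue them by a bounded quasiconformal interpolation.

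\textbf{Near the parabolic corner.} Let $\phi$ be an incoming Fatou coordinate for an attracting petal of $f$ contained in $\Delta$, and $\widetilde\phi$ the incoming Fatou coordinate for the corresponding attracting petal of $h$ in $\Delta_B$, each conjugating its map to the unit translation $T(w)=w+1$. Using $\phi,\widetilde\phi$ as charts, a small neighborhood $N\subset\Delta$ of $z_0$ and its counterpart $N_B\subset\Delta_B$ become model regions in the $w$-plane, and the prescribed boundary map, read in these charts, \emph{commutes with $T$}: from $\phi\circ f=\phi+1$ and $\widetilde\phi\circ h=\widetilde\phi+1$ one gets $(\widetilde\phi\circ\psi\circ\phi^{-1})\circ T=T\circ(\widetilde\phi\circ\psi\circ\phi^{-1})$ using $\psi\circ f=h\circ\psi$ on $\gamma_f$. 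Moreover it is quasisymmetric, by the parabolic-germ computation of Proposition \ref{arcprop}(2). A translation-commuting quasisymmetric boundary map admits a $T$-equivariant quasiconformal extension to the model region; pulling this back through $\phi,\widetilde\phi$ yields a quasiconformal $\Phi_N:N\to N_B$ realizing $\psi$ on $\gamma_f\cap\partial N$, and the $T$-equivariance forces its dilatation to remain bounded all the way up to the cusp $z_0$.

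\textbf{Away from the corner and gluing.} On $\Delta\setminus N$ the boundary data is a $C^1$-diffeomorphism between real-analytic arcs meeting in corners of positive interior angle, so $\Delta\setminus N$ and $\Delta_B\setminus N_B$ are quasidiscs; uniformizing them to $\D$ transports $\Psi$ to a quasisymmetric circle homeomorphism, which extends quasiconformally by the Beurling--Ahlfors construction. Finally I would glue $\Phi_N$ to this extension across the common crosscut $\partial N\setminus\gamma_f$ by a standard quasiconformal interpolation on a collar (a partition-of-unity argument keeping the Beltrami coefficients uniformly bounded away from $1$), obtaining a single quasiconformal $\Phi_\Delta:\Delta\to\Delta_B$ with $\Phi_\Delta|_{\gamma_f}=\psi$ and $\Phi_\Delta|_{\partial U\cap\partial\Delta}=\psi_0$. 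If convenient, one may instead transport the glued almost complex structure by the Measurable Riemann Mapping Theorem.

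\textbf{Main obstacle.} The whole difficulty sits at the parabolic corner $z_0\mapsto z_*$, where $\Psi$ degenerates from a diffeomorphism into a conjugacy of parabolic germs and the domain $\Delta$ may carry a genuine cusp (for $f$ of odd parabolic multiplicity), so that a naive extension has dilatation blowing up as $z\to z_0$. The point is to use Fatou coordinates to linearize the dynamics there and the quasisymmetry of Proposition \ref{arcprop}(2) to guarantee a \emph{translation-equivariant}, hence uniformly bounded-dilatation, extension up to the cusp; matching this with the routine smooth extension on the overlap without spoiling the global bound is the remaining delicate point.
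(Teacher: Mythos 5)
Your overall strategy (split $\Delta$ into a cusp neighborhood $N$ and its complement, extend separately, glue on a collar) is genuinely different from the paper's, and you correctly locate the difficulty at the parabolic corner; but as written the construction at that corner has two real gaps. First, you read the boundary map in an \emph{incoming} Fatou coordinate $\phi$ "for an attracting petal of $f$ contained in $\Delta$". The arcs $\gamma_{f\pm}$ are not in the attracting basin: by definition they are forward invariant with $f(\gamma(t))=\gamma(dt)$, so points on them escape from $z_0$, and they reside in \emph{repelling} petals. Hence an incoming Fatou coordinate is simply not defined on $\gamma_f$, and the map $\widetilde\phi\circ\psi\circ\phi^{-1}$ you propose to extend does not exist in that chart; you would have to use repelling (outgoing) Fatou coordinates, as Proposition \ref{arcprop}(2) and the definition of $\psi$ do. Second, and more seriously, even with repelling coordinates there is in general no single Fatou chart in which $N$ becomes one "model region": $\gamma_{f+}$ and $\gamma_{f-}$ may lie in different repelling petals (the multiplicity of $z_0$ can exceed one, e.g. the fat rabbit example, where it is $3$), with attracting petals of $z_0$ lying inside $\Delta$ between them; even for multiplicity one, the two arcs sit on opposite sides of the repelling direction and the attracting petal fills the middle of $N$. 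Your translation-equivariance argument controls the dilatation of an extension from \emph{one} periodic curve into the adjacent strip of its own chart; it says nothing about the interpolation between the two strip-extensions across the intermediate petals. That interpolation takes place in a region which pinches at $z_0$ (the arcs may meet tangentially there, so the interior angle of $\Delta$ is $0$ or $2\pi$, while the angle of $\Delta_B$ at $z_*$ is positive), and a partition-of-unity interpolation on a pinching region does not keep the Beltrami coefficient bounded away from $1$. This unresolved step is the entire content of the claim, not a "remaining delicate point".

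The idea you are missing is the one the paper uses to dissolve the cusp in a single stroke: transport the problem through the external equivalence $\alpha$. Since $\alpha$ is conformal on $\Delta$, it suffices to build $\Phi_{\Delta_W}:\Delta_W\rightarrow\Delta_B$ with the transported boundary values ($\widetilde{\phi}^{-1}\circ\phi_{h_f}$ on $\gamma_{h_f}$, $\psi_0\circ\alpha^{-1}$ on $\alpha(\partial U\cap\partial\Delta)$) and set $\Phi_\Delta=\Phi_{\Delta_W}\circ\alpha$. In the external plane the arcs $\gamma_{h_f\pm}$ meet at a \emph{positive} angle at $z_1$ (they are separated by an attracting petal of $z_1$; in fact the angle is $\pi$, see \cite{LL}), so $\partial\Delta_W$ and $\partial\Delta_B$ are piecewise $C^1$ quasicircles. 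Then one global extension suffices: the Riemann maps of $\Delta_W$ and $\Delta_B$ have quasisymmetric boundary extensions, the induced circle homeomorphism is quasisymmetric (Proposition \ref{arcprop}(2) for the arc conjugacy, $C^1$-smoothness of $\psi_0$ elsewhere), and the Douady--Earle extension gives $\Phi_{\Delta_W}$. No decomposition, no gluing, and the cusp never appears. To complete your local approach you would essentially have to reprove, inside a multi-chart Fatou/\'Ecalle model of the cusp, exactly the angle-opening fact that the conformal map $\alpha$ provides for free.
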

\begin{proof}
It is sufficient to construct a quasiconformal map $\Phi_{\Delta_W}:\Delta_W \rightarrow  \Delta_B$ which extends to
$\widetilde{\phi}^{-1}\circ \phi_{h_f}$ on $\gamma_{h_f}$ and to $\psi_0\circ \alpha^{-1}$ on
$\alpha(\partial U \cap \partial \Delta)$. Then we will set 
$\Phi_{\Delta}= \Phi_{\Delta_W}\circ \alpha$.

The set $\partial \Delta_W$ is a quasicircle, since it is a piecewise $C^1$ closed
curve with non-zero interior angles. Indeed, $\gamma_{h_f +}$ and $\gamma_{h_f -}$ form a positive angle since they
are separated by at least one attracting petal, and we can assume the angles between $\gamma_{h_f}$
and $\partial W_f$ to be positive (we may take parabolic-like restrictions).
The same argument shows that  $\partial \Delta_B$ is a quasicircle.
Let $\Phi_f:\Delta_W \rightarrow \D $ and $\Phi_h:\Delta_B \rightarrow \D$ be Riemann maps, and let $\Psi_f:\D
\rightarrow \Delta_W$ and $\Psi_h:\D \rightarrow \Delta_B$ be their inverse maps. By the Carathéodory Theorem the maps
$\Psi_f$ and $\Psi_h$ extend continuously to the boundaries, and since $\partial \Delta_W$ and $\partial \Delta_B$ are
quasicircles, the restrictions 
$\Psi_f: \S^1\rightarrow \partial \Delta_W$ and $\Psi_h: \S^1\rightarrow \partial \Delta_B$ are quasisymmetric.
Define the map $\widetilde \Phi_0: \S^1 \rightarrow \S^1$ as follows:
$$ \widetilde{\Phi_0}(z)=\left\{
\begin{array}{cl}
\Psi_h^{-1} \circ  \widetilde{\phi}^{-1}\circ \phi_{h_f} \circ \Psi_f &\mbox{on  } \Psi_f^{-1}(\gamma_{h_f})\\
\Psi_h^{-1} \circ  \psi_0 \circ \alpha^{-1}\circ \Psi_f &\mbox{on  } \Psi_f^{-1} (\partial \Delta_W \cup \partial W_f)\\
\end{array}\right.
$$  
The map $\widetilde \Phi_0: \S^1 \rightarrow \S^1$ is quasisymmetric, because
the extensions of $\Psi_{h}$ and $\Psi_{f}$ to the unit circle are quasisymmetric,
$\alpha$ is conformal, the map $\psi_0$ is a $C^1$-diffeomorphism and by Proposition \ref{arcprop}(2)
the map
$\widetilde{\phi}^{-1}\circ \phi_{h_f}:\gamma_{h_f} \rightarrow \widetilde\gamma$ is
quasisymmetric. Hence it
extends by the Douady-Earle extension (see \cite{DE}) to a quasiconformal map
$\widetilde \phi_0:\overline \D \rightarrow \overline \D$ which is a real-analytic diffeomorphism
on $\D$. Thus $\Phi_{\Delta_W}:= \Psi_h \circ \widetilde \phi_0 \circ \Phi_f$ is a
quasiconformal map between $\overline{\Delta_W}$ and $\overline{\Delta_B}$, which is a real-analytic diffeomorphism on
$\Delta_W$, and which coincides with $ \widetilde{\phi}^{-1}\circ\phi_{h_f}$ on
$\gamma_{h_f}$ and with $\psi_0\circ \alpha^{-1}$ on
$\alpha(\partial U \cap \partial \Delta)$.
\end{proof}
Let us define $\widetilde\Delta_B= h(\Delta_B \cap \Delta'_B),\,\,\, \widetilde B = \Omega_B \cup \widetilde \gamma
\cup \widetilde\Delta_B,\,\,\widetilde B' = h^{-1}(\widetilde B),\,\,\,\widetilde \Omega_B'=
\Omega_B' \cap \widetilde B',\,\,\,\widetilde \Delta_B'=\Delta_B' \cap \widetilde B'$.
On the other hand define $\widetilde{\Delta}= \Phi_{\Delta}^{-1}(\widetilde \Delta_B)$, $\widetilde{\Delta'}=
\Phi_{\Delta}^{-1}(\widetilde \Delta_B')$, $\widetilde{U}= (\Omega  \cup \gamma_f \cup \widetilde{\Delta}) \subset U$.
Consider
$$ \widetilde{f}(z)=\left\{
\begin{array}{cl}
\Phi_{\Delta}^{-1} \circ h \circ \Phi_{\Delta} &\mbox{on  } \widetilde{\Delta'} \\
f  &\mbox{on  }  \Omega'\cup \gamma_f\\
\end{array}\right.
$$
Define $\widetilde{U'}=\widetilde{f}^{-1}(\widetilde{U})$, and $\widetilde{\Omega'}=\widetilde{U'} \cap \Omega'$.
The map $ \widetilde{f}: \widetilde{U'}\rightarrow \widetilde{U} $ is a degree $d$ proper and quasiregular map which
coincides with $f$ on $(\widetilde{\Omega'}\cup \gamma_f) \subset (\Omega'\cup \gamma_f)$.
Define $\widehat {U'}=f^{-1}(\widetilde{U})$, $\widehat {\Delta'}= \Delta' \cap \widehat {U'}$ and $\widehat {\Omega'}=
\Omega' \cap \widehat {U'}$. Then $(f, \widehat {U'}, \widetilde {U}, \gamma_f)$ is a parabolic-like restriction of $(f,
U', U, \gamma_f)$, and $\widehat {\Omega'}= \widetilde{\Omega'}$.
Set $Q_f=\Omega \setminus \overline{\widetilde{\Omega'}}$, and $Q_h= \Omega_B \setminus \overline{\widetilde \Omega'}_B$. Let
$\overline{\psi}_0: \partial \widetilde U \rightarrow \partial \widetilde B$ be an 
orientation preserving $C^1$-diffeomorphism coinciding with $\psi_0$ on $\partial \Omega$, and let $\psi_1:\partial
\widetilde{U'} \rightarrow \partial\widetilde B'$ be
a lift of $\overline \psi_0 \circ \widetilde{f}$ to $h$.
\begin{claim}
 There exists a homeomorphism $\widetilde{\psi}:\overline U \setminus \widetilde\Omega \rightarrow
\overline B \setminus \widetilde\Omega_B$ quasiconformal on $\overline U \setminus (\widetilde\Omega \cup \{z_0\})$
such that the almost complex structure $\sigma$ defined as:
$$ \sigma(z)=\left\{
\begin{array}{cl}
\sigma_0 &\mbox{on  } K_f\\
\sigma_1 = \widetilde{\psi}^*(\sigma_0)  &\mbox{on  } \overline U \setminus \overline{\widetilde\Omega}\\
(\widetilde{f}^n)^*\sigma_1 &\mbox{on  } \widetilde{f}^{-n}(\overline{\widetilde U} \setminus \overline{\widetilde\Omega})\\
\end{array}\right.
$$
is bounded and $\widetilde{f}$-invariant.
\end{claim}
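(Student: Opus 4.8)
The plan is to build $\widetilde\psi$ by interpolating separately on the two dynamically distinct pieces of $\overline U\setminus\widetilde\Omega$, namely the exterior petal region $\widetilde\Delta$ and the fundamental quadrilateral $Q_f=\Omega\setminus\overline{\widetilde\Omega'}$, and then to spread the standard structure inward along the orbits of $\widetilde f$. First I would set $\widetilde\psi=\Phi_\Delta$ on $\widetilde\Delta$, using the map produced by the previous Claim; this already realizes the prescribed boundary values $\psi$ on $\gamma_f$ and $\psi_0$ on $\partial U\cap\partial\Delta$. On $Q_f$ I would use that both $Q_f$ and $Q_h=\Omega_B\setminus\overline{\widetilde\Omega'}_B$ are topological quadrilaterals (for $Q_h$ this is exactly the covering-extension hypothesis on $h$, and $Q_f$ inherits the same structure through $\alpha$), and extend to a quasiconformal homeomorphism $Q_f\to Q_h$ the boundary correspondence given by $\psi_0$ on $\partial U\cap\partial\Omega$, by $\psi$ on the part of $\gamma_f$ bounding $\Omega$, and by $\psi_1$ on $\partial\widetilde{U'}\cap\partial\Omega$. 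These three pieces of boundary data agree at the corners and are quasisymmetric on each side ($\psi_0,\overline\psi_0$ are $C^1$-diffeomorphisms, $\psi_1$ is their lift by $h$, and $\psi$ is quasisymmetric by Proposition \ref{arcprop}(2)), so a standard quadrilateral extension applies. Gluing the two pieces along $\gamma_f$ yields a homeomorphism $\widetilde\psi$ which is quasiconformal everywhere except possibly at the parabolic corner $z_0$: there $\psi$ conjugates parabolic germs of a priori different multiplicities, so quasiconformality can fail, but $\{z_0\}$ is a single point and hence $\widetilde\psi$ is quasiconformal on $\overline U\setminus(\widetilde\Omega\cup\{z_0\})$.

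Next I would check that $\sigma$ is well defined and $\widetilde f$-invariant. The sets $K_f$, the fundamental region $\overline U\setminus\overline{\widetilde\Omega}$, and its iterated preimages $\widetilde f^{-n}(\overline{\widetilde U}\setminus\overline{\widetilde\Omega})$ exhaust $\widetilde{U'}$, and off a set of measure zero they are disjoint, so the three clauses define $\sigma$ almost everywhere. The one place where consistency must be verified is the petal region: on $\widetilde\Delta$ we have $\sigma_1=\widetilde\psi^*\sigma_0=\Phi_\Delta^*\sigma_0$, while $\widetilde f=\Phi_\Delta^{-1}\circ h\circ\Phi_\Delta$ there, so
\[
\widetilde f^{\,*}\sigma_1=\Phi_\Delta^*\,h^*\,\sigma_0=\Phi_\Delta^*\sigma_0=\sigma_1,
\]
because $h$ is holomorphic and preserves $\sigma_0$. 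Thus on the exterior petals the structure is already $\widetilde f$-invariant and all the clauses agree, while on $\Omega'\cup\gamma_f$ the map $\widetilde f=f$ is holomorphic and $K_f$-preserving; invariance $\widetilde f^{\,*}\sigma=\sigma$ then holds a.e. by construction.

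The hard part will be the uniform boundedness of $\sigma$, since orbits may spend arbitrarily many iterates in the petal before reaching the fundamental region, and a naive estimate would let the dilatation grow with $n$. The point I would emphasize is that no such growth occurs: the only non-holomorphic part of $\widetilde f$ is on $\widetilde\Delta'$, and there the invariant structure is exactly $\Phi_\Delta^*\sigma_0$, whose dilatation is the fixed constant $K(\Phi_\Delta)$ and is reproduced, not amplified, by each pullback (by the computation above). Everywhere else $\widetilde f=f$ is holomorphic and preserves dilatation, and each escaping orbit meets the fundamental quadrilateral $\overline{\widetilde U}\setminus\overline{\widetilde\Omega}$, where $\sigma_1=\widetilde\psi^*\sigma_0$ has dilatation at most $K(\widetilde\psi)$. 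Consequently the dilatation at any point is bounded by $\max\{K(\Phi_\Delta),K(\widetilde\psi)\}$, independently of the number of iterates, so $\sigma$ is bounded and the Claim follows. I would expect the delicate bookkeeping to be the verification that the petal structure $\Phi_\Delta^*\sigma_0$ matches up across $\gamma_f$ with the inward-spread structure on $\Omega$, and that the single bad point $z_0$ is genuinely negligible for the Beltrami coefficient.
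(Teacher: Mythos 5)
Your proposal is correct and takes essentially the same route as the paper: glue $\Phi_{\Delta}$ on the petal side to a quasiconformal interpolation on the quadrilateral $Q_f$ realizing the boundary data $\psi$, $\psi_0$, $\psi_1$ (the paper implements your ``standard quadrilateral extension'' concretely, via Riemann maps of the quasicircle-bounded quadrilaterals $Q_f$, $Q_h$ and the Douady--Earle extension of the resulting quasisymmetric circle map), and then pull back by the dynamics. Your explicit checks of $\widetilde{f}$-invariance on the petal and of uniform boundedness of the dilatation (the petal structure is reproduced, not amplified, under pullback, and $\widetilde{f}=f$ is holomorphic elsewhere) spell out precisely what the paper asserts without proof.
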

\begin{proof}
 Let us start by constructing a quasiconformal map $\Psi_Q$ between $\overline{Q_f}$ and 
$\overline{Q_h}$ which agrees with $\psi$ on $\gamma_f$, with $\psi_0$ on $\partial U$ and with $\psi_1$ on
$\partial \widetilde{U'}$. 
The sets $\partial Q_f$ and $\partial Q_h$ are quasicircles, since they are piecewise $C^1$ closed
curves with non-zero interior angles. Let $\varphi_f: Q_f \rightarrow \D$ and $\varphi_h: Q_h \rightarrow \D$
be Riemann maps, and let $\psi_f:\D
\rightarrow Q_f$ and $\psi_h:\D \rightarrow Q_h$ be their inverses. By the Carathéodory Theorem
$\psi_f$ and $\psi_h$ extend continuously to the boundaries, and since 
$\partial Q_f$ and $\partial Q_h$ are quasicircles, $\psi_{f_{|\S^1}}$ and $\psi_{h_{|\S^1}}$ are quasisymmetric.
Hence the map $\widehat \Phi_0: \S^1 \rightarrow \S^1$ defined as:
$$ \widehat{\Phi_0}(z)=\left\{
\begin{array}{cl}
\psi_h^{-1} \circ  \psi_+ \circ \psi_f &\mbox{on  } \psi_f^{-1}(\gamma_{f +})\\
\psi_h^{-1} \circ  \psi_0 \circ \psi_f &\mbox{on  } \psi_f^{-1}(\overline{Q_f} \cap \partial U)\\
\psi_h^{-1} \circ  \psi_- \circ \psi_f &\mbox{on  } \psi_f^{-1}(\gamma_{f -})\\
\psi_h^{-1} \circ  \psi_1 \circ \psi_f &\mbox{on  } \psi_f^{-1}(\overline{Q_f} \cap \partial \widetilde U')\\
\end{array}\right.
$$  
is quasisymmetric, and it
extends (see \cite{DE}) to a quasiconformal map
$\widehat \varphi_0:\overline \D \rightarrow \overline \D$ which is a real-analytic diffeomorphism
on $\D$. Finally, the map $\Psi_{Q}:= :\psi_h\circ \widehat{\varphi_0} \circ \psi_f^{-1}:\overline{Q_f} \rightarrow \overline{Q_h}$ is a 
quasiconformal map which coincides with  $\psi$ on $\gamma_f$, with $\psi_0$ on $\partial U$ and with $\psi_1$ on
$\partial \widetilde{U'}$. Moreover the map $\Psi_{Q}$ is a real-analytic diffeomorphism on
$Q_f$.

Define the homeomorphism $\widetilde{\psi}:\overline U \setminus \widetilde\Omega \rightarrow
\overline B \setminus \widetilde\Omega_B$ quasiconformal on $\overline U \setminus (\widetilde\Omega \cup \{z_0\})$
as follows:
$$ \widetilde{\psi}(z)=\left\{
\begin{array}{cl}
\psi  &\mbox{on  } \gamma_f\\ 
\psi_0 &\mbox{on  } \partial U \\
\psi_1 &\mbox{on  } \partial \widetilde \Omega' \cap \partial \widetilde U'\\
\Psi_Q &\mbox{on  } Q_f \\
\Phi_{\Delta} &\mbox{on  } \Delta\\
\end{array}\right.
$$ 
Therefore the almost complex structure
$$ \sigma(z)=\left\{
\begin{array}{cl}
\sigma_0 &\mbox{on  } K_f\\
\sigma_1 = \widetilde{\psi}^*(\sigma_0)  &\mbox{on  } \overline U\setminus\overline {\widetilde \Omega'}\\
(\widetilde{f}^n)^*\sigma_1 &\mbox{on  } \widetilde{f}^{-n}(\overline{\widetilde U} \setminus \overline{\widetilde\Omega})\\
\end{array}\right.
$$
is bounded and $\widetilde{f}$-invariant. 
\end{proof}
By the Measurable Mapping Theorem,
there exists a quasiconformal map $\varphi:U \rightarrow  \C$ such that $\varphi^* \sigma_0=\sigma$.
Let $$g:=\varphi \circ \widetilde{f} \circ \varphi^{-1}\,: \varphi(\widetilde{U'}) \rightarrow
\varphi(\widetilde{U}).$$
Let us call $V'=\varphi(\widetilde{U'})$, $V=\varphi(\widetilde{U})$,
$\gamma_{g+}=\varphi(\gamma_{f +})$ and
$\gamma_{g-}=\varphi(\gamma_{f -})$. Then ($g, V', V, \gamma_g$) is a
parabolic-like map hybrid equivalent to $f$. Indeed, since $\widetilde{f}_{|\widetilde{\Omega'}\cup \gamma_f}=f$,
$\widehat \Omega= \widetilde \Omega$ and $(f, \widehat {U'}, \widetilde {U}, \gamma_f)$ is a parabolic-like restriction of $(f, U', U, \gamma_f)$,
the map $\varphi$ is a quasiconformal conjugacy between $f$ and $g$, and
$\varphi^* \sigma_0=\sigma_0$ on
$K_f$ by construction.

If $K_f$ is connected, define the quasiconformal map $\widehat{\psi}:U \setminus K_f \rightarrow
B \setminus \overline{\D}$ as follows:
$$ \widehat{\psi}(z)=\left\{
\begin{array}{cl}
\widetilde{\psi} &\mbox{on  } U\setminus(\widetilde{\Omega'} \cup \{z_0\})\\
h^{-n} \circ \widetilde{\psi} \circ \widetilde{f}^{n} &\mbox{on  }  \widetilde{f}^{-n}(\widetilde{U}\setminus \widetilde \Omega')\\
\end{array}\right.
$$
Then the quasiconformal map
$\overline{\psi}=\widehat{\psi} \circ \varphi^{-1}:(V \cup
V')\setminus K_g \rightarrow B \setminus \overline{\D}$ is an external equivalence between $g$ and
$h$, since by construction on $V'\setminus K_g$ $\overline{\psi} \circ g = h \circ
\overline{\psi}$, and $\overline{\psi}$ is holomorphic
(indeed $(\widehat{\psi} \circ \varphi^{-1})^*\sigma_0=\sigma_0$).

If $K_f$ is not connected, let $V_f\approx \D$ be a full relatively compact connected subset of
$\widetilde{U}$, containing $\overline {\widetilde{\Omega'}}$, the critical values of $\widetilde{f}$ and such that
($f, f^{-1}(V_f), V_f,\gamma_f$) is a parabolic-like restriction of
$(f,\,\widehat{U'},\,\widetilde{U},\,\gamma_f)$.
Call $L=\widetilde{f}^{-1}(\overline V_f)\cap \overline {\widetilde{\Omega'}}$.
Define the map $\widehat{\psi}: U\setminus L \rightarrow B \setminus \overline{\D}$
as follows:
$$ \widehat{\psi}(z)=\left\{
\begin{array}{cl}
\widetilde{\psi} &\mbox{on  } U\setminus(\widetilde\Omega' \cup \{z_0\})\\
h^{-1} \circ \widetilde{\psi} \circ \widetilde{f} &\mbox{on  }  \widetilde{f}^{-1}(\widetilde U\setminus\widetilde \Omega')\setminus L\\
\end{array}\right.
$$
Let $V_g\approx \D$ be a full relatively
compact connected subset of $V$ containing
$\overline \Omega_g'$, the critical values of $g$ and such that $(g,g^{-1}(V_g),V_g,\gamma_g)$ is a parabolic-like
restriction of $(g,\,V,\,V',\,\gamma_g)$. Call $M=g^{-1}(\overline V_g)\cap \overline \Omega_g'$.
Then the map 
$\overline{\psi}=\widehat{\psi} \circ \varphi^{-1}:(V \cup V')\setminus M \rightarrow B \setminus \overline{\D}$ is an
external equivalence between $g$ and $h$ (cfr. Lemma \ref{lem}).
\subsection{Unicity}

\begin{prop}\label{hol}
 Let $f: U'\rightarrow U$ and $g :V' \longrightarrow V$ be two parabolic-like mappings of degree $d$ with connected
Julia sets. If 
they are hybrid and externally equivalent, then they are holomorphically equivalent.
\end{prop}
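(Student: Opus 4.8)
The plan is to fuse the two given conjugacies into a single conformal one. Since $f$ and $g$ are hybrid equivalent, I fix a quasiconformal homeomorphism $\varphi$ conjugating $f$ to $g$ (on suitable parabolic-like restrictions), with $\overline\partial\varphi=0$ a.e. on $K_f$ and $\varphi(\gamma_{\pm f})=\gamma_{\pm g}$. Since $f$ and $g$ are also externally equivalent and $K_f,K_g$ are connected, the characterization of external equivalence in the connected case supplies a \emph{biholomorphic} map $\psi:(A\cup A')\setminus K_f\rightarrow (B\cup B')\setminus K_g$ with $\psi\circ f=g\circ\psi$ and $\psi(\gamma_{\pm f})=\gamma_{\pm g}$. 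Both $\varphi$ and $\psi$ conjugate $f$ to $g$ off the filled Julia set, so $\theta:=\psi^{-1}\circ\varphi$ is a quasiconformal self-map of the exterior $(A\cup A')\setminus K_f$ that commutes with $f$ and preserves $\gamma_{\pm f}$.

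The decisive step is to show that $\theta$ is conformal. Granting this, $\varphi=\psi\circ\theta$ is conformal on the exterior, and being conformal a.e. on $K_f$ as well (the hybrid hypothesis), the map $\varphi$ satisfies $\overline\partial\varphi=0$ a.e. on all of $A\cup A'$; a quasiconformal map with vanishing Beltrami coefficient a.e. is holomorphic, so $\varphi$ is a biholomorphism. As $\varphi(\gamma_{\pm f})=\gamma_{\pm g}$, this $\varphi$ is exactly a holomorphic equivalence between $f$ and $g$, as required.

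To prove that $\theta$ is conformal I would proceed in two stages. First, because $\varphi$ is quasiconformal across $J_f$ and $\psi$ extends the dynamical conjugacy up to $K_g$, the map $\theta$ extends continuously to $J_f$ as a homeomorphism commuting with $f|_{J_f}$; since $f$ is expanding on its connected Julia set and $\theta$ is isotopic to the identity near $J_f$, expansivity pins this boundary homeomorphism down to the identity, $\theta|_{J_f}=\mathrm{id}$. Second, the Beltrami coefficient $\mu_\theta$ is $f$-invariant on the exterior, hence determined by its restriction to a fundamental quadrilateral of type $Q_f$ and spread by the holomorphic dynamics; pulling $\mu_\theta$ back along the inverse branches of $f$, its support accumulates on $J_f$, where $\theta$ is the identity, and the expansion of $f$ contracts the dilatation along the way. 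This forces $\mu_\theta=0$ a.e., i.e. $\theta$ is conformal.

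The hard part will be making this rigidity argument work near the parabolic fixed point $z_0$, where the external dynamics degenerates and $f$ is not uniformly expanding, so the ``expansion kills the dilatation'' estimate fails. There the plan is to pass to repelling Fatou coordinates, in which $f$ becomes the translation $T(z)=z+1$ on a half-plane-type region and the $f$-invariance of $\mu_\theta$ becomes $1$-periodicity; one then uses that $\theta$ fixes $z_0$, that the dilatation is globally bounded, and that the grand orbit of $z_0$ has zero area, to conclude $\mu_\theta=0$ near $z_0$ as well. Patching the expanding estimate away from $z_0$ together with this parabolic estimate yields $\mu_\theta\equiv 0$ on the whole exterior, which closes the argument.
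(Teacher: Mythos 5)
There is a genuine gap, and it is fatal to the strategy rather than to a detail: your ``decisive step'' --- that $\theta:=\psi^{-1}\circ\varphi$ is conformal --- is false in general, and so is the stronger conclusion you would draw from it, namely that the \emph{given} hybrid equivalence $\varphi$ is itself holomorphic. The proposition only asserts that \emph{some} holomorphic equivalence exists; the given $\varphi$ is almost never one. To see why $\theta$ cannot be forced to be conformal, look at the model external dynamics $h(z)=z^d$: the radial stretch $\theta_\epsilon(z)=z|z|^{\epsilon}$ is quasiconformal, commutes with $h$, and is the identity on $\S^1$, yet it is nowhere conformal outside the circle. The same phenomenon exists for parabolic external maps: take any non-conformal quasiconformal self-map of a fundamental quadrilateral (the region the paper calls $Q_h$) which is the identity on its boundary, and spread it by the dynamics; the result is a quasiconformal self-conjugacy of $h$ near $\S^1$, equal to the identity on $\S^1$, with non-zero invariant Beltrami coefficient. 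Transporting such a map through the external equivalence turns any given hybrid equivalence $\varphi$ into another hybrid equivalence, so no argument can pin $\theta$ down to be conformal. This also identifies the false mechanism in your stage two: pullback of a Beltrami form by a holomorphic map preserves its modulus pointwise, $|\mu_{f^*\sigma}|=|\mu_\sigma|\circ f$, so the expansion of $f$ does \emph{not} contract dilatation at all; $f$-invariant Beltrami coefficients of any norm $<1$ abound on the complement of $K_f$ (this is exactly how one deforms external structure in the first place), and being the identity on $J_f$ does not kill them. Invariant line fields can only be excluded on Julia sets, never on Fatou-type regions. (Your stage one is also shaky --- quasiconformal maps need not extend continuously to a fractal $J_f$, and $f$ is not expanding on $J_f$ because the parabolic point lies there --- but this is secondary.)

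The paper's proof circumvents precisely this obstruction by \emph{gluing} instead of composing: it defines $\Phi=\varphi$ on $K_f$ and $\Phi=\psi$ off $K_f$, and the whole work is to show that $\Phi$ is continuous across $J_f$. For that, $\varphi$ is transported to the external circle: with $\alpha$ the Riemann map of $\widehat\C\setminus K_f$ defining an external map $h$ of $f$, and $\beta=\alpha\circ\psi^{-1}$, the map $\chi=\beta\circ\varphi\circ\alpha^{-1}$ is a quasiconformal autoconjugacy of $h$ near $\S^1$; after Schwarz reflection its restriction to $\S^1$ is a quasisymmetric autoconjugacy of $h$, hence the identity, because the preimages of the parabolic fixed point are dense in $\S^1$. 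A quasiconformal map fixing $\S^1$ pointwise displaces nearby points a uniformly bounded \emph{hyperbolic} distance; since $\alpha$ and $\beta$ are hyperbolic isometries, $\varphi(z)$ and $\psi(z)$ remain a bounded hyperbolic distance apart as $z\to J_f$, hence their Euclidean distance tends to $0$, and $\Phi$ is continuous. Rickmann's lemma then shows $\Phi$ is quasiconformal with $D\Phi=D\varphi$ a.e.\ on $K_f$, so the hybrid condition together with conformality of $\psi$ gives $\bar\partial\Phi=0$ a.e., and $\Phi$ is the desired holomorphic equivalence. Note the contrast with your plan: the paper proves that $\psi^{-1}\circ\varphi$ is \emph{boundedly close to} the identity in the hyperbolic metric --- which is true and suffices --- not that it \emph{is} conformal, which is false.
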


\begin{proof}
Let $\varphi:A \rightarrow B$ be a hybrid equivalence between $f$ and $g$, and $\psi: (A_1 \cup A_1')\setminus K_f
\rightarrow (B_1 \cup B_1')\setminus K_g$ an external equivalence between $f$ and $g$. Let $h: \widetilde W' \rightarrow \widetilde W$ be an
external map of $f$ constructed from the Riemann map $\alpha: \widehat \C \setminus K_{f} \rightarrow \widehat\C \setminus
\overline{\D}$. Let $A_f$ be a topological disc compactly contained in $(A_1 \cup A_1') \cap A$ and such that $B_{\varphi}=
\varphi(A_f)$ is compactly contained in $(B_1 \cup B_1')$, and $B_{\psi^o}=\psi(A_f\setminus K_f)$
is compactly contained in $B$. Set $W_{\beta}= \alpha \circ \psi^{-1}( B_{\varphi} \setminus K_g) $.
The map $\beta= \alpha \circ \psi^{-1}: B_{\varphi} \setminus K_g \rightarrow W_{\beta}$ restricts to an external
equivalence between $g$ and $h$.
 
Define $B_{\psi}= B_{\psi^o} \cup K_g$ and the
map $\Phi: A_f  \rightarrow B_{\psi}$ as:
$$ \Phi(z)=\left\{
\begin{array}{cl}
 \varphi &\mbox{on  } K_{f}\\
 \psi &\mbox{on  }  A_f \setminus K_f\\
\end{array}\right.
$$

By construction the map $\Phi: A_f \rightarrow B_{\psi}$ conjugates the
maps 
$f$ and $g$ conformally on $A_f$ and quasiconformally with
$\overline{\partial}\Phi=0$ on $K_{f}$.
We want to prove that the map $\Phi$ is holomorphic. By Rickmann Lemma (see below) $\Phi$ is holomorphic if $\Phi$ is
continuous. Thus we just need to prove that it is continuous.

Define
$W_f=h(h^{-1}(\alpha(A_f\setminus K_f)) \cap \alpha(A_f\setminus K_f))\subset \alpha(A_f\setminus K_f)$ 
and $W'_f=h^{-1}(W_f)$. The
restriction $h:W'_f \rightarrow W_f $ is proper holomorphic and of degree $d$.
The map $\chi:=\beta \circ \varphi \circ \alpha^{-1} : W_f'
\rightarrow W_{\beta}$ is a quasi-conformal homeomorphism (into its image) which autoconjugates
$h$ on 
$\Omega'_{W} \cup \gamma_h\setminus \{\gamma_{h}(0)\}$.

Setting $\tau(z)=1/\bar{z}$, $\widetilde W_f'= W'_f \cup \S^1 \cup \tau(W_f')$, $\widetilde W_{\beta}= W_{\beta} \cup \S^1 \cup \tau(W_{\beta})$,
and applying the
strong
reflection principle with respect to
the unit circle, we obtain a quasiconformal homeomorphism (into its image)
$\widetilde \chi: \widetilde W_f'\rightarrow \widetilde  W_{\beta}$, 
which autoconjugates $h$ on $\widetilde\Omega'_{W}$.
Thus the restriction
$\widetilde \chi: \S^1
\rightarrow \S^1$ is a quasisymmetric autoconjugacy of $h$ on the
unit circle. Since the preimages of the parabolic fixed point $z=1$ are dense in $\S^1$, an
autoconjugacy of $h$ on the
unit circle is the identity. Therefore $\widetilde \chi \mid_{\S^1} = Id $. 

Since the map $\widetilde \chi:\widetilde W_f'\rightarrow \widetilde W_{\beta}$
is a quasiconformal homeomorphism which
coincides with the identity on $\S^1$, the hyperbolic distance between a point near $\S^1$ and its image is uniformly
bounded, i.e. $\exists M>0$ and $r>1$ such that:
$$\forall z\, ,\, 1<|z|<r,\,\,\,d_{W'_{f}} (z, \beta \circ \varphi \circ \alpha^{-1} (z)) \leq M.
$$ 
Since $\alpha$ and $\beta$ are isometries, we obtain 
$$d_{A_f \setminus K_{f}} (\beta^{-1} \circ \alpha(z), \varphi(z)) \leq
M \mbox{         for   
 } z \notin K_{f},\,z \mbox{ in a neighborhood of }K_f.$$ 
Then $\beta^{-1} \circ \alpha (z)$ and $\varphi(z)$ converge to the same value as $z$
converges to $J_{f}$, i.e. $\beta^{-1} \circ \alpha$ extends continuously to $J_f$ by $\beta^{-1} \circ \alpha
(z)=\varphi(z),\,\, z \in J_f$.
 Thus $\Phi$ is continuous. The result follows by the Rickmann lemma (for a proof of the Rickmann lemma we refer to
\cite{DH}, Lemma $2$ pg. 303):

\begin{lemma}\textit{\textbf{Rickmann}}
Let $U\subset \C$ be open, $K\subset U$ be compact, $\varphi: U \rightarrow \C$ and $\Phi: U \rightarrow \C$ be two maps 
which are homeomorphisms onto their images. Suppose that $\varphi$ is quasi-conformal, that $\Phi$ is quasi-conformal on $
U \setminus K$ and that $\Phi=\varphi$ on $K$. Then $\Phi$ is quasiconformal and $D\Phi = D \varphi$ almost everywhere on $K$.

\end{lemma}
\end{proof}
\begin{prop}\label{unic}
 If $P_A=z+1/z+A$ and $P_{A'}=z+1/z+A'$ are hybrid conjugate and $K_A$ is connected, then they are
holomorphically conjugate, i.e. $A^2=(A')^2$.
\end{prop}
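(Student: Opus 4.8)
The plan is to derive the statement from Proposition \ref{hol} together with Proposition \ref{extmap}, and then to read off $A^2=(A')^2$ from a conformal invariant attached to the parabolic fixed point at $\infty$. First, by Proposition \ref{extmap} both $P_A$ and $P_{A'}$ have external class $[h_2]$, hence they are externally equivalent. By hypothesis they are also hybrid conjugate, and $K_A$ is connected; since a hybrid conjugacy carries filled Julia set to filled Julia set, $K_{A'}$ is a quasiconformal image of $K_A$ and is therefore connected as well. Thus the hypotheses of Proposition \ref{hol} are satisfied, and $P_A$ and $P_{A'}$ are holomorphically equivalent: there are parabolic-like restrictions and a biholomorphic map $\varphi:(A\cup A')\to (B\cup B')$ which carries $\gamma_{\pm A}$ to $\gamma_{\pm A'}$ and satisfies $\varphi\circ P_A=P_{A'}\circ\varphi$ on $A'$.

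Next I would localize at the parabolic point. The parabolic fixed point of each map in $Per_1(1)$ is $\infty$, it lies in the interior of the domain of any parabolic-like restriction, and it is the tip $\gamma_A(0)=\infty$ of the dividing arc. Since $\varphi(\gamma_{\pm A})=\gamma_{\pm A'}$ we have $\varphi(\infty)=\infty$, so $\varphi$ is holomorphic on a full two-sided neighbourhood of $\infty$, fixes $\infty$, and (because the conjugacy equation holds on all of $A'$, which contains such a neighbourhood) conjugates $P_A$ to $P_{A'}$ as germs at $\infty$. A conformal conjugacy between two parabolic germs of multiplier $1$ preserves their formal invariant (the \emph{r\'esidu it\'eratif}); it therefore suffices to compute this invariant for $P_A$ and to check that it separates the values $\pm A$.

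Finally I would carry out the computation. In the coordinate $w=1/z$ centred at $\infty$ the map $P_A$ becomes
\begin{equation*}
g_A(w)=\frac{w}{1+Aw+w^2}=w-Aw^2+(A^2-1)w^3+O(w^4),
\end{equation*}
a parabolic germ $w+a_2w^2+a_3w^3+\cdots$ with $a_2=-A$ and $a_3=A^2-1$. Conjugating by $\zeta=-1/(a_2 w)$ brings $g_A$ to the normal form $\zeta\mapsto \zeta+1+c(A)/\zeta+O(1/\zeta^2)$, in which the coefficient
\begin{equation*}
c(A)=1-\frac{a_3}{a_2^2}=1-\frac{A^2-1}{A^2}=\frac{1}{A^2}
\end{equation*}
is the formal invariant. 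Preservation of $c$ under the conformal germ conjugacy $\varphi$ then yields $c(A)=c(A')$, that is $A^2=(A')^2$. Conversely $A^2=(A')^2$ means $A'=\pm A$, and since $z\mapsto -z$ conjugates $P_A$ to $P_{-A}$ on all of $\widehat{\C}$, the maps $P_A$ and $P_{A'}$ are genuinely holomorphically (M\"obius) conjugate.

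The step I expect to be the main obstacle is the localization: one must make sure that the holomorphic equivalence furnished by Proposition \ref{hol} really extends conformally across the parabolic point and conjugates the dynamics on an honest two-sided neighbourhood of $\infty$, rather than merely along a petal or on one side of the dividing arc. This is exactly where the requirements that $\infty$ lie in the interior of the domain and that $\varphi(\gamma_A(0))=\gamma_{A'}(0)$ are used. Once this is secured, the invariance of the r\'esidu it\'eratif and its explicit evaluation as $1/A^2$ are routine.
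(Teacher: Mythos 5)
Your proposal is correct, but it reaches the conclusion by a genuinely different route than the paper. You agree with the paper in the first step: reduce to Proposition \ref{hol} via Proposition \ref{extmap} (both maps have external class $[h_2]$, hence are externally equivalent) together with the observation that the hybrid conjugacy transports connectivity of $K_A$ to $K_{A'}$. The divergence is in how holomorphic equivalence is exploited. The paper globalizes: since $K_A$ and $K_{A'}$ are connected, the external equivalences with $h_2$ extend to holomorphic conjugacies $\alpha$ and $\beta$ on all of $\widehat\C\setminus K_A$ and $\widehat\C\setminus K_{A'}$, and the hybrid conjugacy on $K_A$ is glued with $\beta^{-1}\circ\alpha$ on the complement; the argument in the proof of Proposition \ref{hol} (the hyperbolic-distance estimate plus the Rickmann Lemma) shows the glued map is holomorphic on the whole sphere, hence a M\"obius transformation, and the normalization of $Per_1(1)$ (it must fix $\infty$ and $0$ and permute the critical points $\pm1$) forces it to be $z\mapsto\pm z$. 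You instead localize: you apply Proposition \ref{hol} as a black box, observe that the resulting biholomorphic equivalence conjugates the germs at $\infty$ (legitimate, since the domain of a parabolic-like restriction is an open neighbourhood of the parabolic fixed point and the conjugacy relation holds there, and fixed points with their multipliers are preserved, so $\varphi(\infty)=\infty$), and then invoke the conformal invariance of the r\'esidu it\'eratif; your expansion $g_A(w)=w-Aw^2+(A^2-1)w^3+O(w^4)$ and the value $c=1-a_3/a_2^2=1/A^2$ are both correct. The paper's route buys the explicit global M\"obius conjugacy and the identification $[P_A]=\{P_A,P_{-A}\}$; your route avoids the global gluing and extension arguments entirely, resting instead on a classical local invariant, at the price of concluding only the numerical identity $A^2=(A')^2$ (from which, as you note, genuine conjugacy follows because $z\mapsto -z$ conjugates $P_A$ to $P_{-A}$).

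One point you must patch: the normal-form computation presupposes $a_2=-A\neq 0$, so the invariant $c=1/A^2$ is undefined when $A=0$, which is exactly the parameter where the parabolic multiplicity at $\infty$ jumps to $2$. To cover this, note that the number of attracting petals is itself a conformal germ invariant, so your germ conjugacy forces $A=0$ if and only if $A'=0$, in which case $A^2=(A')^2$ holds trivially; when both are nonzero your residue argument applies verbatim. With this one-sentence addition the proof is complete. You should also cite the invariance of the holomorphic fixed-point index (equivalently of the r\'esidu it\'eratif) under conformal changes of coordinate, e.g.\ from \cite{M} or \cite{Sh}, and clean up the notational clash between the parameters $A,A'$ and the domains $A,A'$ appearing in the definition of holomorphic equivalence.
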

\begin{proof}
 Since $K_A$ and $K_{A'}$ are connected, the external conjugacies between $P_A$ and $P_{A'}$ respectively and $h_2$ can
be extended to the discs $\widehat{\C} \setminus K_A$ and $\widehat{\C} \setminus K_{A'}$ (see Proposition \ref{extmap}), i.e.
there exist holomorphic conjugacies $\alpha:\widehat{\C} \setminus K_A \rightarrow \widehat{\C} \setminus
\overline{\D}$ and $\beta:\widehat{\C} \setminus K_{A'} \rightarrow \widehat{\C} \setminus \overline{\D}$ between $P_A$
and $P_{A'}$ respectively and $h_2$. Therefore $\beta^{-1} \circ \alpha: \widehat{\C} \setminus K_{A'} \rightarrow
\widehat{\C} \setminus  K_{A'}$ is a holomorphic conjugacy between $P_A$ and $P_{A'}$.

Let $(f, U', U, \gamma_f)$ and $(g, V', V, \gamma_g)$ be parabolic-like restrictions of $P_A$ and $P_{A'}$
respectively, and let $\varphi:A \rightarrow B$ be a hybrid equivalence between them. Define the map $\Phi: \widehat{\C}
\rightarrow \widehat{\C}$ as follows:
$$ \Phi(z)=\left\{
\begin{array}{cl}
 \varphi &\mbox{on  } K_{A}\\
 \beta^{-1} \circ \alpha &\mbox{on  }  \widehat{\C} \setminus K_{A}\\
\end{array}\right.
$$

The proof of Proposition \ref{hol} shows that the map $\Phi: \widehat{\C} \rightarrow \widehat{\C}$ is holomorphic, hence it is
a M\"{o}bius transformation. Since $\Phi$ conjugates $P_A$ and $P_{A'}$, it
fixes the parabolic fixed point $z=\infty$ and its preimage $z=0$, and it
can fix or interchange the critical points $z=1$ and $z=-1$. Hence either $\Phi$ is the Identity or $\Phi(z)$ is the map $z \rightarrow -z$.
Therefore $[P_A]= \{ P_A,\,\,P_{-A}\}$, and finally $A^2=(A')^2$.

\end{proof}


\begin{thebibliography}{13}
 \bibitem[Ah]{Ah} L. Ahlfors, Lectures on quasiconformal mappings, Second edition. AMS University
Lecture series, Vol. 38. (2006).
\bibitem[DE]{DE} A. Douady \& C. J. Earle, Conformally natural extension of homeomorphisms of the circle,
\textit{Acta Math.},169 ($1992$), no. 3-4, 229-325. 
\bibitem[DH]{DH} A. Douady \& J. H. Hubbard, On the Dynamics of Polynomial-like Mappings, \textit{Ann. Sci. \'{E}cole
Norm.
Sup.},(4), Vol.$18$ ($1985$), 287-343.
\bibitem[Hu]{Hu} J. Hubbard, \emph{Teichm\"{u}ller Theory, Volume 1: Teichm\"{u}ller Theory}, Matrix Editions, (2006).
\bibitem[L]{LL} L. L. A. Lomonaco, \emph{Parabolic-like mappings. Ph.D.-Thesis}, Roskilde Universitet, (2012).
\bibitem[M]{M} J. Milnor, \emph{Dynamics in One Complex Variable}, Annals of Mathematics Studies,
(2006).
\bibitem [Sh]{Sh} M. Shishikura, Bifurcation of parabolic fixed points, \textit{The Mandelbrot set, Theme and
Variations, } (325-363), \textit{London Math. Soc. Lecture Note Ser., 274} Cambridge Univ. Press, (2000).


\end{thebibliography}
\end{document}